\newtheorem{theorem}[equation]{Theorem}
\newtheorem{proposition}[equation]{Proposition}
\newtheorem{lemma}[equation]{Lemma}
\newtheorem{corollary}[equation]{Corollary}
\newtheorem{conjecture}[equation]{Conjecture}
\newtheorem{maintheorem}{Main Theorem}
\theoremstyle{definition}
\newtheorem{example}[equation]{Example}
\newtheorem{definition}[equation]{Definition}
\theoremstyle{remark}
\newtheorem{remark}[equation]{Remark}
\makeatletter\@addtoreset{equation}{section} \makeatother
\newtheoremstyle{dotless}{}{}{\rm}{}{\sc}{}{ }{}
\theoremstyle{dotless}
\newtheorem{case}[subsection]{Case:}
\author{Arman Sarikyan}
\title{On the Rationality of Fano-Enriques Threefolds}
\address{\emph{Arman Sarikyan}
\newline
\textnormal{School of Mathematics, The University of Edinburgh,  Edinburgh EH9 3JZ, UK.}
\newline
\textnormal{\texttt{a.sarikyan@ed.ac.uk}}}
\renewcommand{\P}{\mathbb{P}}
\renewcommand{\O}{\mathcal{O}}
\newcommand{\Q}{\mathbb{Q}}
\newcommand{\s}{\mathfrak{s}}
\renewcommand{\H}{\mathcal{H}}
\newcommand{\N}{\mathcal{N}}
\newcommand{\M}{\mathcal{M}}
\renewcommand{\L}{\mathcal{L}}
\renewcommand{\l}{\ell}
\DeclareMathOperator{\mult}{mult} 
\DeclareMathOperator{\Aut}{Aut} 
\DeclareMathOperator{\Prym}{Prym}
\DeclareMathOperator{\nef}{nef}
\begin{document}

\begin{abstract}
A Fano-Enriques threefold is a three-dimensional non-Gorenstein Fano variety of index $1$ with at most canonical singularities. We study the birational geometry of Fano-Enriques threefolds with terminal cyclic quotient singularities. We investigate their rationality, and also provide an example of a Fano-Enriques threefold, whose pliability is $9$, i.e. a Fano-Enriques threefold birationally equivalent to exactly $9$ Mori fibre spaces in Sarkisov category.
\end{abstract}

\maketitle

\section{Introduction}
Throughout this paper we work over the field of complex numbers $\mathbb{C}$, and all varieties are assumed to be projective unless stated otherwise. 

Three-dimensional varieties whose hyperplane sections are Enriques surfaces have been first introduced and studied by G. Fano in \cite{fano1938sulle}. He has attempted to classify such varieties, however the provided proof contained inaccuracies due to the lack of a proper theory of three-dimensional birational geometry at that time. Later, the ideas of G.~Fano were summarised and presented in a modern mathematical language by A. Conte and J.~P.~Murre in \cite{conte1985algebraic}. The varieties described above are said to be Fano-Enriques threefolds and their modern definition is the following.

\begin{definition}\label{defin}
	A three-dimensional variety $X$ is called a {\itshape Fano-Enriques threefold} if it has at most canonical singularities, $-K_X$ is not a Cartier divisor and $-K_X \sim_{\Q} H$ for some ample Cartier divisor $H$. The numbers $-K_X^3$ and $g(X)=-\frac{1}{2}K_X^3+1$ are called the {\itshape degree} and {\itshape genus} of $X$ respectively.
\end{definition}

\begin{remark}
	Actually one has $-2K_X\sim 2H$ by the theorems below.
\end{remark}

The connection between the threefolds studied by G. Fano in \cite{fano1938sulle} and Fano-Enriques threefolds is reflected in the following theorems.

\begin{theorem}[{\cite[Proposition 3.3]{prokhorov1995algebraic}}]
	Let $X$ be a Fano-Enriques threefold and $-K_X \sim_{\Q} H$, where $H$ is an ample Cartier divisor. Then a general surface in the linear system $|H|$ is an Enriques surface with at most canonical singularities. It is smooth if the singularities of $X$ are isolated and $-K_X^3\neq 2$.
\end{theorem}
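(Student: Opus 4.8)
The plan is to compute the numerical invariants of $S$ by vanishing theorems and adjunction, deduce from the Enriques--Kodaira classification that $S$ is an Enriques surface, and treat the singularities of a general member by a Bertini-type argument.

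\emph{Cohomological input.} Since $X$ has canonical, hence klt, singularities and $-K_X\sim_{\Q}H$ is ample, Kawamata--Viehweg vanishing gives $H^i(X,\O_X)=0$ and $H^i(X,\O_X(H))=0$ for $i>0$, so $\chi(\O_X)=1$. The class $K_X+H\sim_{\Q}0$ is a torsion element of $\operatorname{Cl}(X)$, and it is nonzero because otherwise $-K_X\sim -H$ would be Cartier; hence $H^0(X,\O_X(K_X+H))=0$, while $(K_X+H)-K_X\sim_{\Q}2H$ is ample, so by Serre duality with the dualizing sheaf $\omega_X=\O_X(K_X)$ together with Kawamata--Viehweg vanishing one gets $H^i(X,\O_X(-H))=0$ for all $i$, hence $\chi(\O_X(-H))=0$. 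A general $S\in|H|$ is an effective Cartier divisor, so the structure sequence $0\to\O_X(-H)\to\O_X\to\O_S\to0$ yields $\chi(\O_S)=1$, $h^0(\O_S)=1$ (so $S$ is connected) and $h^1(\O_S)=h^2(\O_S)=0$. By adjunction for the Cartier divisor $S$ on the Cohen--Macaulay variety $X$ we have $\omega_S\cong\O_X(K_X+H)\big|_S$, which is a torsion class in $\operatorname{Cl}(S)$; in particular $K_S$ is numerically trivial.

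\emph{Singularities of a general member.} The technical core is to show that $|H|$ has no fixed component and that $\operatorname{Bs}|H|$ is finite and contained in the non-Gorenstein locus of $X$; one checks this by a local study of $|H|$ at the singular points, using Reid's Riemann--Roch and the vanishing above. Granting this, a general $S$ is smooth away from a finite set, hence regular in codimension one, and being $S_2$ as a Cartier divisor in a Cohen--Macaulay variety it is normal; then, since $X$ is canonical, a Bertini-type theorem for canonical singularities (inversion of adjunction for the Cartier divisor $S$) shows that $(X,S)$ is canonical near $S$, so that $S$ has at worst Du Val (i.e. canonical) singularities, and $S$ is smooth away from $\operatorname{Sing}(X)\cup\operatorname{Bs}|H|$. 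A useful alternative, which also recovers $-2K_X\sim 2H$, is to pass to the cyclic cover $\pi\colon Y\to X$ trivialising $K_X+H$: then $Y$ is a Gorenstein canonical Fano threefold with $-K_Y\sim\pi^*H$, the map $\pi$ is \'etale in codimension one, and the general elephant theorem of Shokurov--Reid forces the surface lying over $S$ to be a K3 surface with Du Val singularities; this descends to give the same for $S$, and a comparison of Euler characteristics pins the torsion order of $K_X+H$ to $2$.

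\emph{Conclusion and the hard point.} Being normal, connected and with $\chi(\O_S)=1$, the surface $S$ is irreducible and reduced, and its minimal resolution $\widetilde S\to S$ is crepant, so $K_{\widetilde S}\equiv 0$ and $\chi(\O_{\widetilde S})=1$; by the Enriques--Kodaira classification a smooth projective surface with numerically trivial canonical class and $\chi(\O)=1$ is an Enriques surface, so $S$ is an Enriques surface with canonical singularities. If moreover $\operatorname{Sing}(X)$ is isolated, the local analysis shows that $\operatorname{Bs}|H|=\varnothing$ whenever $-K_X^3\neq 2$; then $|H|$ is base-point free, a general $S$ avoids the finite set $\operatorname{Sing}(X)$, and Bertini makes it smooth. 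The value $-K_X^3=2$ is genuinely exceptional: there $|H|$ defines a generically two-to-one morphism (equivalently the induced polarisation on the Enriques section is only a pencil), a base point is forced, and a general $S$ acquires a node at it. I expect the main obstacle to be exactly this middle step --- the explicit local description of $|H|$ at the non-Gorenstein points and the Bertini-type statement that a general member of an ample system on a canonical threefold has only canonical singularities; the cohomological bookkeeping and the appeal to the surface classification are then routine.
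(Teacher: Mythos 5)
This theorem is quoted in the paper with a citation to Prokhorov's work and is not proved there, so there is no in-paper argument to compare yours against; I can only assess the proposal on its own merits.

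Your cohomological bookkeeping and the final appeal to the Enriques--Kodaira classification are sound (one slip: $(K_X+H)-K_X\sim_{\Q}H$, not $2H$; the intended vanishing $H^i(X,\O_X(K_X+H))=0$ for $i>0$ still holds because $H$ is ample and $X$ is klt, and Serre duality then gives $\chi(\O_X(-H))=0$ as you want). The genuine gap is the one you flag yourself: every statement about a general member that the classification step needs --- no fixed component, finite base locus, normality and Du Val singularities of a general $S$, and base-point-freeness when the singularities are isolated and $-K_X^3\neq 2$ --- is introduced with ``granting this'' and never established. This is not routine; it is the actual content of Prokhorov's proposition. Moreover, the shortcut you offer to fill it does not work as stated: on the canonical cover one has $H^0(V,-K_V)\cong H^0(X,\O_X(H))\oplus H^0(X,\O_X(K_X+2H))$, and the second summand is in general nonzero (for the degree-$2$ family of Section 3 it is two-dimensional, spanned by $x_3,x_4$), so $\pi^*|H|$ is a \emph{proper} invariant subsystem of $|-K_V|$ and its general member is not a general elephant of $V$; the Shokurov--Reid theorem therefore gives nothing directly about it. Likewise your assertion that for $-K_X^3=2$ a base point ``is forced'' and the general member ``acquires a node'' is consistent with the explicit $g=2$ family in the paper (where $\mathrm{Bs}|H|$ consists of two smooth points of $X$), but it is asserted rather than derived. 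To close the argument you need the local analysis of $|H|$ at the non-Gorenstein points and a Bertini-type statement for canonical singularities along the finite base locus --- precisely the step you postponed.
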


\begin{theorem}[\cite{Cheltsov_1996}]
	Let $X$ be a normal threefold and $H\subset X$ be an Enriques surface
with at most canonical singularities. Assume that $H$ is an ample Cartier divisor on $X$. Then $-2K_X\sim 2H$ and $X$ is either a Fano-Enriques threefold or a contraction of a section of $\mathrm{Proj}\left(\O_H\oplus\O_H\left(H|_H\right)\right)$.
\end{theorem}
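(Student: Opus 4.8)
The plan is to prove the statement in two stages: first the linear equivalence $-2K_X\sim 2H$, obtained by transporting the torsion relation $2K_H\sim 0$ from $H$ up to $X$ by a Lefschetz‐type argument, and then the dichotomy, obtained by analysing the singularities of $X$ through inversion of adjunction. For the preliminaries, let $\pi\colon\widetilde H\to H$ be the minimal resolution. Since $H$ has only canonical surface singularities these are Du Val, so $\widetilde H$ is a smooth Enriques surface, $\pi$ is crepant, $H$ is Gorenstein with rational singularities, $2K_H\sim 0$, and $H^1(H,\O_H)=H^2(H,\O_H)=0$. Because $H$ is a Cartier divisor and $H$ is Cohen--Macaulay, $X$ is Cohen--Macaulay along $H$; adjunction then gives $\omega_H\cong\omega_X(H)|_H$, and as $\omega_H$ is invertible so is $\omega_X$ in a neighbourhood of $H$, i.e.\ $X$ is Gorenstein near $H$ and $K_H=(K_X+H)|_H$. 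Since $H$ is ample, whatever non-Gorenstein (a fortiori non-Cohen--Macaulay) points $X$ has form a finite set $Z$ disjoint from $H$.

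To prove $-2K_X\sim 2H$, set $\L:=\O_X(2K_X+2H)$, a reflexive sheaf which by the above is invertible near $H$ with $\L|_H\cong\O_H(2K_H)\cong\O_H$. Using that $\pi^*(nH|_H)$ is nef and big while $K_{\widetilde H}\equiv 0$, Kawamata--Viehweg vanishing and Serre duality on $\widetilde H$, pushed down along $\pi$ (whose fibres are rational), yield $H^1\bigl(H,\O_H(nH|_H)\bigr)=0$ for all $n\in\mathbb{Z}$ and $H^0\bigl(H,\O_H(-nH|_H)\bigr)=0$ for $n\ge 1$. Feeding these into the exact sequences $0\to\L(-(n+1)H)\to\L(-nH)\to\O_H(-nH|_H)\to 0$, together with a Serre-type vanishing $H^1\bigl(X,\L(-nH)\bigr)=0$ for $n\gg 0$ (valid since $X$ is Cohen--Macaulay near $H$), gives $H^1\bigl(X,\L(-H)\bigr)=0$, hence the restriction $H^0(X,\L)\to H^0(H,\O_H)=\mathbb{C}$ is surjective. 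A global section of $\L$ restricting to a unit on $H$ is a local generator near $H$, so its associated effective Weil divisor is disjoint from the ample $H$, hence zero; therefore $\L\cong\O_X$, that is $-2K_X\sim 2H$. In particular $2K_X$ is Cartier, $X$ is $\Q$-Gorenstein of index dividing $2$, and $-K_X\sim_\Q H$ is ample.

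For the dichotomy: since $H$ is a Cartier divisor with canonical singularities, inversion of adjunction shows $X$ is canonical in a neighbourhood of $H$, so its only possible worse-than-canonical points lie in $Z\subset X\setminus H$, and $X$ is a $\Q$-Gorenstein Fano threefold. If $X$ is canonical everywhere, then it is a klt Fano variety, hence rationally connected and simply connected, so $H^1(\O_X)=0$ and $\mathrm{Pic}(X)$ is torsion-free; since $-K_X|_H=K_H+H|_H\not\sim H|_H$ we have $-K_X\not\sim H$, and together with $-K_X\sim_\Q H$ and torsion-freeness of $\mathrm{Pic}(X)$ this forces $-K_X$ to be non-Cartier, so $X$ is a Fano-Enriques threefold in the sense of Definition~\ref{defin}. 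If $X$ is not canonical, it has a non-canonical point $p\in Z$; extracting over $p$ a divisor $E$ of minimal discrepancy (via a plt, resp.\ dlt, blow-up, after a $\Q$-factorialization if necessary) gives $\sigma\colon X'\to X$ with $E$ the exceptional prime divisor, $K_{X'}=\sigma^*K_X+a(E)\,E$, $-E$ being $\sigma$-ample, $E$ normal, and, by adjunction, $K_E\equiv\bigl(1+a(E)\bigr)\,E|_E$ with $E|_E$ anti-ample on $E$. As $p\notin H$, the strict transform $\widetilde H$ equals $\sigma^*H$ and is disjoint from $E$, and $-K_{X'}\sim_\Q\widetilde H+(-a(E))E$; tracing through the numerics (and here the ampleness of $H$ together with $-2K_X\sim 2H$ enters decisively) one finds $a(E)=-1$, so $X$ is log canonical but not canonical with $p$ the unique such point, $K_E\equiv 0$ with $\O_E(-E|_E)\cong H|_H$ so that $E$ is a canonical Enriques surface, and $X'\cong\mathbb{P}_H\bigl(\O_H\oplus\O_H(H|_H)\bigr)$ with $E$ and $\widetilde H$ its two sections; then $\sigma$ is precisely the contraction of the section $E$, exhibiting $X$ as a contraction of a section of $\mathrm{Proj}\bigl(\O_H\oplus\O_H(H|_H)\bigr)$.

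I expect the main difficulties to be, first, handling carefully the interplay between Weil and Cartier divisors and the finitely many non-Gorenstein points of $X$ when running the section/Lefschetz argument --- i.e.\ justifying the vanishing $H^1\bigl(X,\L(-H)\bigr)=0$ and the passage from ``a section that is a unit on $H$'' to ``$\L\cong\O_X$'' --- and, second, in the non-canonical case, proving that the extracted variety $X'$ is globally the projective bundle $\mathbb{P}_H\bigl(\O_H\oplus\O_H(H|_H)\bigr)$ and that $E$ is forced to be an Enriques surface rather than a K3, abelian, or bielliptic surface (and that $X$ is log canonical rather than merely $\Q$-Gorenstein), which is where the ampleness of $H$ and the precise form of the relation $-2K_X\sim 2H$ must be fully exploited.
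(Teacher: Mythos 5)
This theorem is quoted by the paper from \cite{Cheltsov_1996} without proof, so there is no in-paper argument to compare against; I can only assess your proposal on its own terms. The first half is essentially correct and complete. The reduction to the crepant resolution of $H$, the observation that $X$ is Cohen--Macaulay and in fact Gorenstein in a neighbourhood of $H$ (Nakayama applied to $\omega_X$ along the Cartier divisor $H$), and the Lefschetz-type argument showing $\O_X(2K_X+2H)\cong\O_X$ all work. One small correction: the vanishing $H^1\bigl(X,\L(-nH)\bigr)=0$ for $n\gg 0$ is not a consequence of $X$ being Cohen--Macaulay near $H$ (Serre vanishing concerns positive twists, and Serre duality would need global control of the dualizing complex); the correct tool is the Enriques--Severi--Zariski lemma in Mumford's form, which applies because $\L$ is reflexive on a normal threefold and hence has depth $\ge 2$ at every closed point. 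The canonical branch of the dichotomy is also fine: $-K_X\not\sim H$ because $K_H\not\sim 0$, and a nontrivial $2$-torsion line bundle on a canonical Fano threefold is excluded, most cheaply by comparing $\chi(\O)$ of $X$ with that of the induced \'etale double cover (both equal $1$ by Kawamata--Viehweg), so $-K_X$ cannot be Cartier and $X$ is Fano--Enriques.

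The genuine gap is the non-canonical case, which is the substantive content of the second alternative, and your sketch asserts rather than proves every hard step. First, ``a divisor of minimal discrepancy over $p$'' does not exist unless $X$ is already known to be log canonical at $p$; ruling out worse-than-lc points is part of what must be shown, not a hypothesis. Second, ``tracing through the numerics one finds $a(E)=-1$'' is not an argument: the adjunction $K_E\equiv(1+a(E))E|_E$ on a single extracted divisor gives one numerical relation with two unknowns ($a(E)$ and the class $E|_E$), and nothing you have written pins down $a(E)$, the normality of $E$, or why $E$ is an Enriques surface rather than a K3, abelian, or bielliptic surface with the same numerical invariants. Third, and most seriously, even granting all of that, the conclusion that $X'\cong\P_H\bigl(\O_H\oplus\O_H(H|_H)\bigr)$ is a global statement about all of $X$, not a local statement at $p$: it requires showing that the graded ring $\bigoplus_n H^0(X,nH)$ coincides with the cone ring $\bigoplus_n H^0(H,nH|_H)$, or equivalently that projection away from $p$ realises $X\setminus\{p\}$ as the total space of the line bundle $\O_H(-H|_H)$ over $H$. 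No mechanism for this appears in your outline. You correctly flag these points as ``the main difficulties,'' but flagging them does not discharge them, and as written the dichotomy --- in particular the exclusion of any third possibility --- is not established.
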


Since $-2K_X\sim 2H$ for all Fano-Enriques threefolds, then one can consider the so-called {\itshape canonical covering} of $X$, i.e. the variety $V=\mathrm{Spec}\left(\O_X\oplus\O_X \left(K_X+H\right)\right)$. It can be proved that $V$ is a Fano variety with Gorenstein canonical singularities and the natural double covering $\pi\colon V \to X$ is ramified at finitely many points, which are exactly the non-Gorenstein points of $X$.

Although the classification of Fano-Enriques threefolds is as yet unknown, there are some partial results. For example, it has been proved by I. Cheltsov in \cite{chel1999bounded} that the genus $g(X)$ is bounded by $47$. However, most likely that this bound is far from being sharp, because if $H$ is a smooth Enriques surface, then $g(X)\leq 17$ and this bound is sharp by \cite[Theorem 1.1]{prokhorov2007fano} or \cite[Theorem 1.5]{knutsen2011extendability}. Also, L. Bayle in \cite{bayle} has classified Fano-Enriques threefolds, whose canonical covering $V$ is smooth (see also \cite{sano1995classification}). He has proved the following theorem. 

\begin{theorem}[{\cite{bayle}}]\label{List}
Let $X$ be a Fano-Enriques threefold and let $V$ be its canonical covering. Assume that $V$ is smooth. Then $X$ has eight singular points that are quotient singularities of type $\frac{1}{2}(1,1,1)$, and $V$ is one of the following:
\begin{enumerate}[\normalfont 1)]
	\item the double covering of a quadric ramified in a divisor of degree $8$, $g(X)=2$;
	\item the complete intersection of three quadrics in $\P^6$, $g(X)=3$;
	\item the blow-up of a smooth hypersurface of degree $4$ in $\P(1^4,2)$ along an elliptic curve cut out by two hypersurfaces of degree one, $g(X)=3$;
	\item $\P^1\times S_2$, $g(X)=4$;
	\item the double covering of $\P^1\times \P^1 \times \P^1$ ramified in a divisor of degree $(2,2,2)$, $g(X)=4$;
	\item the blow-up of a smooth complete intersection of two quadrics in $\P^5$ along an elliptic curve cut out by two hyperplane sections, $g(X)=5$;
	\item the hypersurface of degree $4$ in $\P(1^4,2)$, $g(X)=5$;
	\item the complete intersection of three divisors of degree $(1,1)$ in $\P^3\times \P^3$, $g(X)=6$;
	\item $\P^1\times S_4$, $g(X)=7$;
	\item the divisor of degree $(1,1,1,1)$ in $(\P^1)^4$, $g(X)=7$;
	\item the blow-up of the cone over a smooth quadric surface $\P^1\times \P^1\subset \P^3$ along the disjoint union of the vertex and a smooth elliptic curve on $\P^1\times \P^1$, $g(X)=8$;
	\item the compete intersection of two quadrics in $\P^5$, $g(X)=9$;
	\item $\P^1\times S_6$, $g(X)=10$;
	\item $\P^1\times \P^1 \times \P^1$, $g(X)=13$,
\end{enumerate}
where $S_d$ is a smooth del Pezzo surface of degree $d$. 

Moreover, in each deformation family listed above, there exist a smooth Fano threefold~$V$ and an involution $\sigma\in \Aut(V)$ that fixes finitely many (actually eight) points on $V$, such that the quotient $V/\langle \sigma\rangle$ is a Fano-Enriques threefold.
\end{theorem}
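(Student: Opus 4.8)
The argument runs entirely through the canonical covering $\pi\colon V\to X$ and its Galois involution $\sigma\in\Aut(V)$, so that $X=V/\langle\sigma\rangle$ and $\mathrm{Fix}(\sigma)$ is finite and maps bijectively onto the non-Gorenstein locus of $X$. I would first settle the local structure and the number $8$. If $p\in\mathrm{Fix}(\sigma)$, then $d\sigma_p$ is an involution of $T_pV\cong\mathbb{C}^3$ whose $(+1)$-eigenspace is $T_p\mathrm{Fix}(\sigma)=0$; hence $d\sigma_p=-\mathrm{id}$ and $\pi(p)$ is a cyclic quotient singularity of type $\frac12(1,1,1)$, which is terminal. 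To count these points, apply the holomorphic Lefschetz fixed point formula to $\sigma$ acting on $\O_V$: since $V$ is a smooth Fano threefold, $H^{>0}(V,\O_V)=0$ and $\sigma$ acts trivially on $H^0$, so the left-hand side is $1$, while each fixed point contributes $\bigl(\det(\mathrm{id}-(d\sigma_p)^{-1})\bigr)^{-1}=\bigl(\det(2\,\mathrm{id})\bigr)^{-1}=\tfrac18$; therefore $|\mathrm{Fix}(\sigma)|=8$. Since $X$ is non-Gorenstein there is at least one such point, so the formula is not vacuous, and the first assertion follows.

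Next I would collect the finiteness input and dispose of the converse direction. Because $\pi$ is étale in codimension one, $K_V=\pi^*K_X$, hence $-K_V=\pi^*H$ and $(-K_V)^3=2(-K_X)^3=4\bigl(g(X)-1\bigr)$, so $g(X)=\tfrac14(-K_V)^3+1\le 17$ as $(-K_V)^3\le 64$ for smooth Fano threefolds. A general $S\in|H|$ is a smooth Enriques surface (by Prokhorov's Proposition above, the singularities of $X$ being the eight isolated points; the case $(-K_V)^3=4$, i.e. $g(X)=2$, is treated by hand), polarised by $L:=H|_S$ with $L^2=H^3=2g(X)-2$, and $\pi^{-1}(S)\in|-K_V|$ is its K3 double cover, on which $\sigma$ acts as a fixed-point-free Enriques involution. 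Thus the classification reduces to listing the smooth Fano threefolds $V$ admitting an involution with zero-dimensional fixed locus — equivalently, via the extension theory of polarised Enriques surfaces (the source of the cited bound), the extendable pairs $(S,L)$ with $L^2=2g-2\le 32$ and their threefold extensions. Conversely, given such a pair $(V,\sigma)$, the quotient $X=V/\langle\sigma\rangle$ is automatically a Fano-Enriques threefold: it has exactly the eight $\frac12(1,1,1)$-points, so $-K_X$ is not Cartier, while $-K_V$, after twisting its canonical linearisation by the sign character (legitimate since $\sigma^2=\mathrm{id}$), descends to an ample Cartier divisor $H$ with $\pi^*H=-K_V$, so $-K_X\sim_{\Q}H$. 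Hence the ``Moreover'' part reduces to exhibiting one $(V,\sigma)$ in each of the fourteen families.

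For the classification itself I would run through the Iskovskikh--Mori--Mukai list of smooth Fano threefolds, organised by the Picard number $\rho(V)$. The relation $(-K_V)^3=4(g(X)-1)$ and the requirement that a general member of the family admit an involution with zero-dimensional fixed locus are strong constraints: for instance no involution of $\P^n$ with $n\ge2$, nor of a smooth quadric threefold, has finite fixed locus, since its fixed locus is cut out by proper linear subspaces, so $\P^3$ and the quadric drop out. For $\rho(V)=1$ this is then a short check against Iskovskikh's tables, leaving the low-genus entries of the list. For $\rho(V)\ge2$ I would use the explicit Mori--Mukai descriptions together with a $\sigma$-equivariant minimal model program — the extremal contractions can be taken $\sigma$-equivariant after passing to $\sigma$-invariant faces — which presents $V$ via projective and conic bundles, del Pezzo fibrations and blow-ups; imposing a zero-dimensional fixed locus then forces an involution with two fixed points on each $\P^1$-factor or fibre and an involution with finitely many fixed points on each del Pezzo base, and cross-checking with the genus singles out precisely the product families $\P^1\times S_d$ and the remaining entries. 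I expect this case-by-case analysis to be the main obstacle, both in length and in its delicate points: computing automorphism groups of non-rigid families, excluding involutions with positive-dimensional or wrong-cardinality fixed locus, and checking in each borderline case that the quotient acquires neither non-isolated nor non-terminal singularities, nor becomes Gorenstein.

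Finally, for the ``Moreover'' statement I would produce, in each of the fourteen families, one smooth $V$ with a suitable $\sigma$; the second paragraph then guarantees that $X=V/\langle\sigma\rangle$ is a Fano-Enriques threefold, of genus $g(X)=\tfrac14(-K_V)^3+1$. For $\P^1\times S_d$ with $d\in\{2,4,6\}$ take a nontrivial involution of $\P^1$ (necessarily with two fixed points) times an involution of $S_d$ with four isolated fixed points; for the families realised inside a product of (weighted) projective spaces take a diagonal involution $\mathrm{diag}(\pm1,\dots)$ of the ambient variety preserving a general member and meeting it in eight reduced points. Here one uses the elementary observation that if a $\sigma$-invariant smooth threefold meets the fixed locus of the ambient involution in a zero-dimensional scheme, then $\sigma$ acts as $-\mathrm{id}$ on the tangent space at each of those points, so the eight points automatically have type $\frac12(1,1,1)$. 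Matching the genera against the list over the fourteen explicit descriptions then completes the proof.
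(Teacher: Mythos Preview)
The paper does not prove this theorem: it is stated as a citation of Bayle's classification \cite{bayle} (see also Sano \cite{sano1995classification}) and is used only as input for the rest of the paper. There is therefore no ``paper's own proof'' to compare your proposal against.

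That said, your outline is broadly faithful to how the cited result is actually established. The count of eight fixed points via the holomorphic Lefschetz formula, the identification of each quotient singularity as $\tfrac12(1,1,1)$ from $d\sigma_p=-\mathrm{id}$, the relation $(-K_V)^3=2(-K_X)^3$ coming from $\pi$ being \'etale in codimension one, and the reduction to a case analysis over the Iskovskikh--Mori--Mukai list of smooth Fano threefolds admitting an involution with zero-dimensional fixed locus --- these are precisely the ingredients in Bayle's argument. You are also right that the bulk of the work lies in the $\rho(V)\ge 2$ case analysis, where one runs an equivariant MMP and tracks the possible involutions through the explicit Mori--Mukai descriptions; this is lengthy and is where the cited paper spends most of its effort. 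Your sketch is honest about this being the main obstacle rather than pretending it is routine, which is appropriate for a result of this size.
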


\begin{remark}
	By \cite{zbMATH01382698} all Fano-Enriques threefolds with at most terminal singularities admit a $\Q$-smoothing, i.e. any such variety is a deformation of one from Theorem \ref{List}.
\end{remark}

 The rationality of Fano-Enriques threefolds is also an open problem that goes back to the works of G. Fano and F. Enriques. The following classical result about it is due to L.~P. Botta and A. Verra.
 
\begin{theorem}[{\cite{botta1983non}}]\label{botta}
Let $X$ be a general Fano-Enriques threefold, whose canonical covering belongs to the deformation family $5)$ from the Theorem \ref{List}. Then $X$ is not rational.
\end{theorem}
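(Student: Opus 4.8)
\emph{Proof strategy.} The plan is to realise $X$ as a conic bundle, identify its intermediate Jacobian with a Prym variety, and invoke the Clemens--Griffiths criterion. By Theorem~\ref{List} we may take the canonical covering $V$ to be the double cover $f\colon V\to(\P^1)^3$ branched in a general member $B\in|\O_{(\P^1)^3}(2,2,2)|$, with $X=V/\langle\sigma\rangle$, where the Enriques involution $\sigma$ covers the involution $\iota\colon[a_i{:}b_i]\mapsto[a_i{:}-b_i]$ of each factor; recall that $B$ is a K3 surface, that $\iota|_B$ is fixed-point-free, and that the eight fixed points of $\sigma$ lie over the eight $\tfrac12(1,1,1)$-points of $X$.

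Composing $f$ with a coordinate projection $(\P^1)^3\to\P^1\times\P^1$ presents $V$ as a conic bundle whose fibre over $(p,q)$ is the double cover of the line $\{p\}\times\{q\}\times\P^1$ branched in the length-two scheme cut out by $B$; for general $B$ this is a smooth $\P^1$ away from a smooth discriminant curve $\Delta\in|\O(4,4)|$ of genus $9$, over which the fibre degenerates to a pair of distinct lines. I would check by a monodromy argument that the associated double cover $\widetilde\Delta\to\Delta$ — recording the choice of a line in each degenerate fibre — is connected and étale. This structure is $\sigma$-equivariant, and dividing by $\langle\sigma\rangle$ gives a conic bundle $X\to T:=(\P^1\times\P^1)/\iota'$, where $\iota'$ is the induced involution, fixed-point-free on $\Delta$ for general $B$; its discriminant is $\overline\Delta:=\Delta/\iota'$, a smooth curve of genus $5$, which carries the connected étale double cover $\widetilde{\overline\Delta}:=\widetilde\Delta/\iota'\to\overline\Delta$ (étale because $\iota'$ has no fixed point on $\Delta$). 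Resolving the eight $\tfrac12(1,1,1)$-points of $X$ and the four $A_1$-points of $T$ and carrying out the standard elementary transformations, I would arrive at a standard conic bundle $\pi\colon\widehat X\to\widehat T$ over a smooth rational surface, with $\widehat X$ a smooth projective model of $X$ whose discriminant is $\overline\Delta$ together with finitely many rational curves over which the double cover is trivial. By Beauville's theorem on conic bundles, $J(\widehat X)\cong\Prym(\widetilde{\overline\Delta}/\overline\Delta)$ as principally polarised abelian varieties, a ppav of dimension $g(\overline\Delta)-1=4$; one can independently confirm $\dim J(\widehat X)=h^{2,1}(V)^{\sigma}=4$ by a Hodge computation on the Enriques surface $B/\iota$, which also shows that the exceptional $\P^2$'s over the $\tfrac12(1,1,1)$-points contribute nothing to $H^3(\widehat X)$.

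It then suffices to prove that, for general $B$, the $4$-dimensional ppav $\Prym(\widetilde{\overline\Delta}/\overline\Delta)$ is not isomorphic to a product of Jacobians of smooth curves: since $\widehat X$ is a smooth projective model of $X$, the Clemens--Griffiths criterion forbids rationality. I see two ways to do this. The first is Shokurov's non-rationality criterion for standard conic bundles: after contracting down to a minimal rational base $S$, the discriminant descends from the class $|{-}2K_T|$, so $|2K_S+\overline\Delta_S|\neq\emptyset$, which forces the Prym variety to be non-degenerate (not a sum of Jacobians). The second is a parameter count: the $\iota$-invariant members of $|\O_{(\P^1)^3}(2,2,2)|$ form a $\P^{13}$, and modulo the $3$-dimensional centraliser of $\iota$ in $\Aut\bigl((\P^1)^3\bigr)$ this yields a $10$-dimensional family of threefolds $X$; since $\dim\mathcal A_4=10$ as well, once the associated Prym (period) map to $\mathcal A_4$ is known to be dominant, a general $J(\widehat X)$ is a general ppav of dimension $4$, hence neither a Jacobian ($\dim\mathcal M_4=9$) nor a nontrivial product of Jacobians (all such loci have dimension at most $9$).

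I expect the main obstacle to be the same for either route: genuinely reducing $X$ to a \emph{standard} conic bundle over a \emph{minimal} rational surface while keeping track of the discriminant curve — and of the triviality of the boundary double cover on the extra rational components — through the resolutions of the eight $\tfrac12(1,1,1)$- and four $A_1$-singularities and the subsequent contractions; together with the connectedness of $\widetilde\Delta\to\Delta$ and either the positivity input needed for Shokurov's theorem or the dominance of the Prym map. This bookkeeping, rather than any single deep ingredient, is where the real work lies.
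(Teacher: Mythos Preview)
The paper does not prove this statement: it is quoted from \cite{botta1983non}, and the only elaboration is the remark immediately following, which records that Botta and Verra worked with the singular sextic
\[
x_1x_2x_3x_4\Bigl(x_0^2+x_0\sum_i a_ix_i+\sum_{i,j} b_{ij}x_ix_j\Bigr)+c_1x_2^2x_3^2x_4^2+c_2x_1^2x_3^2x_4^2+c_3x_1^2x_2^2x_4^2+c_4x_1^2x_2^2x_3^2=0
\]
in $\P^4$ and showed this model to be irrational, its normalisation being the Fano--Enriques threefold of type~5). There is thus no in-paper argument to compare against; you are proposing an independent proof via a different birational model.

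Your conic-bundle route is precisely the template the paper itself uses in Section~2 for family~2), and your numerics are correct: $\Delta\in|\O(4,4)|$ has genus~$9$; for general $B$ the four $\iota'$-fixed points lie off $\Delta$, so $\overline\Delta=\Delta/\iota'$ is smooth of genus~$5$; the invariant $(2,2,2)$-forms span a $\P^{13}$; and the moduli count gives $10=\dim\mathcal A_4$. One can in fact sharpen your first endpoint. The quotient $T=(\P^1\times\P^1)/\iota'$ is a Gorenstein del Pezzo surface of degree~$4$ with four $A_1$-points, hence $T\subset\P^4$ is a complete intersection of two quadrics, and $\overline\Delta\in|{-}2K_T|=|\O_T(2)|$ is then a smooth complete intersection of three quadrics in $\P^4$, i.e.\ a canonically embedded genus-$5$ curve. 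Such a curve is never hyperelliptic, and by the Enriques--Petri theorem it is never trigonal; it is smooth, hence not quasi-trigonal; and the plane-quintic case is excluded by genus. So Theorem~\ref{theorem:prym} would apply directly once the conic bundle is standard. By contrast your phrasing via $|2K_S+\overline\Delta_S|\ne\varnothing$ is fragile: on $\widehat T$ one has only $2K_{\widehat T}+\overline\Delta\sim 0$, and contracting any $(-1)$-curve makes this class anti-effective. Your second endpoint --- dominance of the Prym map onto $\mathcal A_4$ --- matches on dimensions but is itself a theorem you have not supplied.

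The genuine gap is the one you already flag. Producing a \emph{standard} conic bundle from $X\to T$, and certifying that its discriminant is $\overline\Delta$ together with rational components carrying the trivial double cover, requires tracking the geometry through the Kawamata blow-ups of the eight $\tfrac12(1,1,1)$-points on $X$ and the resolution of the four $A_1$-points on $T$. Section~2.3 carries out exactly this bookkeeping for family~2) --- normal-bundle computations, Atiyah flops over each singular point, base-point-freeness checks --- and nothing suggests the present case is shorter. You also need the double cover $\widetilde{\overline\Delta}\to\overline\Delta$ to be nontrivial (a trivial cover gives $\Prym\cong J(\overline\Delta)$ and no obstruction); ``a monodromy argument'' is a placeholder, not a proof. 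In sum: the strategy is sound and parallel to the paper's own methods elsewhere, but as written it is a programme rather than a finished argument.
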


\begin{remark} More precisely, L. P. Botta and A. Verra have considered a threefold $Y\subset \P^4$ defined by
	\[
	x_1x_2x_3x_4\left( x_0^2 + x_0\sum_{i=1}^{4}a_ix_i +\sum_{i,j=1}^4b_{ij}x_ix_j\right) + c_1x_2^2x_3^2x_4^2 + c_2x_1^2x_3^2x_4^2 + c_3x_1^2x_2^2x_4^2 +c_4x_1^2x_2^2x_3^2=0,
	\]
	where $a_i$, $b_{ij}$, $c_{i}$ are sufficiently general complex numbers, and proved that $Y$ is not rational. However, one can see that the normalisation of $Y$ is the Fano-Enriques threefold corresponding to the deformation family $5)$ from the Bayle's list.
\end{remark}

\begin{theorem}[{\cite{cheltsov1997rationality}}]
	Let $X$ be a Fano-Enriques threefold. If $g(X)\geq 6$, then $X$ is rational.
\end{theorem}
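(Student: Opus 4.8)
The plan is to convert the hypothesis into a statement about the degree and then to construct a birational model of $X$ that is manifestly rational. Since $-K_X\sim_{\Q}H$ we have $-K_X^3=H^3$, and since $-2K_X\sim 2H$ with $H$ Cartier this is a positive integer; restricting to a general surface $S\in|H|$ --- an Enriques surface with at most canonical singularities by \cite[Proposition~3.3]{prokhorov1995algebraic} --- gives $H^3=(H|_S)^2$, a self-intersection number on the minimal resolution of an Enriques surface, hence even. Thus $g(X)\geq 6$ is equivalent to $-K_X^3=H^3\geq 10$, and since $g(X)\leq 47$ by \cite{chel1999bounded} only finitely many genera occur. It is worth noting that the threshold $10$ is already visible on $S$: a short lattice-theoretic argument on the Enriques lattice $U\oplus E_8(-1)$ shows that $(H|_S)^2\geq 10$ is exactly the condition guaranteeing that $S$ carries an elliptic pencil whose members have $H$-degree at least $6$, whereas for $(H|_S)^2=8$ this may fail.

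To build the rational model I would proceed along one (or a combination) of the following lines. First, the Iskovskikh--Prokhorov method: find on $X$ a line or a conic through a general point, avoiding the non-Gorenstein points, and (double-)project from it via the linear systems $|H-C|$ and $|2H-3C|$. Because $H^3$ is large, after the projection the general anticanonical surface section is still an Enriques surface of large degree, and one argues that the resulting Mori fibre space --- obtained after a relative minimal model program --- is of an unconditionally rational type: $\P^3$ or a quadric, a conic bundle over a rational surface whose discriminant curve has degree bounded in terms of $H^3$, or a del Pezzo fibration over $\P^1$ of degree $\geq 5$. Second, and in parallel, one may pass to the canonical covering $\pi\colon V\to X$: here $V$ is a Gorenstein canonical Fano threefold with $-K_V^3=2H^3\geq 20$ carrying the involution $\sigma$ with $X=V/\langle\sigma\rangle$, so $V$ is a ``large'' Fano threefold which, given the additional $\sigma$-symmetry, one shows is rational; the point is then to choose the rational parametrisation $\sigma$-equivariantly --- say, by equivariant projection from a $\sigma$-invariant line --- and to descend it to $X$. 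The decisive feature, common to both approaches, is that the size of $H^3$ leaves no room for a birationally rigid Mori fibre space to appear; this is essentially the content of the theorem, and the reason the analogous statement fails for ordinary Fano threefolds of genus $\geq 6$ (e.g. for the Fano threefold $V_{14}$) but holds here.

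The main obstacle is technical and lies in the singularities. One must (i) show that a line, conic, or auxiliary curve of the required degree exists, can be chosen away from the $\tfrac12(1,1,1)$ points and compatibly with the remaining canonical points, and moves in a family of the expected dimension; (ii) control the singularities produced by the blow-ups and by each step of the relative minimal model program, so that they remain in the classes for which the rationality criteria for conic bundles and del Pezzo fibrations are available; and (iii) dispose of the finitely many small genera $g\in\{6,7,8,9,10\}$, where the generic construction can degenerate. For (iii) I would invoke Bayle's classification, Theorem~\ref{List}, together with the $\Q$-smoothing remark \cite{zbMATH01382698}: this reduces matters to the finitely many deformation families of canonical coverings $V$, and for each of them rationality of $X$ is checked directly --- for instance by projecting $V$ from a line and pushing the resulting rational parametrisation down to $X=V/\langle\sigma\rangle$.
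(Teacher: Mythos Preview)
The paper does not prove this theorem; it is quoted from \cite{cheltsov1997rationality} as background in the introduction, with no argument given. There is therefore no proof in the paper to compare your proposal against.

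Evaluating your proposal on its own terms: it is an outline of possible strategies rather than a proof, and the one concrete reduction you propose contains a genuine gap. In step (iii) you want to dispose of the small genera by invoking Bayle's classification (Theorem~\ref{List}) and the $\Q$-smoothing remark \cite{zbMATH01382698}. Neither tool applies in the generality required. Bayle's list covers only Fano--Enriques threefolds whose canonical cover $V$ is \emph{smooth}, and the $\Q$-smoothing result applies only when $X$ has \emph{terminal} singularities, whereas the theorem is stated for arbitrary canonical singularities. More fundamentally, even where $\Q$-smoothing is available, rationality is not a deformation invariant --- not even for smooth projective families --- so knowing that $X$ deforms to a rational variety does not establish that $X$ itself is rational. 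This step therefore fails as written.

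The remainder of the sketch (projection from a line or conic, relative MMP, equivariant descent from the cover) lists reasonable ingredients but does not single out a mechanism that actually works, and you explicitly flag the existence of the auxiliary curves and the control of singularities through the MMP as unresolved. Those are not technicalities to be filled in later; they are the entire content of the theorem. As it stands the proposal does not identify what is special about the Enriques geometry that makes the threshold $g\ge 6$ decisive here while the analogous statement fails for ordinary Gorenstein Fano threefolds.
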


In the case when $V$ is smooth, more can be said regarding the rationality of $X$.

\begin{theorem}[{\cite[Corollary 18]{cheltsov2004rationality}}]
	Let $X$ be a Fano-Enriques threefold. Assume that $X$ is not rational and its canonical covering $V$ is smooth. Then $V$ belongs to the deformation families  $1)$, $2)$ or $5)$ from Theorem \ref{List}.
\end{theorem}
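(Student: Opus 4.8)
The plan is to prove the contrapositive: if the smooth canonical covering $V$ of $X$ lies in any family of Theorem~\ref{List} other than $1)$, $2)$ or $5)$, then $X$ is rational. Families $8)$--$14)$ are immediate, since there $g(X)\geq 6$ and $X$ is rational by \cite{cheltsov1997rationality}. Hence the content of the statement is concentrated in the four families $3)$, $4)$, $6)$ and $7)$, which have $g(X)\in\{3,4,5\}$; note that genus $3$ and genus $4$ occur both here and in the non-rational list ($2)$ and $5)$), so the dichotomy cannot be read off numerically and one must use the explicit geometry of the pair $(V,\sigma)$, where $\sigma$ is the Bayle involution and $X=V/\langle\sigma\rangle$.

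For families $4)$ and $7)$ I would exhibit directly a birational model of $X$ that is manifestly rational. In family $4)$, $V=\P^1\times S_2$ with $S_2$ a del Pezzo surface of degree $2$, and the Bayle involution has the form $\sigma=(\tau,\beta)$ with $\tau$ an involution of $\P^1$ with two fixed points and $\beta$ an involution of $S_2$ with exactly four fixed points (on the model $S_2=\{w^2=f_4(x,y,z)\}\subset\P(1,1,1,2)$ one may take $\beta\colon((x,y,z),w)\mapsto((-x,-y,z),-w)$, with fixed locus the length-$4$ scheme $\{z=w=0\}\cap S_2$), so that $\sigma$ has $2\cdot4=8$ isolated fixed points; projecting off the $\P^1$-factor exhibits $X$, birationally, as a $\P^1$-bundle over the rational surface $S_2/\langle\beta\rangle$, hence $X$ is rational. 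In family $7)$, $V=\{y^2=f_4(x_1,\dots,x_4)\}\subset\P(1,1,1,1,2)$ with $f_4$ invariant under $(x_3,x_4)\mapsto(-x_3,-x_4)$, and $\sigma\colon(x_1,x_2,x_3,x_4,y)\mapsto(x_1,x_2,-x_3,-x_4,-y)$, which one checks has exactly $8$ isolated fixed points; the projection $[x_1:\dots:x_4:y]\mapsto[x_1:x_2]$ is $\sigma$-equivariant, trivial on the base $\P^1$, and restricts on the genus-$1$ fibre $\{y^2=q_4(x_3,x_4)\}\subset\P(1,1,2)$ to the hyperelliptic involution $y\mapsto-y$; thus the quotient $X\to\P^1$ has general fibre $\cong\P^1$ with a rational point $[x_3:x_4]=[1:0]$ over $\mathbb{C}(\P^1)$, so $X$ is birational to $\P^1\times\P^1$ and is rational.

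Families $3)$ and $6)$ are the hard part. There $V$ is the blow-up, along an elliptic curve $E$ cut out by two sections of degree one, of the quartic $Y_4\subset\P(1,1,1,1,2)$, respectively of the complete intersection $W$ of two quadrics in $\P^5$; since $Y_4$ is a quartic double solid and hence non-rational in general, nothing is automatic. The plan is to show that $E$ may be taken $\sigma$-invariant and disjoint from the eight fixed points of $\sigma$, and that $(V,\sigma)$ is then, $\sigma$-equivariantly, the blow-up of the canonical covering of family $7)$, respectively of family $12)$ (namely $W$ itself, with $g(X)=9$, already covered by \cite{cheltsov1997rationality}); as contracting the exceptional divisor commutes with passing to the quotient, $X$ becomes birational to the Fano-Enriques threefold of family $7)$, respectively $12)$, and is therefore rational. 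The main obstacle is exactly this identification: one must pin down the Bayle involution and the position of $E$ in families $3)$ and $6)$ precisely enough to recognise the equivariant blow-up, and one must check that the conclusion survives for \emph{every} member of these families, not just the general one, since rationality is not deformation-invariant. A more robust alternative for $3)$ and $6)$ is to bring $X$ into the shape of a standard conic bundle over $\P^2$ or over an $\mathbb{F}_n$ and apply the classical rationality criterion of Iskovskikh once the discriminant curve is shown to be small enough or empty.
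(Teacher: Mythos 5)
The paper itself offers no proof of this statement: it is quoted as \cite[Corollary 18]{cheltsov2004rationality}, so there is nothing internal to compare your argument against and I can only judge it on its own terms. Your skeleton is the natural one (contrapositive; families $8)$--$14)$ have $g(X)\geq 6$ and are rational by \cite{cheltsov1997rationality}; treat $3)$, $4)$, $6)$, $7)$ individually), and family $4)$ is essentially right, provided you add the one observation that actually makes it work: the $\tau$-fixed fibres $\{0\}\times S_2$ and $\{\infty\}\times S_2$ descend to rational sections of $X\to S_2/\langle\beta\rangle$, so the generic fibre is a conic with a rational point over the rational field $\mathbb{C}(S_2/\langle\beta\rangle)$. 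Without a section, ``$\P^1$-fibration over a rational surface'' does not imply rationality --- that is precisely why the Prym machinery of Section 2 is needed elsewhere in the paper.

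The genuine gap is family $7)$, and with it your reduction of family $3)$. On $V=\{y^2=f_4\}\subset\P(1^4,2)$ the fibres of $[x_1:\cdots:x_4:y]\mapsto[x_1:x_2]$ are two-dimensional (degree-$2$ del Pezzo surfaces $\{y^2=f_4(x_1,\lambda x_1,x_3,x_4)\}\subset\P(1,1,1,2)$), not genus-one curves; the curve $\{y^2=q_4(x_3,x_4)\}\subset\P(1,1,2)$ you exhibit is the single locus $\{x_1=x_2=0\}\cap V$, on which $\sigma$ is indeed hyperelliptic, but it is not a general fibre of anything. The only $\sigma$-equivariant curve fibration in sight is $V\dashrightarrow\P^1\times\P^1$, $([x_1:x_2],[x_3:x_4])$, whose general fibre is $\{y^2=ax_1^4+bx_1^2x_3^2+cx_3^4\}\subset\P(1,1,2)$ with $\sigma$ acting by $(x_1,x_3,y)\mapsto(x_1,-x_3,-y)$; one checks this action is fixed-point free on a general fibre (a fixed point with scaling $+1$ forces $x_3=y=0$ and hence $a=0$, with scaling $-1$ it forces $x_1=y=0$ and hence $c=0$), so the quotient fibration on $X$ has genus-one fibres, not conics, and no rationality follows. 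Family $7)$ therefore requires a different construction, and family $3)$, which you propose to reduce to $7)$ by an equivariant blow-down, inherits the problem. For $3)$ and $6)$ you in any case only state a programme: the equivariance of the blow-down does follow from the uniqueness of the divisorial extremal contraction, but you must still rule out fixed points of $\sigma$ on the exceptional divisor and verify that the induced involution downstairs has isolated fixed locus, and none of this is carried out. As written, the proposal establishes only the easy portion of the statement.
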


In the same paper I. Cheltsov stated the following conjecture.

\begin{conjecture}[{\cite[Conjecture 19]{cheltsov2004rationality}}]
	Let $X$ be a Fano-Enriques threefold such that its canonical covering $V$ is smooth. Let $V$ be one of the following:
	\begin{itemize}
		\item[\textup{i)}] the double covering of a quadric ramified in a divisor of degree $8$;
		\item[\textup{ii)}] the complete intersection of three quadrics in $\P^6$;
		\item[\textup{iii)}] the double covering of $\P^1\times \P^1 \times \P^1$ ramified in a divisor of degree $(2,2,2)$.
	\end{itemize}
	Then $X$ is not rational.
\end{conjecture}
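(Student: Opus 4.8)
The three cases are precisely the Fano-Enriques threefolds of genus $2$, $3$ and $4$ with smooth canonical covering, and by the theorems quoted above they are the only remaining candidates for non-rationality: for $g(X)\geq 6$ the threefold is rational, while smoothness of $V$ together with non-rationality forces $V$ into families $1)$, $2)$ or $5)$; in case iii), that of family $5)$, the general member is already excluded by Theorem \ref{botta}. The plan has three parts: (a) produce a convenient birational model of $X$ on which the intermediate Jacobian can be computed; (b) obstruct rationality of the general member via the Clemens-Griffiths criterion; (c) upgrade to a deformation-invariant obstruction in order to reach every member.

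For (a) I would use the canonical covering $\pi\colon V\to X$ and the presentation $X=V/\langle\sigma\rangle$, where $\sigma$ is the Enriques involution with its eight fixed points. After a $\sigma$-equivariant resolution one has that $H^{3}(\widetilde X,\mathbb{Q})$ is the $\sigma$-invariant part of $H^{3}(\widetilde V,\mathbb{Q})$, and since the Hodge structures $H^{3}(V)$ of the smooth Fano threefolds in families $1)$, $2)$, $5)$ are classically known, the intermediate Jacobian $J(\widetilde X)$ becomes accessible once the $\sigma$-action is pinned down. Concretely, in each case $V$ carries a fibration inherited from the ambient space --- the projection $(\P^1)^3\to\P^1\times\P^1$ in case iii), and suitable linear projections in cases i) and ii) --- which $\sigma$ respects; descending to the quotient of the base and flattening presents a birational model of $X$ as a conic bundle $\widetilde X\to S$ over a smooth rational surface $S$, with discriminant curve $\Gamma\subset S$ and a connected double cover $\widetilde\Gamma\to\Gamma$, and in case iii) this recovers the conic bundle of Botta and Verra. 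Setting this up cleanly in cases i) and ii) --- identifying $S$, $\Gamma$ and the double cover, and controlling the degenerate fibres created both by the quotient by $\sigma$ and by the resolution of the eight $\tfrac12(1,1,1)$-points --- is already substantial, since these two families lie outside the scope of Theorem \ref{botta}.

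For (b), Beauville's theory gives an isomorphism $J(\widetilde X)\cong\Prym(\widetilde\Gamma/\Gamma)$ of principally polarised abelian varieties, so that rationality of $\widetilde X$ would force this Prym to be a product of Jacobians of smooth curves. Computing $g(\Gamma)$ and the geometry of $\widetilde\Gamma\to\Gamma$, and invoking the criteria of Mumford, Beauville and Shokurov for a Prym variety not to be a Jacobian, then yields non-rationality of the general member of each family. Part (c) is the main obstacle: the Clemens-Griffiths/Prym obstruction holds only on a dense open subset of each family, whereas the conjecture asserts non-rationality of \emph{every} member. To close this gap one must either describe the family of pairs $(\Gamma,\widetilde\Gamma\to\Gamma)$ precisely enough to verify that it never meets the Jacobian or decomposable locus inside $\mathcal{R}_{g(\Gamma)}$, or pass to a deformation-invariant refinement. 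For the latter the natural candidate is the unramified cohomology, in particular the $2$-torsion of $H^{3}(\widetilde X,\mathbb{Z})$: the class $K_X+H$ is a non-zero $2$-torsion element of $\mathrm{Cl}(X)$ --- responsible for the non-Gorenstein points and for the double cover $\pi$ --- and one would test whether it survives as a non-zero Brauer class on $\widetilde X$, in which case the Artin-Mumford theorem would rule out stable rationality uniformly in each family, and in particular remove the word ``general'' from Theorem \ref{botta}. Determining whether this class persists after resolving the eight singular points, and carrying out the corresponding analysis in the genus $2$ and genus $3$ cases, is where the genuine difficulty lies.
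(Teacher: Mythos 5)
Your plan for case ii) is essentially the paper's: there one blows up a non-Gorenstein point $q_1$ of $X$, uses the hyperplane sections through $q_1$ to get a (non-standard) conic bundle over $\P^2$, performs Atiyah flops in the curves lying over the other seven singular points to arrive at a standard conic bundle over a del Pezzo surface $B$ of degree $2$ with discriminant $\Delta\sim-3K_B$, and then checks via the Shokurov--Beauville criterion that $\Prym(\widetilde\Delta,\Delta)$ is not a sum of Jacobians (by showing $\Delta$ is neither hyperelliptic, trigonal nor quasi-trigonal). Case iii) is, as you say, Botta--Verra for the general member.

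The genuine problem is case i). There is no conic bundle model of $X$ there: the paper proves, via the Noether--Fano inequality, Pukhlikov's $4/\lambda^2$ inequality at smooth points, an exclusion of curves as non-canonical centres, and Okada's $K^3_{3/2}$-condition for del Pezzo fibrations of degree $1$, that the only Mori fibre spaces birational to $X$ are $X$ itself and eight del Pezzo fibrations of degree $1$ over $\P^1$, each obtained by a Kawamata blow-up of one singular point followed by two Atiyah flops. In particular $X$ is not birational to \emph{any} conic bundle in case i), so the intermediate-Jacobian/Prym route you propose cannot even be set up; note that $g(X)=2$ here, so $h^0(X,\O_X(H))=3$ and $|H|$ only maps to $\P^2$ with fibres of degree $H^3=2$ riddled with base locus, and this structure does not survive as a standard conic bundle. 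On the other hand, the rigidity statement gives non-rationality of \emph{every} member of family i), not just the general one. Finally, your part (c) --- removing the word ``general'' in cases ii) and iii) via unramified cohomology or an Artin--Mumford class --- is not carried out in the paper either; its corollary asserts non-rationality only for a general member in those two cases. So you have correctly flagged an open difficulty, but as written your proposal neither closes that gap nor supplies a working argument for case i), and hence does not prove the conjecture.
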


We know from Theorem \ref{botta} that this conjecture holds for a general $X$ if its canonical covering belongs to the deformation family 5) of Theorem \ref{List}. The goal of this paper is to study the rationality of the remaining deformation families and give the positive answer to the above conjecture for a general member. The main results obtained in this paper are the following.
\begin{maintheorem}\label{main1} 
	Let $X$ be a general Fano-Enriques threefold, whose canonical covering $V$ is the smooth intersection of three quadrics in $\P^6$. Then $X$ is not rational.
\end{maintheorem}

Recall from \cite{corti2004birational} that the {\itshape pliability} of a variety $X$ is defined to be the set  of all Mori fibre spaces birational to $X$ up to a square equivalence (see Definition \ref{definition:square}) in Sarkisov category. It is denoted as $\mathcal{P}(X)$.

\begin{maintheorem}\label{main2} 
	Let $X$ be a Fano-Enriques threefold, whose canonical covering $V$ is the double covering of a quadric ramified in a divisor of degree $8$. Then there exist eight Sarkisov links of type I:
		\[
			\xymatrix{
				&\widetilde{X}_i\ar@{-->}^{\psi_i}[r] \ar_{\kappa_i}[ld]&U_i\ar^{f_i}[d]\\
				X&&\P^1
			}	
		\]
		where $\kappa_i$ is the Kawamata blow-up of a non-Gorenstein point $q_i$, the map $\psi_i$ is a flop and $f_i \colon U_i\to \P^1$ is a Mori fibre space, whose general fibre is a del Pezzo surfaces of degree $1$ for $i =1,\ldots,8$.
		
	Moreover, let $\Phi\colon X \dashrightarrow W$ be a birational map to a Mori fibre space $W/B$. Then one has the following possibilities:
		   \begin{itemize}
				\item $B=\mathrm{pt}$ and $\Phi$ is an isomorphism;
				\item $B\cong \P^1$ and for $i= 1,\ldots,8$ there exists the following commutative diagram:
				\[
					\xymatrix{X\ar@{-->}_{\rho_i}[dr] \ar@{-->}@/^1pc/[rrd]^{\Phi}&\\
					&U_i\ar@{-->}[r]^{\chi} \ar[d]_{f_i}&W\ar[d]\\
					&\P^1\ar^{\omega}[r]&B
					}
   				 \]	
    		where $\chi\colon U_i \dashrightarrow W$ is a birational map, $\rho_i=\psi_i \circ \kappa_i^{-1}$ and $\omega \colon \P^1\to B$ is an isomorphism.
			\end{itemize}
\end{maintheorem}

As a direct corollary from Main Theorem \ref{main2} we have the following.

\begin{corollary}
	In the notation and assumptions of Main Theorem \ref{main2} the following holds: 
	\begin{itemize}
		\item[\textup{1)}] $2\leq|\mathcal{P}(X)|\leq 9$. For a general $X$ one has $|\mathcal{P}(X)|=9$;
		\item[\textup{2)}] $\Aut(X)=\mathrm{Bir}(X)$;
		\item[\textup{3)}] $X$ is not birational to a conic bundle;
		\item[\textup{4)}] $X$ is not rational.
	\end{itemize}
\end{corollary}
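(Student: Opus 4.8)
The plan is to read everything off the classification of Mori fibre spaces birational to $X$ provided by Main Theorem \ref{main2}. That theorem says that if $W/B$ is a Mori fibre space and $\Phi\colon X\dashrightarrow W$ is birational, then either $B=\mathrm{pt}$ and $W\cong X$, or $B\cong\P^1$ and $W$ is square equivalent to one of the eight del Pezzo fibrations $U_i/\P^1$, $i=1,\dots,8$, arising from the type-I links $\rho_i=\psi_i\circ\kappa_i^{-1}$. Since $X$ is a $\Q$-factorial terminal Fano threefold of Picard rank $1$, it is itself a Mori fibre space over a point, and $\rho_1$ exhibits $U_1/\P^1$ as a Mori fibre space birational to $X$; hence $\mathcal P(X)$ consists of the class of $X/\mathrm{pt}$ together with (at most all of) the classes of $U_1/\P^1,\dots,U_8/\P^1$. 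As $\mathrm{pt}$ and $\P^1$ are not birational, the class of $X/\mathrm{pt}$ is distinct from each class $U_i/\P^1$, so $2\le|\mathcal P(X)|\le 9$. This gives the bounds in 1).

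Parts 2)--4) are then formal. For 2), any $\Phi\in\mathrm{Bir}(X)$ is in particular a birational map from $X$ to the Mori fibre space $X/\mathrm{pt}$, so by the first alternative of Main Theorem \ref{main2} it is an isomorphism; thus $\mathrm{Bir}(X)\subseteq\Aut(X)$, and the reverse inclusion is obvious. For 3), a conic bundle is a Mori fibre space whose base is a surface, but Main Theorem \ref{main2} allows only the bases $\mathrm{pt}$ and $\P^1$, so no Mori fibre space birational to $X$ is a conic bundle. For 4), a rational $X$ would be birational to $\P^2\times\P^1$, hence to a conic bundle over $\P^2$, contradicting 3) (equivalently, it would be birational to the Mori fibre space $\P^3/\mathrm{pt}$, forcing $X\cong\P^3$ by Main Theorem \ref{main2}, which is absurd since $-K_X$ is not Cartier).

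It remains to show that $|\mathcal P(X)|=9$ for a general $X$, i.e. that the eight fibrations $U_i/\P^1$ lie in pairwise distinct classes. Suppose $U_i/\P^1$ and $U_j/\P^1$ were square equivalent, say via $\chi\colon U_i\dashrightarrow U_j$, which after an automorphism of $\P^1$ we may assume satisfies $f_j\circ\chi=f_i$. Then $g:=\rho_j^{-1}\circ\chi\circ\rho_i$ is a birational self-map of $X$, hence an automorphism by part 2). Since $\rho_i=\psi_i\circ\kappa_i^{-1}$ with $\kappa_i$ the Kawamata blow-up of the non-Gorenstein point $q_i$ and $\psi_i$ a flop, the divisor extracted by $\rho_i$ lies over $q_i$ and the divisor contracted by $\rho_j^{-1}$ lies over $q_j$; as $g$ is biregular it extracts and contracts no divisor, and matching the exceptional data on the two sides of $g\circ\rho_i^{-1}=\rho_j^{-1}\circ\chi$ forces $g(q_i)=q_j$. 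Because $X$ has exactly the eight non-Gorenstein points $q_1,\dots,q_8$ (Theorem \ref{List}), it therefore suffices to verify that for a general $X$ no automorphism carries one $q_i$ to another $q_j$ with $i\ne j$; this I would obtain by describing $\Aut(X)$ through the canonical covering $V\to X$ branched over $q_1,\dots,q_8$, showing that for general $V$ the automorphisms of $V$ normalising the covering involution induce no nontrivial permutation of the branch points. This generality-type statement is the only part of the corollary requiring genuine work; everything else is immediate from Main Theorem \ref{main2}.
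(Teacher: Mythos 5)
Your overall structure coincides with the paper's: parts 2)--4) and the bounds $2\le|\mathcal P(X)|\le 9$ are read off directly from Main Theorem \ref{main2}, and the equality $|\mathcal P(X)|=9$ for general $X$ is reduced, exactly as in the paper, to the statement that a square equivalence $U_i\dashrightarrow U_j$ would induce a birational self-map of $X$, hence by 2) an automorphism carrying $q_i$ to $q_j$. The gap is that you stop at the point where the real work begins: you assert that for general $X$ no automorphism permutes the $q_i$ non-trivially and say you "would obtain" this from the canonical covering, but you give no argument. The paper closes this by observing that any $h\in\Aut(X)$ acts linearly on $\P^2=\P(H^0(X,\O_X(H)))$ via $\phi_{|H|}$ and must permute the images $r_i$ of the eight singular points, and then by exhibiting an explicit $V$ (Example \ref{exam}) for which this is impossible. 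The explicit example is not a formality here: the eight points $p_i$ are cut out on $V$ by $P_2=F_4=x_3=x_4=x_5=0$, so their images in $\P^2$ always lie on the conic $\{P_2=0\}$; any projective symmetry of the set therefore reduces to an element of $\Aut(\P^1)$ of that conic preserving eight marked points, and one must certify that within this constrained family the generic configuration really has trivial $\PGL(2,\mathbb{C})$-stabiliser (Example \ref{exam1} shows that special members genuinely fail, with $|\mathcal P(X)|=2$). So your reduction is correct and matches the paper, but the generality claim you defer is precisely the content of the corollary's part 1), and as written your proposal does not establish it.
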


\begin{corollary}
	Let $X$ be a general Fano-Enriques threefold such that its canonical covering $V$ is smooth. Then $X$ is not rational if and only if $V$ is one of the following:
	\begin{itemize}
		\item[\textup{i)}] the double covering of a quadric ramified in a divisor of degree $8$;
		\item[\textup{ii)}] the complete intersection of three quadrics in $\P^6$;
		\item[\textup{iii)}] the double covering of $\P^1\times \P^1 \times \P^1$ ramified in a divisor of degree $(2,2,2)$.
	\end{itemize}
\end{corollary}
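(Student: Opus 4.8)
The plan is to deduce this corollary formally by assembling the non-rationality results already in hand, treating the two implications of the biconditional separately and then doing some bookkeeping on the genericity hypotheses.

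For the ``only if'' direction I would argue by contraposition via the classification. Suppose $X$ is a general Fano-Enriques threefold with smooth canonical covering $V$, and suppose $X$ is not rational. By Cheltsov's classification of non-rational Fano-Enriques threefolds with smooth canonical covering (\cite[Corollary 18]{cheltsov2004rationality}), $V$ must lie in one of the deformation families $1)$, $2)$ or $5)$ of Theorem \ref{List}. Reading off that list, family $1)$ is the double covering of a quadric branched along a divisor of degree $8$, family $2)$ is the complete intersection of three quadrics in $\P^6$, and family $5)$ is the double covering of $\P^1\times\P^1\times\P^1$ branched along a divisor of tridegree $(2,2,2)$ --- these are precisely items i), ii), iii). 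Hence $V$ is one of i), ii), iii).

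For the ``if'' direction I would dispatch the three cases one at a time, each by invoking the corresponding non-rationality statement: if $V$ is as in i) (family $1)$), a general such $X$ is not rational by part $4)$ of the Corollary to Main Theorem \ref{main2}; if $V$ is as in ii) (family $2)$), a general such $X$ is not rational by Main Theorem \ref{main1}; and if $V$ is as in iii) (family $5)$), a general such $X$ is not rational by Theorem \ref{botta}. Combined with the previous paragraph, this gives the equivalence.

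The only point requiring care --- and the part I would write out explicitly --- is that the word ``general'' is consistent across all invoked statements: one must check that a point in general position in the parameter space of Fano-Enriques threefolds with smooth canonical covering (equivalently, in the parameter space of the relevant deformation family) simultaneously satisfies the genericity hypotheses demanded by Main Theorem \ref{main1}, by Main Theorem \ref{main2}, and by Theorem \ref{botta} in the respective cases. Since each of those non-rationality results fails only on a proper closed subset (or a countable union of proper closed subsets) of the corresponding parameter space, this is automatic, but it should be stated. No further geometric input is needed; the corollary is a direct consequence of the preceding theorems.
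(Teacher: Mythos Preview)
Your proposal is correct and matches the paper's intended argument. The paper does not give an explicit proof of this corollary; it is stated in the introduction as a direct consequence of Cheltsov's result \cite[Corollary 18]{cheltsov2004rationality}, Theorem \ref{botta}, Main Theorem \ref{main1}, and the corollary to Main Theorem \ref{main2}, which is exactly the assembly you describe. One small remark: in case i) you may even note that the corollary to Main Theorem \ref{main2} gives non-rationality for \emph{every} $X$ in that family, not just a general one, so no genericity hypothesis is needed there; your discussion of compatibility of the ``general'' conditions is otherwise appropriate and harmless.
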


{\bfseries Acknowledgments.} I am grateful to I. Cheltsov for introducing me to this topic and for careful support. I am also grateful to K. Loginov and A. Trepalin for useful conversations.

\section{The Smooth Intersection of Three Quadrics}

\subsection{The Rationality of Conic Bundles}
For many years the rationality of conic bundles has been well studied. In this section we would like to recall some classical results on conic bundles, which will be used later. For more details on the rationality of conic bundles we refer the reader to \cite{prokh_conics}.

Let $f\colon Y\to S$ be a conic bundle, where $\dim Y=3$ and $\dim S=2$. Recall that a conic bundle $f\colon Y\to S$ is said to be a {\itshape standard} conic bundle if both $Y$ and $S$ are smooth and the relative Picard group of $Y$ over $S$ has rank $1$. By \cite[Theorem 1.11]{sarkisov1983conic} for a given conic bundle $f\colon Y\to S$ there exists the following commutative diagram:
\[
\xymatrix{
&Y\ar[d]_{f}\ar@{-->}[r]^{\phi} &Y'\ar[d]^{f'}\\
&S\ar@{-->}[r]^{\psi}&S'
}
\]
such that $\phi$ and $\psi$ are birational maps and $f'\colon Y'\to S'$ is a standard conic bundle. Thus, without loss of generality, we may assume that $f\colon Y\to S$ is already a standard conic bundle.

Let 
\[
\Delta \vcentcolon =\{s\in S \mid f^{-1}(s) \text{ is a degenerate conic}\}.
\]
Then $\Delta$ is a divisor on $S$ and it is called the {\itshape degeneration curve} of $f$. By \cite[Proposition 1.8] {sarkisov1983conic} the singular points of $\Delta$ correspond to the fibres of $f$ that are double lines, and  by \cite[Corollary 1.11]{sarkisov1983conic} the curve $\Delta$ is nodal. Further, let $\widetilde{\Delta}$ be the Hilbert scheme of lines in the fibres of $f$ over $\Delta$. Then  $f$ induces a double covering $\tilde{f}\colon \widetilde{\Delta} \to \Delta$ ramified in the singular points of the curve $\Delta$, and hence one can consider a principally polarised abelian variety $\Prym(\widetilde{\Delta},\Delta)$, the {\itshape Prym variety} of the pair $(\widetilde{\Delta},\Delta)$, see \cite{beauville1977varietes}, \cite{shokurov1984prym}.

\begin{theorem}[{\cite[Proposition 2.8]{beauville1977varietes}}]
	In the above notation, let $\mathrm{J}(Y)$ be the intermediate Jacobian of $Y$. Then $\mathrm{J}(Y)\cong \Prym(\widetilde{\Delta},\Delta)$ as principally polarised abelian varieties.
\end{theorem}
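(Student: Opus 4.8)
The plan is to compute $H^{3}(Y,\mathbb{Z})$ as a polarised Hodge structure directly from the conic bundle structure, and to match it with the natural Hodge structure underlying $\Prym(\widetilde{\Delta},\Delta)$. First I would observe that, since the fibres of $f$ are conics and $S$ is a rational surface, $h^{3,0}(Y)=0$, so that $\mathrm{J}(Y)=H^{2,1}(Y)^{*}/H_{3}(Y,\mathbb{Z})$ is a well-defined principally polarised abelian variety whose polarisation is induced by the cup product on $H^{3}(Y,\mathbb{Z})$. Next I would run the Leray spectral sequence for $f$. Because $S$ is a smooth rational surface we have $H^{1}(S,\mathbb{Z})=H^{3}(S,\mathbb{Z})=0$, and since every fibre of $f$ is a tree of $\mathbb{P}^{1}$'s one has $R^{1}f_{*}\mathbb{Z}=0$; hence the only contribution is $H^{3}(Y,\mathbb{Z})\cong H^{1}(S,R^{2}f_{*}\mathbb{Z})$. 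Now $R^{2}f_{*}\mathbb{Z}$ fits in a short exact sequence $0\to\mathbb{Z}_{S}\to R^{2}f_{*}\mathbb{Z}\to\mathcal{F}\to 0$, where $\mathbb{Z}_{S}$ is generated by the class of a fibre and $\mathcal{F}$ is a constructible sheaf supported on $\Delta$; over the smooth locus of $\Delta$ the stalk of $\mathcal{F}$ is the rank-one module spanned by the difference of the classes of the two rulings of the degenerate conic, and the local monodromy interchanging these rulings is exactly the one defining the double covering $\tilde{f}\colon\widetilde{\Delta}\to\Delta$. This identifies $H^{1}(S,\mathcal{F})$, modulo torsion, with the $(-1)$-eigenspace $H^{1}(\widetilde{\Delta},\mathbb{Z})^{-}$ of the covering involution, compatibly with Hodge filtrations, and already gives $\mathrm{J}(Y)\cong\Prym(\widetilde{\Delta},\Delta)$ as complex tori.

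To make the polarisation transparent I would instead exhibit the isomorphism geometrically. Let $\widetilde{\Delta}$ be the Hilbert scheme of lines in the fibres of $f$, with universal line $\mathcal{L}\subset\widetilde{\Delta}\times Y$; the associated Abel--Jacobi map $\ell\mapsto\mathrm{AJ}(\ell-\ell_{0})$ (for a fixed base line $\ell_{0}$) defines a homomorphism $\alpha\colon\mathrm{J}(\widetilde{\Delta})\to\mathrm{J}(Y)$. If $\iota$ is the covering involution, then for each $\ell$ one has $\ell+\iota(\ell)=f^{-1}(p)$ where $p$ is the image point in $\Delta$; since any morphism from the rational surface $S$ to an abelian variety is constant, the class $\mathrm{AJ}(f^{-1}(p)-f^{-1}(p_{0}))$ vanishes for all $p$, so $\alpha\circ(1+\iota_{*})=0$. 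Therefore $\alpha$ factors through $\Prym(\widetilde{\Delta},\Delta)=\mathrm{im}(1-\iota_{*})$, yielding a morphism $\bar{\alpha}\colon\Prym(\widetilde{\Delta},\Delta)\to\mathrm{J}(Y)$.

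It then remains to prove that $\bar{\alpha}$ is an isomorphism of principally polarised abelian varieties. Surjectivity (equivalently, that $\bar{\alpha}$ is an isogeny) follows from the dimension count $\dim\mathrm{J}(Y)=h^{2,1}(Y)=\dim\Prym(\widetilde{\Delta},\Delta)$, which can be read off either from the cohomological identification above or from the standard formula for $h^{2,1}$ of a conic bundle in terms of the discriminant $\Delta$ and its double cover. To see that $\bar{\alpha}$ has degree one, I would compare polarisations: the canonical principal polarisation of the Prym is $\tfrac12$ of the restriction of the cup-product form of $H^{1}(\widetilde{\Delta})$ to the $(-1)$-eigenspace, and one must check that $\bar{\alpha}$ pulls back the intersection form on $H^{3}(Y,\mathbb{Z})$ to precisely this form. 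This is carried out by an explicit computation of the intersection pairing of the relevant curve classes on $Y$ — the classes of lines lying in the fibres, together with that of a section of $f$ — keeping careful track of the contribution of the double-line fibres lying over the nodes of $\Delta$.

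The step I expect to be the main obstacle is exactly this last one: controlling $R^{2}f_{*}\mathbb{Z}$, and hence the cup-product form, at the singular points of the discriminant curve (i.e. over the double-line fibres), and thereby pinning down the comparison of the principal polarisations rather than merely obtaining an isogeny. Everything else is a bookkeeping exercise with the Leray spectral sequence and the Abel--Jacobi correspondence.
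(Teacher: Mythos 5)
The paper does not prove this statement: it is quoted directly from Beauville (\emph{loc.\ cit.}, Proposition 2.8), so there is no in-paper argument to compare against. Your sketch is essentially Beauville's original proof --- the Leray spectral sequence identifying $H^{3}(Y,\mathbb{Z})$ with the anti-invariant part of $H^{1}(\widetilde{\Delta},\mathbb{Z})$, the Abel--Jacobi map from the curve of lines factoring through $\Prym(\widetilde{\Delta},\Delta)=\mathrm{im}(1-\iota_{*})$, and the comparison of principal polarisations --- and the step you single out as the main obstacle (controlling $R^{2}f_{*}\mathbb{Z}$ and the cup-product pairing over the nodes of $\Delta$, where the fibre is a double line, so as to get a degree-one map rather than a mere isogeny) is precisely the technical heart of Beauville's argument, handled there via the theory of allowable (admissible) double covers of the nodal curve $\Delta$.
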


	Recall that a curve $\Delta$ is said to be {\itshape quasi-trigonal} if $\Delta$ is obtained from a hyperelliptic curve by identifying two smooth points.

\begin{theorem}[{\cite{shokurov1984prym}, \cite{beauville1977varietes}}]\label{theorem:prym}
	The Prym variety $\Prym(\widetilde{\Delta},\Delta)$ is a sum of Jacobians of smooth curves if and only if $\Delta$ is either
	\begin{itemize}
		\item hyperelliptic;
		\item trigonal;
		\item quasi-trigonal;
		\item a plane quintic curve and the corresponding double covering
is given by an even theta-characteristic.
	\end{itemize}
\end{theorem}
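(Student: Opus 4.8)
The plan is to prove the two implications separately: the ``if'' direction rests on four explicit classical constructions, while the ``only if'' direction rests on an analysis of the singular locus of the Prym theta divisor. Throughout I would work with the allowable double covering $\widetilde{\Delta}\to\Delta$ in the sense of \cite{beauville1977varietes} (so that the nodes of $\Delta$, over which $\widetilde{\Delta}\to\Delta$ is ramified, cause no trouble); when $\Delta$ is smooth this is an \'etale double cover classified by a $2$-torsion class $\eta\in\mathrm{Pic}^0(\Delta)$, and $\Prym(\widetilde{\Delta},\Delta)$ is a principally polarised abelian variety of dimension $g(\Delta)-1$ with theta divisor $\Xi$.

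For the ``if'' direction I would treat each case in turn. If $\Delta$ is hyperelliptic, use Mumford's description of Pryms of hyperelliptic curves: the class $\eta$ corresponds, modulo the hyperelliptic involution, to a partition of the branch points of $\Delta\to\P^1$ into two parts, and then $\Prym(\widetilde{\Delta},\Delta)\cong J(\Delta_1)\times J(\Delta_2)$, where $\Delta_1,\Delta_2$ are the hyperelliptic curves branched over the two parts of the partition (one factor possibly trivial). If $\Delta$ carries a $g^1_3$, invoke Recillas' trigonal construction: to the pair $(\widetilde{\Delta},\Delta)$ together with the trigonal pencil one associates a tetragonal curve $\Delta'$ (parametrising, fibre by fibre of the $g^1_3$, the ``odd'' divisors lifting to $\widetilde{\Delta}$), and $\Prym(\widetilde{\Delta},\Delta)\cong J(\Delta')$. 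The quasi-trigonal case follows because the trigonal construction extends to quasi-trigonal curves as a limiting case and still produces a \emph{smooth} tetragonal $\Delta'$, so the isomorphism with $J(\Delta')$ persists. Finally, for $\Delta$ a plane quintic and $\eta$ an even theta-characteristic, use the construction of Masiewicki (see also \cite{shokurov1984prym}) exhibiting $\Prym(\widetilde{\Delta},\Delta)$ as the Jacobian of a smooth curve via the net of quadrics / even spin structure attached to $(\Delta,\eta)$; here the parity of $\eta$ is precisely what guarantees one lands on a Jacobian rather than, as in the odd case (Clemens--Griffiths), on the intermediate Jacobian of a cubic threefold.

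For the ``only if'' direction, suppose $\Prym(\widetilde{\Delta},\Delta)$ is isomorphic, as a principally polarised abelian variety, to a product $\prod_i J(C_i)$ of Jacobians of smooth curves, and compare the singular loci of the two descriptions of the theta divisor. On the Jacobian side the Riemann singularity theorem identifies $\mathrm{Sing}$ of each factor's theta divisor with $W^1_{g(C_i)-1}(C_i)$, whose dimension is controlled by Martens' theorem and Mumford's refinement: the extremal dimension is attained exactly for hyperelliptic curves, and the next case exactly for trigonal, bielliptic, or smooth plane quintic curves. On the Prym side, Mumford's computation describes $\mathrm{Sing}\,\Xi$ --- up to the ``exceptional'' locus coming from $2$-torsion --- in terms of line bundles $L$ on $\Delta$ with $\mathrm{Nm}(L)=K_\Delta$ and $h^0(L)\geq 4$ even. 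Matching the dimension \emph{and} the local structure of $\mathrm{Sing}\,\Xi$ against the Riemann--Kempf picture forces $\Delta$ to admit a pencil of degree $\leq 3$ or to be a plane quintic; a closer look then eliminates the bielliptic possibility, recognises the quasi-trigonal degeneration, and --- tracking the parities of $h^0$ through Mumford's formula --- forces $\eta$ to be an \emph{even} theta-characteristic in the quintic case. A product decomposition is handled in parallel: reducibility of $\Xi$ forces, via irreducibility of the Prym theta divisor for non-hyperelliptic $\Delta$, the hyperelliptic case, in which Mumford's description already presents $\Xi$ as the reducible divisor $\bigl(\Theta_1\times J(\Delta_2)\bigr)\cup\bigl(J(\Delta_1)\times\Theta_2\bigr)$.

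I expect this last step --- the precise comparison of $\mathrm{Sing}\,\Xi$ with the singularity of a Jacobian theta divisor, and squeezing out exactly the four cases with the correct parity condition --- to be the main obstacle. Mumford's strengthening of Martens' theorem supplies the rough trichotomy (hyperelliptic / trigonal-or-bielliptic / plane quintic) on the Jacobian side fairly cleanly, but transporting it back into a statement about $\Delta$ itself rather than about the $C_i$, ruling out the bielliptic curves, detecting the quasi-trigonal limit, and controlling the theta-characteristic parity all require the full force of Mumford's and Shokurov's analysis of the stable and exceptional components of $\mathrm{Sing}\,\Xi$ together with a degeneration argument; that is where the real work lies.
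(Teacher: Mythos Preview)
The paper does not prove this theorem at all: it is stated as a classical result and simply attributed to \cite{shokurov1984prym} and \cite{beauville1977varietes}, with no proof or sketch given. So there is no ``paper's own proof'' to compare your proposal against. Your outline is a reasonable summary of how the cited references actually establish the result (Mumford's hyperelliptic description, Recillas' trigonal construction, the plane-quintic case, and the analysis of $\mathrm{Sing}\,\Xi$ via Martens--Mumford on the Jacobian side), but for the purposes of this paper the theorem is a black box and no argument is required.
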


\subsection{Conic Bundles Birational $X$}\label{invol}

During this section let $X$ be a Fano-Enriques threefold, whose canonical covering $V$ is the smooth intersection of three quadrics in $\P^6$, and denote by $\sigma \colon V \to V$ the involution corresponding to the double covering $\pi\colon V\to X$.

By \cite[Section 6.1.5]{bayle} there exist homogeneous coordinates $x_0,\ldots,x_6$ on $\P^6$ such that $V\subset \P^6$ is defined by the following system of equations:
			\[
				\begin{cases}
					P_1(x_0,x_1,x_2,x_3)+Q_1(x_4,x_5,x_6)=0,\\
					P_2(x_0,x_1,x_2,x_3)+Q_2(x_4,x_5,x_6)=0,\\
					P_3(x_0,x_1,x_2,x_3)+Q_3(x_4,x_5,x_6)=0,
				\end{cases}
			\]
			where $P_i$ and $Q_i$ are homogeneous polynomials of degree $2$, and $\sigma$ is given by 
			\[
				\sigma (x_0:x_1:x_2:x_3:x_4:x_5:x_6)=(x_0:x_1:x_2:x_3:-x_4:-x_5:-x_6).
			\]
The involution $\sigma$ fixes exactly eight points $p_i$ for $i=1,\ldots,8$ on $V$, which are given by
			\[
				V\cap\{x_4=x_5=x_6=0\}.
			\]
The smoothness of $V$ implies that the intersection $V\cap\{x_0=x_1=x_2=x_3=0\}$ is empty, i.e.
\[
\bigcap_{i=1}^3 \{Q_i(x_4,x_5,x_6)=0\}=\varnothing.
\]

Denote by $q_i$ for $i=1,\ldots,8$ the singular points of $X$, i.e. $q_i=\pi(p_i)$. Notice that they are all of the type $\frac{1}{2}(1,1,1)$. Moreover, by changing the coordinates system, we can assume that
\begin{align*}
&p_1=(1:0:0:0:0:0:0);\\
&p_2=(0:1:0:0:0:0:0);\\
&p_3=(0:0:1:0:0:0:0);\\
&p_4=(0:0:0:1:0:0:0),
\end{align*}
and hence $P_n (x_0,x_1,x_2,x_3)$ do not contain monomials of the form $\alpha x_k^2$.

Let's consider the linear subsystem $\L=\langle x_0,x_1,x_2,x_3\rangle|_V \subset |-K_V|$. Notice that one has  $\L=\pi^*|H|$ in the notation of Definition \ref{defin}. Then $\L_{p_1}=\langle x_1,x_2,x_3\rangle|_V \subset \L$, the linear subsystem of sections of $\L$ vanishing at $p_1$, yields a rational map 
\[
	\phi_{\L_{p_1}}\colon V \dashrightarrow \P^2,
\]
undefined at $p_1$ only. Further, denote by $\L_{p_1,p_r}\subset \L_{p_1}$ the linear subsystem of sections of $\L_{p_1}$ that additionally vanish at $p_r$. 

\begin{proposition}\label{prop:fibres}
For a general $V$ the base locus $\mathrm{Bs}\L_{p_1,p_r}$ consists of four smooth conics.  
\end{proposition}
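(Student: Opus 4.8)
The plan is to identify $\mathrm{Bs}\,\L_{p_1,p_r}$ with an explicit linear section of $V$ and to read the four conics off its equations. First, note that $\L_{p_1,p_r}=\langle\ell_1,\ell_2\rangle|_V$, where $\ell_1,\ell_2$ span the space of linear forms in $\langle x_1,x_2,x_3\rangle$ that vanish at $p_r$; this space is two-dimensional, because $V\cap\{x_1=x_2=x_3=0\}=\{p_1\}$ (here one uses $\bigcap_i\{Q_i=0\}=\varnothing$), so the point $p_r\neq p_1$ imposes exactly one nontrivial linear condition. Since every form in $\langle x_1,x_2,x_3\rangle$ already vanishes at $p_1=e_0$, both $\ell_1$ and $\ell_2$ vanish at $p_1$ and at $p_r$, so $\mathrm{Bs}\,\L_{p_1,p_r}=V\cap\Pi$, where $\Pi=\{\ell_1=\ell_2=0\}\cong\P^4$ contains the line $\langle p_1,p_r\rangle$.

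Next I would put this section into a normal form. A linear change of the coordinates $x_0,x_1,x_2,x_3$ preserves both the split shape $P_i(x_0,x_1,x_2,x_3)+Q_i(x_4,x_5,x_6)=0$ of the equations of $V$ and the point $p_1=e_0$, so after such a change I may assume $\Pi=\{x_2=x_3=0\}$ and, moreover, that in these coordinates $p_r=e_1$. For $r\in\{2,3,4\}$ this is only a relabelling of $x_1,x_2,x_3$; for $r\in\{5,\dots,8\}$ one completes $\ell_1,\ell_2$ to a coordinate system and then uses $p_1,p_r\in V$ to check that the restriction of each $P_i$ to $\Pi$ is a product of two linear forms, one cutting out $p_1$ and the other $p_r$, after which a further linear change achieves the normal form. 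Either way, $\mathrm{Bs}\,\L_{p_1,p_r}$ is cut out in the $\P^4$ with coordinates $[x_0:x_1:x_4:x_5:x_6]$ by
\[ R_i=a_ix_0x_1+Q_i(x_4,x_5,x_6)=0,\qquad i=1,2,3, \]
where, for general $V$, the vector $(a_1,a_2,a_3)$ is nonzero; say $a_1\neq0$.

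Then I would analyse this system by projecting to the plane $\overline{\P}^2=\{[x_4:x_5:x_6]\}$. Eliminating $x_0x_1$ with the help of $R_1$, the system becomes $\{R_1=0\}$ together with the two conics $a_1Q_2-a_2Q_1=0$ and $a_1Q_3-a_3Q_1=0$ in $\overline{\P}^2$, and for general $V$ these are independent and meet transversally in four distinct points $s_1,\dots,s_4$. At each $s_k$ one has $(Q_1(s_k),Q_2(s_k),Q_3(s_k))=\mu_k(a_1,a_2,a_3)$ with $\mu_k\neq0$, the nonvanishing again being forced by $\bigcap_i\{Q_i=0\}=\varnothing$. There are no points of $\mathrm{Bs}\,\L_{p_1,p_r}$ over $\overline{\P}^2\setminus\{s_1,\dots,s_4\}$, while over $s_k$, writing $(x_4,x_5,x_6)=t\hat s_k$, the equations collapse to $x_0x_1+\mu_kt^2=0$, a smooth conic $C_k\cong\P^1$ in the $\P^3$ spanned by $e_0$, $e_1$, $\hat s_k$ and passing through $p_1$ and $p_r$. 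The indeterminacy locus $\{x_4=x_5=x_6=0\}$ of the projection meets $\mathrm{Bs}\,\L_{p_1,p_r}$ only in $p_1$ and $p_r$, which lie on all the $C_k$; hence $\mathrm{Bs}\,\L_{p_1,p_r}=C_1\cup C_2\cup C_3\cup C_4$, a union of four smooth conics, distinct because $s_1,\dots,s_4$ are.

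The real work should lie in two places. The first is the case $r\in\{5,\dots,8\}$ of the reduction: one must verify that after the coordinate change the equations genuinely take the form $R_i=a_ix_0x_1+Q_i$, and this is precisely where the incidences $p_1,p_r\in V$ are used, to remove the $x_0^2$ and $x_1^2$ terms. The second is the genericity input that the conics $a_1Q_2-a_2Q_1$ and $a_1Q_3-a_3Q_1$ are linearly independent and meet transversally in four points for general $V$; this is routine but needs the hypotheses invoked with care, and the same transversality shows that $\mathrm{Bs}\,\L_{p_1,p_r}$ is reduced away from $p_1$ and $p_r$.
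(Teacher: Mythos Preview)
Your proposal is correct and follows essentially the same route as the paper. Both arguments restrict $V$ to the $\P^4$ cut out by the two linear forms defining $\L_{p_1,p_r}$, observe that the restricted $P_i$ become pairwise proportional (the paper phrases this as ``pairwise linearly dependent'' because $p_1$ and $p_r$ are common zeroes, while you normalise coordinates further so that $P_i|_\Pi=a_ix_0x_1$), and then eliminate to a pencil of conics in the $(x_4,x_5,x_6)$-plane whose four base points give the four smooth conics; your write-up is somewhat more explicit about the coordinate changes and the role of the genericity hypothesis.
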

\begin{proof}
Suppose that  $f_1=\sum_{i=1}^3 a_ix_i$ and $f_2=\sum_{i=1}^3 b_ix_i$ are  linearly independent sections of $\L_{p_1,p_r}$. Let $C_r=\mathrm{Bs}\L_{p_1,p_r}$. Notice that $C_r$ is defined by 
\[
	V\cap\left\{f_1=f_2=0\right\}.
\]
Using the equations $f_1=f_2=0$, we can linearly eliminate two variables, i.e. up to a linear change of coordinates, say $x_2=\lambda x_1$ and $x_3=\mu x_1$ for $\lambda$, $\mu \in \mathbb{C}$. Consequently, we can define the curve $C_r$ in $\P^4$ with coordinates $x_0,x_1,x_4,x_5,x_6$ by the following system:
\[
	\begin{cases}
		P_1(x_0, x_1,\lambda x_1, \mu x_1)+Q_1(x_4,x_5,x_6)=0,\\
		P_2(x_0, x_1,\lambda x_1, \mu x_1)+Q_2(x_4,x_5,x_6)=0,\\
		P_3(x_0, x_1,\lambda x_1, \mu x_1)+Q_3(x_4,x_5,x_6)=0.
	\end{cases}
\]
The polynomials $P_n(x_0, x_1,\lambda x_1, \mu x_1)$ have two non-zero solutions, that correspond to the points $p_1$ and $p_r$. Therefore, they are pairwise linearly dependent. Hence, without loss of generality, we can assume that $C_r$ is defined by
\[
	\begin{cases}
		P_1(x_0, x_1,\lambda x_1, \mu x_1)+Q_1(x_4,x_5,x_6)=0,\\
		\alpha Q_1(x_4,x_5,x_6)+Q_2(x_4,x_5,x_6)=0,\\
		\beta Q_1(x_4,x_5,x_6)+Q_3(x_4,x_5,x_6)=0,
	\end{cases}
\]
where $\alpha$, $\beta\in \mathbb{C}$. For a general choice of $V$, the last two equations give four distinct roots for $(x_4,x_5,x_6)$. Plugging these roots into the first equation, we obtain four smooth reduced conics.
\end{proof}

Hence, we write $C_r=C_{r,1}\cup C_{r,2} \cup C_{r,3} \cup C_{r,4}$, where $C_{r,i}$ are smooth conics.

\begin{proposition}\label{proposition:free}
	The blow-up of the point $p_1$ resolves the indeterminacy of $\phi_{\L_{p_1}}$. More precisely, one has the following commutative diagram:
	\[
	\xymatrix{
		&\widehat{V}\ar[ld]_{\tau} \ar[rd]^{\epsilon}&\\
	V\ar@{-->}[rr]^{\phi_{\L_{p_1}}}&&\P^2
	}
\]
where $\tau\colon \widehat{V}\to V$ is the blow-up of $p_1$ and $\epsilon \colon \widehat{V}\to \P^2$ is a fibration, whose general fibre is an elliptic curve.
\end{proposition}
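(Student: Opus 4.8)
The plan is to combine a local computation at $p_1$ with a global intersection–number computation on $\widehat{V}$.

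First I would pin down the local geometry at $p_1$. As in the discussion preceding the proposition, $\mathrm{Bs}\,\L_{p_1}=V\cap\{x_1=x_2=x_3=0\}$; substituting $x_1=x_2=x_3=0$ into the equations of $V$ annihilates every $P_i$ (these have no monomial $\alpha x_k^2$, $k=0,1,2,3$) and leaves $Q_1=Q_2=Q_3=0$ in the variables $x_4,x_5,x_6$, which have no common zero in $\P^2$; hence set-theoretically $\mathrm{Bs}\,\L_{p_1}=\{p_1\}$. Since $V$ is smooth at $p_1$, the tangent space $T_{p_1}V$ is cut out by the linear parts of $F_n=P_n+Q_n$ at $p_1$, which are $\sum_{k=1}^{3}c^{(n)}_{0k}x_k$ (the $Q_n$ contribute no linear term, and $P_n$ no $x_0^2$), and smoothness forces the $3\times3$ matrix $(c^{(n)}_{0k})$ to be invertible. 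Therefore $T_{p_1}V=\langle\partial_{x_4},\partial_{x_5},\partial_{x_6}\rangle$, so $y=(x_4,x_5,x_6)$ are local analytic coordinates on $V$ at $p_1$, and eliminating $x_1,x_2,x_3$ from the equations gives $x_j|_V=q_j(y)+O(|y|^3)$, where $(q_1,q_2,q_3)$ is the image of $(Q_1,Q_2,Q_3)$ under an invertible linear substitution; in particular the conics $q_1,q_2,q_3$ span the same net as $Q_1,Q_2,Q_3$ and have no common zero in $\P^2$.

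Next I would show that the single blow-up $\tau\colon\widehat{V}\to V$ of the reduced point $p_1$ resolves $\phi_{\L_{p_1}}$. In a chart $y_1=t$, $y_2=ts_2$, $y_3=ts_3$ on $\widehat{V}$ one has $\tau^{*}x_j=t^2\bigl(q_j(1,s_2,s_3)+t\cdot(\cdots)\bigr)$, so the base ideal $(x_1,x_2,x_3)\O_V$ pulls back to $(t^2)\cdot\bigl(q_1(1,s),q_2(1,s),q_3(1,s),\ t\text{-corrections}\bigr)$; since $q_1,q_2,q_3$ have no common zero on $E\cong\P^2$, the second factor is the unit ideal along $E$, whence $\tau^{-1}\bigl((x_1,x_2,x_3)\O_V\bigr)\cdot\O_{\widehat{V}}=\O_{\widehat{V}}(-2E)$ near $E$ (and $=\O_{\widehat{V}}$ away from $E$). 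Hence $\widehat{\L}:=\tau^{*}\L_{p_1}-2E$ is base-point-free; the associated morphism $\epsilon\colon\widehat{V}\to\P^2$ agrees with $\phi_{\L_{p_1}}\circ\tau$ on $\widehat{V}\setminus E$ and, being defined by the same $3$-dimensional space of sections, equals $\phi_{\L_{p_1}}\circ\tau$, giving the stated commutative diagram. (Equivalently: a general member of $\L_{p_1}$ has $\mult_{p_1}=2$ with leading conic $\sum a_jq_j\neq0$, so the strict transforms have empty base locus on $E$, namely $\{q_1=q_2=q_3=0\}=\varnothing$.)

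Finally I would identify the general fibre $F=\epsilon^{-1}(\mathrm{pt})$ by adjunction on $\widehat{V}$. Writing $h=\tau^{*}(-K_V)$, a member of $\L_{p_1}$ is a hyperplane section of $V\subset\P^6$, i.e.\ lies in $|-K_V|$, with a point of multiplicity $2$ at $p_1$, so $\widehat{\L}$ has class $h-2E$ and $F\sim(h-2E)^2$; also $K_{\widehat{V}}=\tau^{*}K_V+2E=-h+2E$. Using $h^3=(-K_V)^3=\deg V=8$, $E^3=1$, and $h^2\cdot E=h\cdot E^2=0$, one gets
\[
K_{\widehat{V}}\cdot F=(-h+2E)(h-2E)^2=-h^3+8E^3=-8+8=0.
\]
A fibre of $\epsilon$ over a point of the surface $\P^2$ is a local complete intersection curve with trivial normal bundle, so $\omega_F=\omega_{\widehat{V}}|_F$ and $2p_a(F)-2=K_{\widehat{V}}\cdot F=0$, i.e.\ $p_a(F)=1$. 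For general $V$ the general $F$ is moreover smooth and irreducible: away from $E$ it is $V\cap H_1\cap H_2$ with $H_1,H_2\in|\langle x_1,x_2,x_3\rangle|$ general, which is smooth by Bertini (its base locus on $V$ is the single point $p_1$), and it meets $E$ in the four points of $(\epsilon|_E)^{-1}(\mathrm{pt})$ — where $\epsilon|_E\colon\P^2\to\P^2$ is the degree-$4$ map given by the net $\langle q_1,q_2,q_3\rangle$ — which for general $V$ are distinct and correspond to four smooth branches of $V\cap H_1\cap H_2$ at $p_1$ with distinct tangent directions, so the strict transform is smooth there as well; irreducibility follows from a standard Bertini-type argument (or from transitivity of the monodromy of the four-point cover $\epsilon|_E$). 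Hence the general fibre is a smooth curve of genus $1$. (Consistently with Proposition \ref{prop:fibres}, over the images of $p_2,\dots,p_8$ the fibre degenerates to a configuration of four conics.) I expect the main obstacle to be the penultimate paragraph: although the scheme-theoretic base locus $V\cap\{x_1=x_2=x_3=0\}$ is a non-reduced fat point of length $8$ at $p_1$, a single blow-up of the reduced point already resolves it, and establishing this requires identifying the leading terms of $x_1,x_2,x_3$ on $V$ with a linear image of the conics $Q_i$ and using that these have no common zero; a secondary delicate point is the smoothness and irreducibility of the general fibre along $E$, which is exactly where the genericity hypothesis on $V$ is used.
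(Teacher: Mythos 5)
Your proposal is correct and follows essentially the same route as the paper: the key step in both is to pass to local coordinates $x_4,x_5,x_6$ at $p_1$, use smoothness of $V$ to invert the matrix of linear coefficients so that $x_1,x_2,x_3$ restricted to $V$ have leading quadratic terms forming a net of conics with empty common zero locus, and conclude that the strict transform of $\L_{p_1}$ is base-point-free on the exceptional $\P^2$. Your additional adjunction computation $K_{\widehat V}\cdot(h-2E)^2=0$ justifying that the general fibre has genus $1$ is a correct elaboration of what the paper dismisses with ``by construction.''
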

\begin{proof}
	A general fibre of $\epsilon$ is an elliptic curve by construction. Notice, that it in order to prove that $\epsilon$ is a morphism, it is enough to show that the strict transform of $\L_{p_1}$ on $\widehat{V}$ is base point free on the $\tau$-exceptional divisor $E\cong \P^2$. Let's consider the affine patch $\{x_0\neq 0\}$. Let $u_i=x_i/x_0$ be affine coordinates. Then $V\cap \{x_0\neq 0\}$ is given by
	\begin{align}\label{systemofeq}
	\begin{cases}
		\alpha_1 u_1 +\alpha_2 u_2+ \alpha_3 u_3 +R_1(u_1, u_2, u_3) +Q_1(u_4,u_5,u_6)=0,\\
		\beta_1 u_1 +\beta_2 u_2+ \beta_3 u_3 +R_2(u_1, u_2, u_3)+Q_2(u_4,u_5,u_6)=0,\\
		\gamma_1 u_1 + \gamma_2 u_2 + \gamma_3 u_3 +R_3(u_1, u_2, u_3)+Q_3(u_4,u_5,u_6)=0,
	\end{cases}
	\end{align}
where $\alpha_i$, $\beta_i$, $\gamma_i$ are some complex numbers and $R_i(u_1, u_2, u_3)$ are homogeneous polynomials of degree $2$. Since $V$ is smooth, then the matrix 
	\[
		\begin{pmatrix}
		\alpha_1 & \alpha_2 & \alpha_3\\
		\beta_1 & \beta_2 & \beta_3\\
		\gamma_1 & \gamma_2 & \gamma_3\\
		\end{pmatrix}
	\]
is non-degenerate. So, by a linear change of coordinates, we can bring the system \eqref{systemofeq} into the following form:
	\[
	\begin{cases}
		u_1= \widetilde{Q}_1(u_4,u_5,u_6) +\widetilde{R}_1(u_1, u_2, u_3),\\
		u_2= \widetilde{Q}_2(u_4,u_5,u_6) +\widetilde{R}_2(u_1, u_2, u_3),\\
		u_3= \widetilde{Q}_3(u_4,u_5,u_6) +\widetilde{R}_3(u_1, u_2, u_3),
	\end{cases}
	\]
for some homogeneous polynomials $\widetilde{Q}_i(u_4,u_5,u_6)$ and $\widetilde{R}_i(u_1, u_2, u_3)$ of degree $2$, where the polynomials $\widetilde{Q}_i(u_4,u_5,u_6)$ are linear combinations of $Q_i(u_4,u_5,u_6)$. Notice that $u_4$, $u_5$, $u_6$ are local coordinates in a neighbourhood of the point $p_1$. Consequently, considering the Taylor series at the origin of $u_i$ for $i=1,2,3$, we see that 
\[
u_i= \widetilde{Q}_i(u_4,u_5,u_6) + \text{higher order terms}.
\]
Notice, that one has 
\[
\bigcap_{i=1}^3 \{\widetilde{Q}_i(u_4,u_5,u_6)=0\}=\varnothing.
\]
Consequently, the restriction on $E$ of the strict transform of $\L_{p_1}$ on $\widehat{V}$ is a base point free linear system of conics.
\end{proof}

\begin{remark}\label{remark:curves}
	Let $\omega_i \colon \widehat{V}'\to \widehat{V}$ be the blow-up of the point $\tau^{-1}(p_i)$. Then the  curves $(\omega_i)_*^{-1}(C_{i,j})\cap (\omega_i)_*^{-1}(C_{i,k})$ do not intersect for all $i,j,k$, otherwise we get a contradiction with the smoothness of $V$.
\end{remark}

Thus, by Proposition \ref{proposition:free}, the linear subsystem $\H_{q_1}\subset |H|$ of sections of $|H|$ vanishing at $q_1=\pi(p_1)$ determines a rational map 
\[
	\phi_{\H_{q_1}}\colon X\dashrightarrow \P^2,
\] 
which, after the Kawamata blow-up $\theta \colon \widehat{X}\to X$ of the point ${q_1}$, gives a conic bundle $c:\widehat{X}\to \P^2$. Namely, there exists the following commutative diagram:  

\[
	\xymatrix{
		\widehat{V}\ar@{->}@/_3pc/[rrdd]_{\epsilon}\ar[rrrr]^{\widehat{\pi}}\ar[rd]_\tau&&&&\widehat{X}\ar[ld]^\theta \ar@{->}@/^3pc/[lldd]^{c}\\
		&V\ar[rr]^\pi \ar@{-->}[rd]_{\phi_{\L_{p_1}}}&&X\ar@{-->}[ld]^{\phi_{\H_{q_1}}}&\\
		&&\P^2&&
	}
\]
where $\widehat{\pi}\colon \widehat{V}\to \widehat{X}$ is the induced double covering ramified in the $\tau$-exceptional divisor and in $\tau^{-1}(p_i)$ for $i\neq 1$. However, this conic bundle is non-standard, because $\widehat{X}$ is singular. 
\subsection{From a Non-standard to a Standard Conic Bundle}\label{sect}

In the previous section we arrived at the non-standard conic bundle $c\colon \widehat{X}\to \P^2$. In order to make it standard, firstly we need to resolve the singularities of $\widehat{X}$. More precisely, since there are only eight singular points $q_i$ on $X$ and they are all of the type $\frac{1}{2}(1,1,1)$, then it is enough to consider the Kawamata blow-up of them $\kappa\colon \widetilde{X}\to X$ to obtain a smooth threefold $\widetilde{X}$, see \cite{kawamata1996divisorial}. Notice that the morphism $\kappa$ resolves indeterminacies of $\phi_{\H_{q_1}}$, and therefore  we get a morphism $f\colon \widetilde{X}\to \P^2$, whose general fibre is a conic. However, $f$ is not a standard conic bundle. Indeed, let $s_i=\phi_{\H_{q_1}}(q_i)\in \P^2$, where $i\neq 1$, and let $E_{q_i}$ be the $\kappa$-exceptional divisor over the point  $q_i$. Then we see that $f^{-1}(s_i)$ consists of four disjointed smooth reduced rational curves $Z_i=Z_{i,1} \sqcup Z_{i,2} \sqcup Z_{i,3} \sqcup Z_{i,4}$ by Proposition \ref{prop:fibres} and Remark \ref{remark:curves} and the divisor $E_{q_i}$.
	
 	We now introduce a method that will resolve this problem and provide a standard conic bundle. The method works locally, hence it is enough to treat only one $\kappa$-exceptional divisor $E_{q_i}$.

\begin{lemma} \label{lemma:normal}
	One has $\N_{Z_{i,j}/\widetilde{X}}\cong \O_{\P^1}(-1)\oplus \O_{\P^1}(-1)$.
\end{lemma}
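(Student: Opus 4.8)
The plan is to compute the normal bundle of each curve $Z_{i,j}\subset\widetilde X$ by relating it to the geometry upstairs on the canonical covering. Recall that $Z_{i,j}=\kappa_*^{-1}$ of the image in $X$ of the conic $C_{i,j}\subset V$, and that on $\widehat V$ (the blow-up of $p_1$) the strict transform of $C_{i,j}$ is a fibre component of the elliptic fibration $\epsilon\colon\widehat V\to\P^2$ over the point $s_i=\phi_{\L_{p_1}}(p_i)$. Since each $C_{i,j}$ is a smooth conic not passing through $p_1$, blowing up $p_1$ does not affect it, so it suffices to understand $\N_{C_{i,j}/V}$ and then track how it changes under the double cover $\pi$, the Kawamata blow-up $\kappa$, and the blow-up of the $p_i$'s for $i\neq 1$.

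First I would show that the conic $C_{i,j}$ is a \emph{scheme-theoretic} complete intersection of two members of $\L_{p_1,p_i}$ together with the quadrics cutting out $V$; more precisely, from the proof of Proposition \ref{prop:fibres}, after eliminating two variables, $C_{i,j}$ sits in a $\P^3$ (coordinates $x_0,x_1$ and the two remaining $x_4,x_5,x_6$ after imposing the point constraints) as a conic in a plane, and one can write down a Koszul-type resolution. This gives $\N_{C_{i,j}/V}$ as an extension of line bundles of total degree $\deg(-K_V)|_{C_{i,j}} - \deg(K_{C_{i,j}}) = 2\cdot 2 - (-2)$... — in any case, on a smooth threefold with $C\cong\P^1$ the normal bundle is $\O(a)\oplus\O(b)$ with $a+b=\deg N = -K_V\cdot C - 2 = 4-2$ before any modifications; the modifications (passing to $X$, which is étale near generic points of $C_{i,j}$, then the Kawamata blow-up of the \emph{other} seven points $q_k$, $k\neq i$, which do not lie on the image of $C_{i,j}$ by Remark \ref{remark:curves}) do not change the normal bundle generically, but the Kawamata blow-up of $q_i$ \emph{itself} — which lies on each $C_{i,j}$ — lowers the degree. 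The curve $C_{i,j}$ passes through $p_i$, hence through $q_i$, and blowing up $q_i$ with the $\tfrac12(1,1,1)$ Kawamata weights drops the self-intersection of $Z_{i,j}$ on the ambient surface by the appropriate amount; the careful bookkeeping (using that $Z_{i,j}$ meets $E_{q_i}$ transversally at one point, as it is a fibre component over $s_i$) is what I expect to produce the two $-1$'s.

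Concretely, the cleanest route is: compute $K_{\widetilde X}\cdot Z_{i,j}$ and $\det\N_{Z_{i,j}/\widetilde X}$ via adjunction, $\deg\N_{Z_{i,j}/\widetilde X} = -K_{\widetilde X}\cdot Z_{i,j} - 2$. Since $f\colon\widetilde X\to\P^2$ is a conic bundle and $Z_{i,j}$ is a component of a reducible fibre $f^{-1}(s_i)=Z_{i,1}\sqcup\cdots\sqcup Z_{i,4}\sqcup E_{q_i}$, we have $-K_{\widetilde X}\cdot Z_{i,j} = -K_{\widetilde X/\P^2}\cdot Z_{i,j}$; using $K_{\widetilde X} = \kappa^*K_X + \sum_k \tfrac12 E_{q_k}$ and that $Z_{i,j}$ meets only $E_{q_i}$ among the exceptional divisors, and meets it with multiplicity $1$, one gets $-K_{\widetilde X}\cdot Z_{i,j} = (-K_X\cdot \bar Z_{i,j}) - \tfrac12(E_{q_i}\cdot Z_{i,j}) = 1 - \tfrac12\cdot 1$... so $-K_{\widetilde X}\cdot Z_{i,j}=0$, whence $\deg\N = -2$, forcing $\N_{Z_{i,j}/\widetilde X}\cong\O(a)\oplus\O(-2-a)$. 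To pin down $a=-1$ I would argue that $Z_{i,j}$ deforms in a $1$-parameter family inside $\widetilde X$ — namely the nearby conics $C_r$ of Proposition \ref{prop:fibres} as $r$ varies, equivalently nearby fibres of $\epsilon$ degenerating — which forces $h^0(\N_{Z_{i,j}/\widetilde X})\geq 1$, ruling out $\O(-k)\oplus\O(k-2)$ with $k\geq 2$; combined with $\deg=-2$ and the flopping contractibility of $Z_{i,j}$ (it is a fibre component that can be contracted to the base point $s_i$, and flopping curves on smooth threefolds have normal bundle $\O(-1)^{\oplus 2}$ or $\O\oplus\O(-2)$ or $\O(1)\oplus\O(-3)$), the balanced case $\O(-1)\oplus\O(-1)$ is the only survivor.

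\textbf{Main obstacle.} The delicate point is excluding $\N_{Z_{i,j}/\widetilde X}\cong\O\oplus\O(-2)$: both have degree $-2$ and both are flopping-type, so the numerical computation alone does not decide. I would resolve this by a local analytic model of the Kawamata blow-up: near $q_i$, the map $\epsilon$ upstairs exhibits the conics $C_{i,j}$ as four sections of a local $\P^1$-bundle-like structure meeting the exceptional $\P^2$ at four distinct points (Remark \ref{remark:curves}), and a direct local coordinate computation of $\N_{Z_{i,j}}$ near its intersection with $E_{q_i}$ — using the explicit weights $\tfrac12(1,1,1)$ and the Taylor expansion $u_i = \widetilde Q_i(u_4,u_5,u_6)+\cdots$ from the proof of Proposition \ref{proposition:free} — will show the normal bundle is \emph{not} trivial on any subinterval, i.e. it has no nowhere-vanishing section, which forces the balanced splitting. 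This local computation, while not conceptually hard, is where the real work lies.
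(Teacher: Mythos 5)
Your degree computation lands on the right number ($\deg\N_{Z_{i,j}/\widetilde{X}}=-2$), although the bookkeeping en route is off: each conic $C_{i,j}$ lies in $\mathrm{Bs}\,\L_{p_1,p_i}$ and therefore passes through \emph{both} $p_1$ and $p_i$, so $Z_{i,j}$ meets both $E_{q_1}$ and $E_{q_i}$; your formula $1-\tfrac12\cdot 1$ gives $\tfrac12$, not $0$, and only the corrected $1-\tfrac12-\tfrac12=0$ yields an integer degree. The serious problem is the step that distinguishes $\O_{\P^1}(-1)\oplus\O_{\P^1}(-1)$ from $\O_{\P^1}\oplus\O_{\P^1}(-2)$. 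Your deformation argument is backwards: a curve moving in a one-parameter family has $h^0(\N_{Z_{i,j}/\widetilde{X}})\geq 1$, and since $h^0(\O_{\P^1}(-k)\oplus\O_{\P^1}(k-2))=k-1$, the condition $h^0\geq 1$ is satisfied precisely by the \emph{unbalanced} cases $k\geq 2$ and violated by the balanced case $k=1$; so your premise, if true, would disprove the lemma. (In fact $Z_{i,j}$ does not move: the conics $C_{r,j}$ for varying $r$ are finitely many distinct fibre components over the eight points $s_r$, not a continuous family, and rigidity of $Z_{i,j}$ is exactly what has to be proved.) The one remaining route you offer --- a local analytic computation near $E_{q_i}$ ruling out a nowhere-vanishing section --- is precisely the part you leave undone, and it is the entire content of the lemma; as written the proposal does not establish the statement.

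The paper sidesteps the case analysis with a global argument you could adopt: a general $S\in|\kappa^*H_{q_1,q_i}-E_{q_1}-E_{q_i}|$ contains $Z_i$ with $Z_i=S|_S$, and intersecting with $(\kappa^*D-E_{q_1}-E_{q_i})|_S$ for $D\in|H|$ avoiding $q_1$ and $q_i$ gives $Z_{i,j}^2=1-1-1=-1$ on $S$, hence $\N_{Z_{i,j}/S}\cong\O_{\P^1}(-1)$; since $Z_{i,j}$ is locally the transverse intersection of two general such surfaces $S_1$, $S_2$, one obtains $\N_{Z_{i,j}/\widetilde{X}}\cong\N_{Z_{i,j}/S_1}\oplus\N_{Z_{i,j}/S_2}\cong\O_{\P^1}(-1)\oplus\O_{\P^1}(-1)$ directly. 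If you prefer to keep your adjunction computation, you must still supply $h^0(\N_{Z_{i,j}/\widetilde{X}})=0$, and the cleanest way to do so is again via these surfaces.
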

\begin{proof}
	Let $\H_{q_1,q_i}\subset \H_{q_1}$ be the linear subsystem of sections of $\H_{q_1}$ vanishing additionally at $q_i$. Assume $H_{q_1,q_i}$ is a general divisors from $\H_{q_1,q_i}$. Let $S \in |\kappa^*H_{q_1,q_i} - E_{q_1} -E_{q_i}|$ be a general surface. Then $S$ contains the curve  $Z_i$ by construction and $Z_i=S|_S$. Further, suppose that $D\in |H|$ is a surface that does not pass through $q_1$ and $q_i$. Then 
	\[
		 (\kappa^*D - E_{q_1} -E_{q_i})|_S\sim S|_S.
	\]
	Observe that 
	\[
		Z_{i,j} \cdot (\kappa^*D - E_{q_1} -E_{q_i})|_S=1-1-1=-1,
	\]
	for all $j$. Hence,
	\[
	-1 = Z_{i,j} \cdot (\kappa^*D - E_{q_1} -E_{q_i})|_S= Z_{i,j}\cdot Z_i =Z_{i,j}^2.
	\]
	Since $Z_{i,j}\cong\P^1$, then $\N_{Z_{i,j}/S}\cong \O_{\P^1}(-1)$. Notice that locally $Z_{i,j}$ is a complete intersection of two general elements $S_1$, $S_2\in |\kappa^*H_{q_1,q_i} - E_{q_1} -E_{q_i}|$. Consequently, 
	\[
		\N_{Z_{i,j}/\widetilde{X}}\cong \N_{Z_{i,j}/S_1}\oplus\N_{Z_{i,j}/S_2}\cong\O_{\P^1}(-1)\oplus \O_{\P^1}(-1).
	\]
\end{proof}
So, by Lemma \ref{lemma:normal}, flops of the curves $Z_{i,j}$ are the Atiyah flops, i.e. the blow-up of each curve $Z_{i,j}$ and the contraction of the exceptional divisor isomorphic to $\P^1\times \P^1$ along another ruling class. 

Denote by $\psi\colon \widetilde{X}\dashrightarrow U$ the composition of four Atiyah flops of $Z_{i,j}$ and by $g\colon Y\to \P^2$ the blow-up of $s_i$ with the exceptional divisor $B\cong \P^1$. And finally, let $\xi=g^{-1}\circ f \circ \psi^{-1}$ be a rational map $U\dashrightarrow Y$. Therefore, we have the following commutative  diagram:
\[
	\xymatrix{
		&X \ar@{-->}[d]_{\phi_{\H_{q_1}}} && \widetilde{X} \ar[ll]_{\kappa} \ar@{-->}[rr]^{\psi} \ar[dll]_{f} && U \ar@{-->}[dll]^{\xi} \\
		&\P^2&&Y \ar[ll]_{g}&&
	}
\]

\begin{proposition}\label{prop:construction}
	The map $\xi$ is a morphism and $\xi (\psi(E_{q_i}))=B$. Moreover, the restriction  $\xi|_{\psi(E_{q_i})}\colon \psi(E_{q_i})\to B$ is a conic bundle with three singular fibres.
\end{proposition}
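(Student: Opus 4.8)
The plan is to analyze the situation locally near a single $\kappa$-exceptional divisor $E_{q_i}$ and trace through what the four Atiyah flops do to the fibre $f^{-1}(s_i) = Z_{i,1}\sqcup Z_{i,2}\sqcup Z_{i,3}\sqcup Z_{i,4} \cup E_{q_i}$. First I would recall that $E_{q_i}\cong\P^2$ (since $q_i$ is a $\tfrac12(1,1,1)$ point and $\kappa$ is its Kawamata blow-up), and that each curve $Z_{i,j}$ meets $E_{q_i}$ in the fibre; by Remark \ref{remark:curves} the $Z_{i,j}$ are pairwise disjoint and each meets $E_{q_i}$ transversally. An Atiyah flop of $Z_{i,j}$ replaces a neighbourhood of $Z_{i,j}$; on a divisor like $E_{q_i}$ that meets $Z_{i,j}$ transversally at one point, the effect on the strict transform $\psi(E_{q_i})$ is an ordinary blow-up at that point (the flopped curve's exceptional $\P^1\times\P^1$ meets $\psi(E_{q_i})$ in a $(-1)$-curve). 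So $\psi(E_{q_i})$ is $E_{q_i}\cong\P^2$ blown up at the four points $E_{q_i}\cap Z_{i,j}$, $j=1,\dots,4$; I should check these four points are in general position (no two equal, no three collinear, using again the smoothness of $V$ as in Remark \ref{remark:curves}), so $\psi(E_{q_i})$ is a smooth del Pezzo-like rational surface, a $\P^2$ blown up in four points.

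Next I would identify the map $\xi$ on $\psi(E_{q_i})$. On $\widetilde X$ the composite $f$ contracts $E_{q_i}$ to the single point $s_i\in\P^2$; after blowing up $s_i$ via $g\colon Y\to\P^2$ with exceptional $B\cong\P^1$, the rational map $\xi$ should send $\psi(E_{q_i})$ onto $B$. To see $\xi$ is a morphism and compute $\xi|_{\psi(E_{q_i})}$, note that $f$ restricted near $E_{q_i}$ is, in suitable coordinates on the base, given by the pencil of conics cutting out the $Z_{i,j}$'s — concretely, on $V$ near $p_i$ the map $\phi_{\L_{p_i}}$ is given by the quadrics $\widetilde Q_i(u_4,u_5,u_6)$ (from Proposition \ref{proposition:free}), so the "direction along $E_{q_i}$" is recorded precisely by these conics. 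The linear system on $\psi(E_{q_i}) = \mathrm{Bl}_4\P^2$ that defines $\xi|_{\psi(E_{q_i})}$ is therefore the pencil of conics through the four blown-up points pulled back from $E_{q_i}\cong\P^2$ — i.e. the anticanonical-type pencil $|{-K} + \text{correction}|$, more simply the strict transform of a pencil of plane conics through $4$ general points. Such a pencil on $\P^2$ through $4$ general points has base locus exactly those $4$ points, and after blowing them up becomes base-point free, giving a morphism to $\P^1$ whose generic fibre is the strict transform of a conic (genus $0$), hence a conic bundle structure $\psi(E_{q_i})\to B\cong\P^1$.

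It then remains to count singular fibres. A pencil of conics in $\P^2$ through $4$ general points has $3$ reducible (degenerate) members — the three pairs of lines joining the four points in the $\binom{4}{2}/\!\sim = 3$ ways (the three ways to split $4$ points into two pairs). After the blow-up at the four points, each such reducible conic becomes a pair of $(-1)$-curves meeting at one point, i.e. a degenerate fibre of the conic bundle $\psi(E_{q_i})\to B$. Generically these are the only singular fibres and there are no double lines, so the conic bundle has exactly three singular fibres, as claimed. I would double-check there are no further degenerations coming from the three lines $C_{r,j}$ configuration on $V$, but genericity of $V$ (invoked already in Proposition \ref{prop:fibres}) rules this out. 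Finally, $\xi(\psi(E_{q_i})) = B$ because $f(E_{q_i}) = \{s_i\}$ and $g$ is the blow-up of exactly $s_i$, so the strict transform maps onto the exceptional $\P^1$.

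The main obstacle I expect is the bookkeeping that an Atiyah flop of $Z_{i,j}$ acts on the transverse divisor $E_{q_i}$ as a genuine smooth blow-up at one point (rather than something more complicated, e.g. if $Z_{i,j}$ were tangent to $E_{q_i}$ or two curves met $E_{q_i}$ at the same point): this is exactly where Lemma \ref{lemma:normal} (the normal bundle is $\O(-1)\oplus\O(-1)$, so the flop is Atiyah) and Remark \ref{remark:curves} (the relevant curves are disjoint, forced by smoothness of $V$) get combined, and one has to verify the four intersection points $E_{q_i}\cap Z_{i,j}$ are honestly distinct and in general position on $E_{q_i}\cong\P^2$. Once that local picture is nailed down, identifying the resulting linear system with a pencil of plane conics through $4$ general points, and hence reading off "three singular fibres", is essentially the classical geometry of such pencils.
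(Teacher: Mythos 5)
Your proposal is correct and follows essentially the same route as the paper: both arguments rest on Proposition \ref{proposition:free} giving the four points $E_{q_i}\cap Z_{i,j}$ in general position on $E_{q_i}\cong\P^2$, on the Atiyah flops turning $E_{q_i}$ into its blow-up at those four points (a del Pezzo surface of degree $5$), and on the pencil of conics through them providing the conic bundle over $B\cong\P^1$ with its three reducible members. The only substantive difference is that the paper verifies base-point-freeness of the defining pencil on the threefold resolution $W$ of the flop (checking the flop-exceptional divisors $G_{i,j}$ separately, via transversality of the elements of $\H_{q_1}$ along $Z_{i,j}$), whereas you check it only on the surface $\psi(E_{q_i})$ --- which does suffice because the flopped curves, and hence the whole indeterminacy locus of $\xi$, lie inside $\psi(E_{q_i})$, but that containment is the one point you should state explicitly.
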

\begin{proof}
	Denote by $\omega_1$ the blow-up of $Z_{i,1}$, $Z_{i,2}$, $Z_{i,3}$ and $Z_{i,4}$. Then there is the following commutative diagram for the flop $\psi$:
\[
	\xymatrix{
		&W\ar[ld]_{\omega_1} \ar[rd]^{\omega_2}&\\
	\widetilde{X}\ar@{-->}[rr]^{\psi}&&U
	}
\]
Let $G_{i,j}\cong \P^1\times \P^1$ be the $\omega_1$-exceptional divisor over $Z_{i,j}$ and let $\ell_1^{i,j}$, $\ell_2^{i,j}$ be the ruling classes of $G_{i,j}$ contracted by $\omega_2$, $\omega_1$ respectively. Then
\[
	G_{i,j}|_{G_{i,j}}= -\l_1^{i,j}-\l_2^{i,j}.
\]

Let $\mathcal{D}$ be the pencil of curves on $Y$ that are proper transforms of lines on $\P^2$ passing through the point $s_i$. Note that the class $D+R$, where $D\in \mathcal{D}$ and $R\in|g^*\O_{\P^2}(1)|$ is very ample on $Y$, and the proper transform of $|g^*\O_{\P^2}(1)|$ on $U$ is base point free. Therefore, in order to prove that $\xi\colon U\dashrightarrow Y$ is a morphism, it is enough to show that the proper transform $\mathcal{D}_{U}$ of $\mathcal{D}$ on $U$ is base point free.

Let's first consider the proper transform $\mathcal{D}_W$ of $\mathcal{D}$ on $W$. By construction, the base locus of $\mathcal{D}_W$ is contained in
\[
G_{i,1}\cup G_{i,2}\cup G_{i,3} \cup G_{i,4} \cup E_{q_i}^W,
\]
where $E_{q_i}^W$ is the strict transform of $E_{q_i}$ on $W$. One has 
\[
\mathcal{D}_W\sim (f\circ\omega_1)^*\O_{\P^2}(1)-\sum_{j=1}^4 G_{i,j} -E_{q_i}^W.
\]
Since $E_{q_i}^W|_{G_{i,j}}=\l_2^{i,j}$, then we have $\mathcal{D}_W|_{G_{i,j}}=\l_1^{i,j}$. Consequently, either $\mathrm{Bs}\mathcal{D}_W \cap G_{i,j}$ is empty or all elements of $\mathcal{D}_W$ intersect by the same ruling class $\l_1^{i,j}$. However, the later case is impossible, since the proper transforms of elements of $\H_{q_1}$ on $\widetilde{X}$ intersect transversely at a general point of $Z_{i,j}$.

So, it remains to consider the base locus of $\mathcal{D}_W$ on $E_{q_i}^W$. However, Proposition \ref{proposition:free} implies that the restriction of the linear system   $|\kappa^*H_{q_1} - E_{q_1} -E_{q_i}|$ on $E_{q_i}$ is a linear system of conics on $E_{q_i}$ with four base points that are exactly $E_{q_i}\cap Z_{i,j}$. These points are in general position, because they are determined by the intersection of two conics from the linear system $\langle Q_1(x_4,x_5,x_6),Q_2(x_4,x_5,x_6),Q_3(x_4,x_5,x_6)\rangle$. But we have already proved that there is no base locus of $\mathcal{D}_W$ on $G_{i,j}$. Thereby, $\mathrm{Bs}\mathcal{D}_W=\varnothing$.

Finally, we notice that the restriction of $\mathcal{D}_W$ on $G_{i,j}$ is contained in the fibres of $\omega_2$, consequently $\mathcal{D}_{U}$ is also base point free, so $\xi$ is indeed a morphism.

To prove the last assertion it is enough to see that $\psi(E_{q_i}^W)$ is a del Pezzo surface of degree $5$.
\end{proof}

\begin{remark}
	Notice that a similar construction as in Proposition \ref{prop:construction} can be applied to $V$. So, there is a diagram:
\[
	\xymatrix{
		&&&&W'\ar[dl]_{\omega'_1}\ar[dr]^{\omega'_2}\\
		&V\ar[d]_{\pi}&&\widetilde{V}\ar[ll]_{\widetilde{\kappa}}\ar[d]_{\widetilde{\pi}}&W\ar[dl]_{\omega_1}\ar[dr]^{\omega_2}&U'\ar@{->}@/^2pc/[dd]^{e}\\
		&X \ar@{-->}[drr]_{\phi_{\H_{q_1}}} && \widetilde{X} \ar[ll]_{\kappa} \ar@{-->}[rr]^{\psi} \ar[d]_{f} && U \ar@{->}[d]_{\xi} \\
		&&&\P^2&&Y \ar[ll]_{g}&&
	}
\]
where 
\begin{itemize}
	\item $\widetilde{\kappa}$ is the blow-up of $p_1,\ldots ,p_8$;
	\item $\omega_1'$ is  the blow-up of $\widetilde{\kappa}_*^{-1}(C_{i,1}), \ldots, \widetilde{\kappa}_*^{-1}(C_{i,4})$. Notice, that the $\omega_1$-exceptional divisor over $\widetilde{\kappa}_*^{-1}(C_{i,j})$ is isomorphic to $\P^1\times\P^1$, since on has  
		\[
			\N_{\widetilde{\kappa}_*^{-1}(C_{i,j})}\cong\O_{\P^1}(-2)\oplus \O_{\P^1}(-2)
		\]
		by Lemma \ref{lemma:normal};
	\item $\omega_2'$ is the contraction of the exceptional divisors $\P^1\times \P^1$ along another ruling class to a singular variety $U'$.
	\item $e$ is a morphism, whose fibre is an elliptic curve.
\end{itemize} 
\end{remark}

Thus, applying Proposition \ref{prop:construction} to seven $\kappa$-exceptional divisors $E_{q_i}$ for $i\neq 1$ we obtain a standard conic bundle $\chi \colon \mathfrak{X}\to B$, where $B$ is the blow-up of $\P^2$ in seven points $s_i$. More precisely, one has the following diagram:
\[
	\xymatrix{
		&X \ar@{-->}[d]_{\phi_{\H_{q_1}}} && \widetilde{X} \ar@{->}[ll]_{\kappa} \ar@{-->}[rr]^{\Psi} \ar[dll]_{f} && \mathfrak{X} \ar@{->}[dll]^{\chi} \\
		&\P^2&&B \ar[ll]_{h}&&
	}
\]
where $\Psi$ is a composition of Atiyah flops in curves $Z_{i,j}$ and $h$ is the blow-up of $s_2,\ldots,s_8$.

Let $C$ be the degeneration curve of $f\colon \widetilde{X}\to \P^2$. We want to compute $\deg C$. We write 
\begin{align*}
&\widetilde H_{q_1}\sim_{\Q} \kappa^*H-E_{q_1};\\
&K_{\widetilde{X}}=\kappa^*K_X+\tfrac{1}{2}\sum E_i.
\end{align*}
So,
\[
K_{\widetilde{H}_{q_1}}^2=(K_{\widetilde{X}}+\widetilde{H}_{q_1})^2\cdot\widetilde{H}_{q_1}=-\tfrac{1}{4}E_{q_1}^3=-1.
\]
Hence, by Noether formula $\deg C=9$. 

Now, let's summarise our generality assumptions on $V$. Considering general $V$, we can assume that:
\begin{enumerate}
	\item the points $s_2,\ldots,s_8$ are in general position, i.e. no three of them lie on a line and no six of them lie on a conic;
	\item the base locus $\mathrm{Bs}\L_{p_1,p_i}$ consists of four smooth conics for all $i\neq 1$. Hence, the curve $C$ has ordinary triple points at $s_2,\ldots,s_8$, and the flops $\psi$ are exactly Atiyah flops.
\end{enumerate}

 Consequently, under the generality conditions, one has that $B$ is the del Pezzo surface of degree $2$, the degeneration curve of $\chi$ is $\Delta = h_*^{-1}(C)$ and $\Delta\sim -3K_B$.
 
Now we are ready to prove the main result of this section.

\begin{theorem}\label{non-rational1}
In our generality assumptions $X$ is not rational.	
\end{theorem}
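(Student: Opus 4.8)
The plan is to derive a contradiction with the Clemens--Griffiths criterion using the standard conic bundle $\chi\colon\mathfrak{X}\to B$ constructed above. Recall that $\mathfrak{X}$ is smooth and birational to $X$: the threefold $\widetilde{X}$ dominates $X$ via the composition $\kappa$ of Kawamata blow-ups and is joined to $\mathfrak{X}$ by the sequence of Atiyah flops $\Psi$. Hence, if $X$ were rational, so would be $\mathfrak{X}$, and by Clemens--Griffiths its intermediate Jacobian $\mathrm{J}(\mathfrak{X})$ would then be isomorphic, as a principally polarised abelian variety, to a product of Jacobians of smooth curves. On the other hand, by Beauville's identification of the intermediate Jacobian with the Prym variety \cite{beauville1977varietes} one has $\mathrm{J}(\mathfrak{X})\cong\Prym(\widetilde{\Delta},\Delta)$, where $\Delta=h_*^{-1}(C)\sim -3K_B$ is the degeneration curve of $\chi$ and $\widetilde{\Delta}\to\Delta$ the associated double cover; here $\Delta$ is smooth, being the strict transform of the plane nonic $C$ whose only singularities are the ordinary triple points at $s_2,\dots,s_8$, each resolved by a single blow-up under $h$, so that $\widetilde{\Delta}\to\Delta$ is in fact \'etale. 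Therefore it suffices to show that $\Prym(\widetilde{\Delta},\Delta)$ is not a product of Jacobians of smooth curves, and by Theorem \ref{theorem:prym} this amounts to checking that $\Delta$ is neither hyperelliptic, nor trigonal, nor quasi-trigonal, nor a smooth plane quintic with double cover defined by an even theta-characteristic.

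Two of these four possibilities are excluded at once. A quasi-trigonal curve is singular by definition, while $\Delta$ is smooth. And $\Delta$ has genus $7$: either because $C$ has degree $9$ with exactly seven ordinary triple points, so $g(\Delta)=\binom{8}{2}-3\cdot 7=7$, or by adjunction on $B$, namely $2g(\Delta)-2=\Delta\cdot(\Delta+K_B)=(-3K_B)\cdot(-2K_B)=6K_B^2=12$; in particular $\Delta$ is not a plane quintic, which has genus $6$. It thus remains to prove that $\Delta$ is not hyperelliptic and not trigonal, equivalently that $\Delta$ has gonality at least $4$; this is the crucial point.

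To establish this I would use the anticanonical double cover $\varphi\colon B\to\P^2$ branched over a smooth quartic. Since $-K_B-\Delta\sim 2K_B$ is not effective, restriction gives an inclusion $H^0(B,-K_B)\hookrightarrow H^0(\Delta,(-K_B)|_{\Delta})$, and in particular $\varphi$ restricts to a finite morphism $\varphi|_{\Delta}\colon\Delta\to\P^2$. For general $V$ the curve $\Delta$ is not invariant under the Geiser involution of $B$ (the invariant members of the $12$-dimensional system $|-3K_B|$ form only the $9$-dimensional subsystem pulled back from plane cubics), so $\varphi|_{\Delta}$ is birational onto its image, a plane sextic $\overline{\Delta}$ of degree $(-K_B)\cdot\Delta=6$; comparing arithmetic and geometric genus, $\overline{\Delta}$ has exactly three nodes, which for general $V$ are in general position. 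Finally one checks that the normalisation of a three-nodal plane sextic with nodes in general position has gonality exactly $4$: projection from a node realises a $g^1_4$, while a $g^1_3$ would be incompatible with the geometry of the canonical embedding (cut out on $\overline{\Delta}$ by the cubics through the three nodes) for general nodes. Hence $\Delta$ is neither hyperelliptic nor trigonal, and the non-rationality of $X$ follows. (Alternatively, one may invoke Shokurov's non-rationality criterion for standard conic bundles over rational surfaces, noting that $2K_B+\Delta\sim -K_B$ is not only effective but base-point-free, so the ampleness of $-K_B$ rules out all exceptional configurations; see \cite{shokurov1984prym}, \cite{prokh_conics}.)

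The steps using Clemens--Griffiths, Beauville's theorem, and Theorem \ref{theorem:prym} are applied as black boxes, and the exclusion of the quasi-trigonal and plane-quintic cases is immediate from smoothness and the genus count. The main obstacle is the last step, ruling out that $\Delta$ is hyperelliptic or trigonal. Concretely it splits into (i) the gonality computation for a three-nodal plane sextic with nodes in general position, and (ii) checking that the curves $\Delta$ arising from Fano--Enriques threefolds are general enough in the relevant sense: a priori they sweep out only a subfamily of $|-3K_B|$ as $B$ and the quadrics $P_i,Q_i$ vary, and it is precisely here that the generality hypotheses (1) and (2) on $V$ listed above are needed.
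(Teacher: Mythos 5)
Your overall strategy coincides with the paper's: pass to the standard conic bundle $\chi\colon\mathfrak{X}\to B$, identify $\mathrm{J}(\mathfrak{X})$ with $\Prym(\widetilde{\Delta},\Delta)$, and use Theorem \ref{theorem:prym} to reduce non-rationality to showing that $\Delta$ is not hyperelliptic, trigonal or quasi-trigonal. Your disposal of the quasi-trigonal case (a quasi-trigonal curve is by definition singular, while $\Delta$ is smooth) and of the plane-quintic case (genus $7\neq 6$) is fine and in fact slightly more direct than the paper. The problem is that the step you yourself identify as the crux --- excluding a $g^1_2$ and a $g^1_3$ on $\Delta$ --- is not actually carried out. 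Your route via the Geiser double cover $\varphi\colon B\to\P^2$ leaves two genuine gaps: (i) the birationality of $\varphi|_{\Delta}$ rests on a dimension count in $|-3K_B|$, but the curves $\Delta$ produced by the construction sweep out only some subfamily of $|-3K_B|$, so ``a general member of $|-3K_B|$ is not Geiser-invariant'' does not by itself apply to a general $X$; and (ii) the assertion that the normalisation of a $3$-nodal plane sextic with nodes in general position carries no $g^1_3$ is exactly the statement to be proved, and ``incompatible with the geometry of the canonical embedding'' is not an argument. The parenthetical appeal to Shokurov's criterion does not rescue this either, since that criterion is established for conic bundles over $\P^2$ and Hirzebruch surfaces, not over a degree-$2$ del Pezzo surface.

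The paper closes this gap by an argument that needs no generality beyond the smoothness of $\Delta$. From $H^0(B,K_B)=H^1(B,K_B)=0$ and adjunction one gets that the restriction $H^0(B,-2K_B)\to H^0(\Delta,K_\Delta)$ is an isomorphism, so the canonical model of $\Delta$ sits inside $\phi_{|-2K_B|}(B)\subset\P^6$. Since $-2K_B$ is very ample, $\Delta$ is not hyperelliptic. If $\Delta$ were trigonal (or quasi-trigonal), geometric Riemann--Roch would force the canonical image to admit a trisecant line $\ell$; but $\phi_{|-2K_B|}(B)$ is cut out by quadrics, so $\ell$ would lie on it, which is impossible because $-2K_B\cdot C$ is even for every curve $C\subset B$ and hence the bi-anticanonical image contains no lines. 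You should replace your plane-sextic computation with an argument of this completeness; as it stands, the heart of the proof is missing.
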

\begin{proof}
	By Theorem \ref{theorem:prym} it is enough to show that the curve $\Delta \subset B$ is neither hyperelliptic, trigonal nor quasi-trigonal.
	
	Notice that $-2K_B$ is very ample. Since $B$ is smooth and $\Delta$ is a Cohen-Macaulay scheme, then we can use the adjunction formula for the dualising sheaf of $\Delta$, that we will denote as $K_{\Delta}$. Therefore, $K_\Delta= (K_B+\Delta)|_{\Delta}=-2K_B|_{\Delta}$, and hence $K_\Delta$ is very ample. Thus, $\Delta$ is not hyperelliptic, otherwise $K_\Delta$ would give a finite morphism of degree $2$.
	
	Let us proof that $\Delta$ is not trigonal. Assume the contrary. Then the image  $\phi_{|K_\Delta|}(\Delta)\subset \P^6$ would have a $3$-section, i.e. a line $\l$ in $\P^6$ such that $|\l \cap \phi_{|K_\Delta|}(\Delta)|=3$. Since one has $H^0(B,\O_B(K_B))=H^1(B,\O_B(K_B))=0$, then the following exact sequence of sheaves
	\[
		\begin{CD}
			0 @>>>\O_B(K_B) @>>> \O_B(-2K_B) @>>> \O_{\Delta}(-2K_B) @>>> 0
		\end{CD}
	\]
	implies that the restriction morphism of global sections 
	\[
		\begin{CD}
		H^0(B,\O_B(-2K_B))	@>>>H^0(\Delta,\O_\Delta(K_{\Delta}))
		\end{CD}
	\]
	is an isomorphism. Consequently, $\phi_{|-2K_B|}(\Delta)=\phi_{|K_\Delta|}(\Delta)$. Now one can show that the image of $\phi_{|-2K_B|}\colon B\hookrightarrow \P^6$ is contained in an intersection of quadrics. Since, as we have noticed, $\phi_{|-2K_B|}(\Delta)=\phi_{|K_\Delta|}(\Delta)$, then $\l\subset \phi_{|-2K_B|}(B)$. However, $\phi_{|-2K_B|}(B)$ obviously does not contain any lines. Contradiction.
	
	Finally, we claim that $\Delta$ is not quasi-trigonal. Indeed, if $\Delta$ was quasi-trigonal, then the image $\phi_{|K_\Delta|}(\Delta)$ would have a $3$-section as well, e.g. \cite[Remark 7.5.1]{prokh_conics}. Contradiction as above.
	\end{proof}

\subsection{An Explicit Example}
Notice that all required generality assumptions on $V$ are Zariski open. Thus, it is enough to provide an example of such a variety $V$, satisfying our assumptions, to finish the proof of the non-rationality of a general $X$ from this family.

Consider $V\subset \P^6$ given by 
\[
	\begin{cases}
		40x_0x_1 + 55x_1x_2 + 12x_1x_3 +3x_2x_3 +x_4^2 + x_5 x_6=0,\\
		12x_0x_2 + 7x_1x_2 + 4x_1x_3 - 9x_2x_3 + x_5^2 + x_4 x_6=0,\\
		5x_1x_2 + 8x_0x_3 + 4x_1x_3 + 9x_2x_3 + x_6^2 +x_4 x_5=0,
	\end{cases}
\]
and the involution $\sigma$ is as in Section \ref{invol}. Then, the $\sigma$-invariant points are the following:
\begin{align*}
		&p_1=(1,0,0,0,0,0,0);\\
		&p_2=(0,1,0,0,0,0,0);\\
		&p_3=(0,0,1,0,0,0,0);\\
		&p_4=(0,0,0,1,0,0,0);\\
		&p_5=(-1,1,1,-1,0,0,0);\\
		&p_6=(2,3,-2,3,0,0,0);\\
		&p_7=(-3,3,1,5,0,0,0);\\
		&p_8=(21/2,9,-10,15,0,0,0).
\end{align*}
We keep the notation from Section \ref{sect}. Let $\pi(p_1)={q_1}$. Now we will explicitly describe the degeneration curve $\Delta$.

By the construction $\L_{p_1}=\langle x_1,x_2,x_3\rangle|_V$. So, in an appropriate affine chart, the section of $\L_{p_1}$ can given by $x_2=\lambda x_1$ and $x_3=\mu x_1$, where $\lambda$, $\mu \in \mathbb{C}$. Thereby, in the affine chart where $x_1\neq 0$, a general fibre of $\phi_{\L_{p_1}}$ is determined by the following equations: 
\[
	\begin{cases}
		40u_0 + 55\lambda + 12\mu  +3\lambda \mu  +u_4^2 + u_5 u_6=0,\\
		12u_0\lambda  + 7\lambda  + 4\mu  - 9\lambda \mu  + u_5^2 + u_4 u_6=0,\\
		5\lambda  + 8u_0\mu  + 4\mu  + 9\lambda \mu  + u_6^2 +u_4 u_5=0,
	\end{cases}
\]
where $(u_0,u_4,u_5,u_6)$ are affine coordinates. Eliminating $u_0$ from the first equation, we have
\[
	\begin{cases}
		7\lambda  - 33/2\lambda ^2 + 4\mu - 63/5\lambda \mu - 9/10 \lambda ^2\mu - 3/10 \lambda  (u_4^2 + u_5 u_6) + u_5^2 + u_4 u_6=0,\\
		5\lambda  + 4 \mu - 2 \lambda  \mu - 12/5 \mu^2 - 3/5 \lambda  \mu^2 - 
 1/5\mu(u_4^2 + u_5 u_6) + u_6^2 +u_4 u_5=0.
	\end{cases}
\]
Let's homogenise the coordinates $u_i$, i.e. let $u_i=y_i/y_1$, where $y_i$ are the homogeneous coordinates. Then
\[
	\begin{cases}
		\left(7\lambda  - 33/2\lambda ^2 + 4\mu - 63/5\lambda \mu - 9/10 \lambda ^2\mu\right)y^2_1 - 3/10 \lambda  (y_4^2 + y_5 y_6) + y_5^2 + y_4 y_6=0,\\
		\left(5\lambda  + 4 \mu - 2 \lambda  \mu - 12/5 \mu^2 - 3/5 \lambda  \mu^2\right)y^2_1 - 
 1/5\mu(y_4^2 + y_5 y_6) + y_6^2 +y_4 y_5=0.
	\end{cases}
\]
Thus, we have defined a general fibre of $\phi_{\L_{p_1}}$ as an elliptic curve embedded into $\P^3$ with coordinates $y_1,y_4,y_5,y_6$ depending on a point $(\lambda,\mu)\in \mathbb{C}^2$. Finally, eliminating $y_1^2$, we find the equation of the quotient of a general fibre of $\phi_{\L_{p_1}}$ by $\sigma$, i.e. the equation of a general fibre of $f\colon \widetilde{X}\to \P^2_{(x_1:x_2:x_3)}$. Homogenising $\lambda$ and $\mu$ back, we have the following equation of a general fibre of $f$:
\begin{align*}\label{fiber}
&\left (3 x_ 2 + 2 x_ 3 \right) \left (5 x_ 1 x_ 2 - 4 x_ 1 x_ 3 + 
   9 x_ 2 x_ 3 \right)\left(y_4^2 + y_5 y_6\right) + \\
 &2 \left (3 x_ 3 -5 x_ 1 \right) \left (5 x_ 1 x_ 2 + 4 x_ 1 x_ 3 + 
   x_ 2 x_ 3 \right)\left(y_5^2 + y_4 y_6\right) +\\
 & \left(-9 x_ 2^2 x_ 3 + 10 x_ 1^2 \left (7 x_ 2 + 4 x_ 3 \right) - 
  3 x_ 1 x_ 2 \left (55 x_ 2 + 42 x_ 3 \right) \right)\left( y_6^2 +y_4 y_5\right)=0.
 \end{align*}
 
	Direct computations from the above equation show that the degeneration curve $C\subset \P^2$ of $f$ is given by:
\begin{align*}
&-109000x_1^6x_2^3 + 1081500x_1^5x_2^4 - 2549250x_1^4x_2^5 + 
 2244375x_1^3x_2^6 - 144000x_1^6x_2^2x_3\\
&+ 2074600x_1^5x_2^3x_3 - 5674500x_1^4x_2^4x_3 + 5284350x_1^3x_2^5x_3 + 358425x_1^2x_2^6x_3 - 48000x_1^6x_2x_3^2 \\
&+ 1278400x_1^5x_2^2x_3^2 -4514680x_1^4x_2^3x_3^2 + 4652580x_1^3x_2^4x_3^2 + 
 312930x_1^2x_2^5x_3^2 + 3645x_1x_2^6x_3^2 \\
&+ 214400x_1^5x_2x_3^3 - 1482560x_1^4x_2^2x_3^3 + 2009592x_1^3x_2^3x_3^3 - 
 322956x_1^2x_2^4x_3^3 - 85374x_1x_2^5x_3^3\\
 &- 9477x_2^6x_3^3 - 25600x_1^5x_3^4 - 140800x_1^4x_2x_3^4 + 551328x_1^3x_2^2x_3^4 - 411984x_1^2x_2^3x_3^4 - 113616x_1x_2^4x_3^4\\
 & - 23328x_2^5x_3^4 + 
 15360x_1^4x_3^5 + 99072x_1^3x_2x_3^5 - 115776x_1^2x_2^2x_3^5 - 
 31104x_1x_2^3x_3^5 - 15552x_2^4x_3^5 \\
 &- 6656x_1^3x_3^6 - 
 6912x_1^2x_2x_3^6 + 2592x_1x_2^2x_3^6 - 3024x_2^3x_3^6=0.
\end{align*}
Indeed, the curve obtained above is irreducible, has the same degree as $C$ and coincides with $C$ on the affine patch by the construction. Hence it coincides with $C$ everywhere.

Now direct computations show that
\begin{itemize}
	\item the points $s_i$ have the following coordinates: $(1,0,0)$, $(0,1,0)$, $(0,0,1)$, $(1,1,-1)$, 
	$(3,-2,3)$, $(3,1,5)$, $(9,-10,15)$, and they are in general position;
	\item the curve $C$ is singular only at the points  $s_i$, and these points are ordinary triple points of $C$.
\end{itemize}
Consequently, after the blow-up $h\colon U \to \P^2$ of the points $s_2,\ldots, s_8$, we get a smooth curve $\Delta=h_*^{-1}(C)\in |-3K_U|$ on the del Pezzo surface $U$ of degree $2$, which is the degeneration curve of a standard conic bundle $\chi\colon \mathfrak{X}\to U$. Thereby, $X$ is not rational by Theorem \ref{non-rational1}. Hence, a general member of this deformation family is also  not rational, which completes the proof of Main Theorem \ref{main1}.

\section{The Double Covering of a Quadric Ramified in a Divisor of Degree $8$}
During this section let $X$ be a Fano-Enriques threefold, whose canonical covering $V$ is the double covering of a quadric ramified in a divisor of degree $8$ and $\sigma \colon V \to V$ be the involution corresponding to the double covering $\pi\colon V\to X$ as before. 

 By {\cite[Section 6.1.6]{bayle}} there are homogeneous coordinates $x_0,\ldots, x_4, x_5$ on~$\P(1^5,2)$, where $x_5$ is the coordinate of weight $2$, such that $V\subset \P(1^5,2)$ is defined by the following system of equations:
			\begin{equation*}
				\begin{cases}
					P_2(x_0,x_1,x_2)+R_2(x_3,x_4)=0,\\
					x_5^2+A_2(x_0,x_1,x_2)x_3^2+B_2(x_0,x_1,x_2)x_3x_4 +C_2(x_0,x_1,x_2)x_4^2
					+F_4(x_0,x_1,x_2)+G_4(x_3,x_4)=0,
				\end{cases}				
			\end{equation*}
where $P_2$, $R_2$, $A_2$, $B_2$, $C_2$, are homogeneous polynomials of degree $2$ and $F_4$, $G_4$ are homogeneous polynomials of degree $4$. The involution $\sigma$ is given by 
			\[
				\sigma (x_0:x_1:x_2:x_3:x_4:x_5)=(x_0:x_1:x_2:-x_3:-x_4:-x_5).
			\]
It fixes exactly eight points $p_i$ for $i=1,\ldots,8$ on $V$, which are given by
			\[
				V\cap\{x_3=x_4=x_5=0\}.
			\]
\begin{remark} \label{remark:on_polynom}
Notice that $R_2(x_3,x_4)$ is reduced. Assume the contrary. Then, without loss of generality, we can suppose that $R_2(x_3,x_4)=\mu (x_3-\lambda x_4)^2$ for some $\mu$, $\lambda\in \mathbb{C}$. Let $\beta\in \mathbb{C}$ be a complex number such that 
	\[
		\beta^2 + G_4(\lambda,1)=0.
	\]
	Then the point $(0:0:0:\lambda:1:\beta)\in V$ is singular. Contradiction with the smoothness of $V$. 
\end{remark}

Thus, changing linearly the coordinate system, we can assume that $R_2(x_3,x_4)=x_3x_4$.

\begin{remark}\label{remark:on_polynom2}
	The polynomial $G_4(x_3,x_4)$ is co-prime with $x_3x_4$. Indeed, assume that $G_4(x_3,x_4)$ is divisible by $x_3$. Then the point $(0:0:0:0:1:0)\in V$ is singular. Contradiction. If $G_4(x_3,x_4)$ is divisible by $x_4$, then we get the same contradiction with the point $(0:0:0:1:0:0)\in V$.
\end{remark}
Denote by $q_i$ for $i=1,\ldots,8$ the singular points of $X$, i.e. $q_i=\pi(p_i)$. They are all of the type $\frac{1}{2}(1,1,1)$.

\subsection{Fibrations on Del Pezzo Surfaces Birational to $X$}\label{dp1}
Consider the linear subsystem  $\L=\pi^*|H|=\langle x_0,x_1,x_2\rangle|_V \subset |-K_V|$. Let's fix a $\sigma$-invariant point $p$. Then the linear subsystem $\L_p\subset \L$ of sections of $\L$ vanishing at $p$ yields a rational map 
\[
	\phi_{\L_{p}}\colon V \dashrightarrow \P^1,
\]
undefined in the singular curve $Z=\mathrm{Bs}\L_p$. 

Changing the coordinate system, we can also assume that $p=(1:0:0:0:0:0)$. Let $\tau \colon \widehat{V}\to V$ be the blow-up of the point $p$ with the exceptional divisor $G$, and $\widehat{\sigma}\colon \widehat{V}\to \widehat{V}$ be the lift of the involution $\sigma$. 

\begin{proposition}
	The restriction of the strict transform of $\tau_*^{-1}\L_p$ on $G$ is a mobile linear system of conics.
\end{proposition}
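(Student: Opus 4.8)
The plan is to work in the affine chart $\{x_0 \neq 0\}$ around $p=(1:0:0:0:0:0)$, using $u_i = x_i/x_0$ for $i=1,2$ and $u_3 = x_3/x_0$, $u_4 = x_4/x_0$ as local analytic coordinates near $p$ (note $x_5$ vanishes to order $\geq 2$ at $p$ after solving the second equation, so it is not an independent local coordinate). In these coordinates $\L_p = \langle x_1, x_2\rangle|_V$, so a general member of the strict transform $\tau_*^{-1}\L_p$ is cut out by a section vanishing at $p$, and its restriction to the exceptional divisor $G \cong \P^3$ (the projectivised tangent space, with homogeneous coordinates dual to $u_1,u_2,u_3,u_4$) is determined by the lowest-order terms of the defining equations of $V$ together with the lowest-order term of a general element $a u_1 + b u_2$ of $\L_p$.

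First I would dehomogenise the two defining equations of $V$ in the chart $\{x_0\neq 0\}$ and extract their leading terms at $p$. The first equation $P_2(x_0,x_1,x_2) + x_3 x_4 = 0$ has leading part equal to the degree-two part of $P_2(1,u_1,u_2)$ plus $u_3 u_4$; since $V$ is smooth at $p$ this quadric must be nondegenerate, and in particular $P_2(1,u_1,u_2)$ has a nonzero linear term in $u_1, u_2$ (otherwise $p$ would be singular), so after a linear change this first equation lets us solve, say, $u_1$ (or $u_2$) as a power series in the remaining coordinates with leading quadratic term $q_1(u_2,u_3,u_4)$. The second equation, after using $x_5^2 = (\text{order} \geq 2)$ and substituting, contributes at order two the quadric $A_2(1,0,0)u_3^2 + B_2(1,0,0)u_3 u_4 + C_2(1,0,0)u_4^2$ plus quadratic terms from $F_4$ (which starts in degree $\geq$ two in $u_1,u_2$) — actually $F_4(1,u_1,u_2)$ has a nonzero \emph{linear} part in $u_1,u_2$ unless $p$ is again singular, so the second equation solves a second variable, say $x_5$, and does not constrain the tangent cone further beyond $G$. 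The upshot is that $G\cong\P^3$ sits inside $\widehat V$, and $\widehat V \cap G$ is the quadric $\{q_1 = 0\}\subset \P^3$, a smooth quadric surface $\cong \P^1\times\P^1$ by smoothness of $V$ at $p$.

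Next I would compute the restriction of a general member of $\tau_*^{-1}\L_p$ to this quadric surface $\widehat V\cap G$. A general element is $a x_1 + b x_2$, whose leading term at $p$ is the linear form $a u_1 + b u_2$ on the tangent space; restricting the hyperplane $\{a U_1 + b U_2 = 0\}\subset\P^3$ to the quadric surface $\widehat V\cap G$ gives a conic (a $(1,1)$-curve on $\P^1\times\P^1$). As $(a:b)$ varies over $\P^1$ these hyperplanes sweep out the pencil of hyperplanes through the line $\{U_1 = U_2 = 0\}$, so the restricted linear system is the pencil of conics cut on $\widehat V\cap G$ by this pencil of hyperplanes. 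To see it is mobile (base-point free is too much to ask — the two sections vanish along a curve) I need to check its base locus has no fixed component: the base locus on $\widehat V\cap G$ is $\widehat V\cap G\cap\{U_1 = U_2 = 0\}$, which is the intersection of the quadric surface with a line, hence (generically, or always since $V$ is smooth so $q_1$ does not contain this line — one must verify $q_1|_{U_1=U_2=0} \not\equiv 0$, which again follows from nondegeneracy of $q_1$) just two points. A pencil of conics on a smooth quadric with only finitely many base points is mobile, so the restriction of $\tau_*^{-1}\L_p$ to $G$ is a mobile linear system of conics, as claimed.

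The main obstacle is bookkeeping the leading terms carefully: one must be certain which two of the variables $u_1,u_2,u_3,u_4$ get eliminated by the two equations, and that after elimination the surviving leading-order relation genuinely is a \emph{nondegenerate} quadric (so that $\widehat V\cap G$ is $\P^1\times\P^1$ and not a quadric cone or a pair of planes), and that the line $\{U_1=U_2=0\}$ meets it properly. All three facts reduce to the smoothness of $V$ at $p$: a degenerate tangent quadric or an excess intersection would produce a singular point of $V$ on the ramification locus (compare the arguments in Remarks \ref{remark:on_polynom} and \ref{remark:on_polynom2}). Once these nondegeneracy statements are pinned down — essentially by writing the $5\times 5$ Jacobian of the two equations at $p$ and invoking $\mathrm{rk} = 2$ — the conclusion follows formally, in close parallel to Proposition \ref{proposition:free} in the three-quadrics case.
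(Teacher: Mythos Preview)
The central error is your handling of $u_5 = x_5/x_0^2$. You assert that it vanishes to order $\geq 2$ and can be discarded as a local coordinate, but the second equation reads $u_5^2 + (\text{terms of order}\geq 1\text{ in } u_1,\dots,u_4) = 0$, and one cannot solve this for $u_5$ holomorphically at $p$; the projection forgetting $u_5$ is two-to-one near $p$, not a local isomorphism. In fact $u_5$ is one of the three local coordinates on $V$ at $p$. To see this, compute the Jacobian of the two defining equations at $p$: every partial with respect to $u_3,u_4,u_5$ vanishes there (for instance $\partial_{u_5}(u_5^2)=2u_5=0$ and $\partial_{u_3}(u_3u_4)=u_4=0$ at $p$, while $G_4$ has no terms of degree $<4$), so smoothness of $V$ at $p$ forces the $2\times 2$ block of $(u_1,u_2)$-partials to be invertible. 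One therefore solves for $u_1,u_2$ in terms of $u_3,u_4,u_5$, and the exceptional divisor is $G\cong\P^2$ with homogeneous coordinates $u_3:u_4:u_5$ --- not $\P^3$, and not a quadric surface inside a $\P^3$.

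This changes the computation of the restricted system entirely. The generators $u_1,u_2$ of $\L_p$, viewed as functions on $V$, vanish to order $2$ (not $1$) at $p$: their linear parts are exactly what the two tangent relations eliminate. Their quadratic leading terms are, up to the invertible change above, the conics
\[
\mathcal{Q}_2=\{u_3u_4=0\}\quad\text{and}\quad \mathcal{Q}_1=\{u_5^2 + A_2(1,0,0)\,u_3^2 + C_2(1,0,0)\,u_4^2=0\}
\]
on $G\cong\P^2$, and mobility follows because neither line $u_3=0$ nor $u_4=0$ is a component of $\mathcal{Q}_1$. Your picture of hyperplane sections of a quadric $\P^1\times\P^1\subset\P^3$ through the line $\{U_1=U_2=0\}$ does not arise. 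The analogy with Proposition~\ref{proposition:free} is apt but you have misapplied it: there too the point was that the generators of $\L_{p_1}$ vanish to order $2$ on $V$ and restrict to genuine \emph{conics} on an exceptional $\P^2$, not to linear sections of some ambient quadric.
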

\begin{proof}
Consider the affine patch $\{x_0\neq 0\}$ with affine coordinates 
\[
	u_1=x_1/x_0,\ldots,\, u_4=x_4/x_0,\, u_5=x_5/x_0^2.
\] 
Then one can see that $u_3$, $u_4$ and $u_5$ are the local coordinates of $V\cap\{x_0\neq 0\}$ at the origin. Hence, we can consider them as homogeneous coordinates on $G\cong \P^2$. Thereby, the restriction of $\tau_*^{-1}\L_p$ on $G$ is generated by two conics:
\begin{align*}
	&\mathcal{Q}_1=\{u_5^2+au_3^2+cu_4^2=0\},\\
	&\mathcal{Q}_2=\{u_3 u_4=0\},
\end{align*}
where $a=A_2(1,0,0)$, $c= C_2(1,0,0)$. Now we see that this linear system of conics is mobile on $G$. Moreover, from the above equations one can see that 
\[
\mathrm{Bs}(\tau_*^{-1}\L_p|_G)=
\begin{cases}
4 \text{ points}, \text{ if both } a \text{ and }c\neq 0; \\
3 \text{ points}, \text{ if only one of } a \text{ or } b\neq 0;\\
2 \text{ points}, \text{ if both } a \text{ and } b= 0.\\
\end{cases}
\]
\end{proof}

\begin{proposition}\label{baselocus}
	The curve $Z$ is a union of two curves $Z_1$ and $Z_2$ of arithmetic genus~$1$, intersecting in the singular point $p$. Moreover, let $\widehat{Z}_i$ be the strict transform of $Z_i$ on $\widehat{V}$. Then $C_i=\widehat{Z}_i/\langle \widehat{\sigma}\rangle$ is a smooth rational curve.
\end{proposition}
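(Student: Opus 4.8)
The plan is to work out the base locus $Z = \mathrm{Bs}\,\L_p$ explicitly in coordinates. With $p = (1:0:0:0:0:0)$, a section of $\L_p$ is a linear combination of $x_1, x_2$, so up to a linear change of coordinates we may set $x_1 = \lambda x_0$, $x_2 = 0$ (or treat the two generators symmetrically); substituting into the two defining equations of $V \subset \P(1^5,2)$ and using $R_2(x_3,x_4) = x_3 x_4$ leaves $Z$ cut out in the weighted $\P(1^3,2)$ with coordinates $x_0, x_3, x_4, x_5$ by one quadric of the shape $P_2(\ldots) + x_3 x_4 = 0$ (linear in $x_0^2$ after the substitution, hence solving for one variable) together with the degree-$4$ equation $x_5^2 + (\text{quadratic in } x_0)\cdot(\text{quadratic in } x_3,x_4) + F_4 + G_4(x_3,x_4) = 0$. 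The first equation lets us eliminate $x_0^2$, and plugging into the second produces a single equation in $x_3, x_4, x_5$ that, because $x_5$ appears only as $x_5^2$ with constant coefficient, exhibits $Z$ as a double cover of the $\P^1$ with coordinates $(x_3:x_4)$ branched over the quartic $G_4(x_3,x_4) = 0$ modified by the eliminated terms.

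Next I would analyze the splitting. The key point is that the branch quartic in $(x_3:x_4)$ is \emph{not} squarefree: the factor $x_3 x_4$ coming from $R_2$ forces the would-be elliptic double cover to degenerate. Concretely, when we eliminate $x_0^2$ via $x_0^2 = -\,(\text{something})/P_2\text{-coefficient}$ and substitute, the resulting quartic in $(x_3:x_4)$ acquires the form $x_3 x_4 \cdot (\text{quadratic})$ plus $G_4$, and one checks (using Remark \ref{remark:on_polynom2}, that $G_4$ is coprime to $x_3 x_4$, together with the smoothness of $V$) that the total quartic factors as a product of two quadratic forms $q_1(x_3,x_4)\cdot q_2(x_3,x_4)$ — equivalently, the double cover splits into two components $Z_1, Z_2$, each a double cover of $\P^1$ branched at the two roots of the respective $q_i$, hence each of arithmetic genus $1$. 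The two components both pass through $p$ (the point with $x_3=x_4=x_5=0$ sits on both), and $Z_1 \cap Z_2 = \{p\}$ set-theoretically; that $p$ is genuinely the only intersection point, and that it is a node-type singularity of $Z$, again follows from smoothness of $V$ at $p$.

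For the second assertion I would pass to $\widehat{V}$: blowing up $p$ separates the two branches, so $\widehat{Z}_1$ and $\widehat{Z}_2$ are now disjoint, and each maps isomorphically to its image away from $G$. The involution $\widehat\sigma$ acts on each $\widehat{Z}_i$ (it preserves the base locus and fixes $p$, hence the tangent directions it fixes on $G$ are permuted among the $\widehat Z_i$; for a $\sigma$-invariant $p$ one shows each $\widehat Z_i$ is individually invariant). On $\widehat Z_i$, which is a smooth genus-$1$ curve, $\widehat\sigma$ acts as the involution $x_5 \mapsto -x_5$ over the $(x_3:x_4)$-line, i.e. the hyperelliptic-type involution with two fixed points; the quotient $C_i = \widehat Z_i / \langle\widehat\sigma\rangle$ is therefore a smooth curve, and a Riemann–Hurwitz count ($2 \cdot 0 - 2 = 2(2 g(C_i) - 2) + 2$) gives $g(C_i) = 0$, so $C_i \cong \P^1$.

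The main obstacle will be the second sentence of the first paragraph made precise: verifying that the quartic branch locus genuinely \emph{factors} into two quadratics rather than staying irreducible — this is where the special form $R_2 = x_3 x_4$ (Remark \ref{remark:on_polynom}) is essential, and one must track carefully how the elimination of $x_0^2$ interacts with the $x_3 x_4$ term and the degree-$4$ parts $F_4, G_4$. I expect this to reduce to a short but slightly delicate computation: after the substitution $x_1 = \lambda x_0$, $x_2 = 0$, the first equation reads $(\alpha + \beta\lambda + \gamma\lambda^2)x_0^2 + x_3 x_4 = 0$ for constants depending on $P_2$, so $x_0^2$ is (generically in $\lambda$) a constant multiple of $x_3 x_4$; substituting into the second equation turns every "quadratic in $x_0,x_1,x_2$" factor into a constant multiple of $x_3 x_4$, so the equation becomes $x_5^2 = (\text{const})\,x_3 x_4\cdot(\text{quadratic in }x_3,x_4) - G_4(x_3,x_4) + (\text{const})\,(x_3x_4)^2$, and the right-hand side is a quartic in $(x_3:x_4)$ that one must show is a perfect product of two quadratics — which, since it is the branch locus of a curve that acquires a node at $p$ upon the whole family degenerating there, follows from the local structure at $p$ dictated by smoothness of $V$. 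Once the factorization is in hand, the genus and rationality statements are immediate from Riemann–Hurwitz.
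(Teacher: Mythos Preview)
Your approach has a basic error at the outset that cascades through the rest. The base locus $Z=\mathrm{Bs}\,\L_p$ is the common zero set of \emph{all} sections of $\L_p=\langle x_1,x_2\rangle|_V$, namely $V\cap\{x_1=x_2=0\}$; it is not a general fibre of $\phi_{\L_p}$, which is what your substitution $x_1=\lambda x_0$, $x_2=0$ describes. Once you set $x_1=x_2=0$, the condition $p=(1{:}0{:}\cdots{:}0)\in V$ forces $P_2(1,0,0)=0$ and $F_4(1,0,0)=0$, so the first defining equation of $V$ collapses to $x_3x_4=0$ in $\P(1^3,2)$. There is no $x_0^2$-term left to eliminate, and the splitting $Z=Z_1\cup Z_2$ with $Z_1=Z\cap\{x_3=0\}$, $Z_2=Z\cap\{x_4=0\}$ falls out immediately from this factorisation of the \emph{first} equation, not from any factorisation of a quartic branch locus. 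Your proposed mechanism---eliminate $x_0^2$ and hope the resulting quartic in $(x_3{:}x_4)$ factors as $q_1q_2$---cannot work: the elimination is vacuous precisely at the base locus, and on a general fibre (where the elimination does make sense) the curve is an honest smooth elliptic curve with no splitting at all.

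The second half is also off. Each $Z_i$ is \emph{singular} at $p$: in affine coordinates near $p$ one finds $Z_1$ cut out by $x_5^2+c\,x_0^2x_4^2+\gamma\,x_4^4=0$ with $\gamma\neq 0$ (Remark~\ref{remark:on_polynom2}), which is a nodal rational curve when $c\neq 0$ and has the tacnodal form $y^2=\alpha x^4$ when $c=0$. Thus $Z_i$ has arithmetic genus $1$ but geometric genus $0$, and $\widehat{Z}_i$ is already rational, not a smooth elliptic curve as you assert. Your Riemann--Hurwitz identity $2\cdot 0-2=2(2g(C_i)-2)+2$ in fact tacitly uses genus $0$ on the left, contradicting your own ``smooth genus-$1$'' hypothesis; with genus $1$ and two fixed points Riemann--Hurwitz would give $g(C_i)=\tfrac12$. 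The paper instead treats the cases $c\neq 0$ and $c=0$ separately: in the first, $\widehat{Z}_1$ is a smooth $\P^1$ meeting $G$ in two points and the quotient is visibly $\P^1$; in the second, one computes the quotient of $y^2=\alpha x^4$ by $(x,y)\mapsto(-x,-y)$ directly in invariants and obtains a cuspidal cubic, whose resolution is $\P^1$.
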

\begin{proof}
The curve $Z$ is given in $\P(1^3,2)$  with coordinates $x_0,x_3,x_4,x_5$ by the following system of equations:
	\[
		\begin{cases}
			x_3x_4=0,\\
			x_5^2+ax_0^2x_3^2+bx_0^2x_3x_4 +cx_0^2x_4^2+G_4(x_3,x_4)=0.
		 \end{cases}
	 \]	
Notice, that the first equation, restricted to the plane $\{x_5 = 0\}$, defines a conic, that is singular at the point $p$. So, $Z$ is a double covering of the conic by Remark \ref{remark:on_polynom2}. Hence, $Z$ consists of two components, namely $Z_1=Z\cap\{x_3=0\}$ and $Z_2=Z\cap\{x_4=0\}$. Up to a change of the coordinates, it is enough to study $Z_1$ only.
	
	If $c\neq 0$, then, using the second equation, we see that $Z_1$ is a nodal curve of arithmetic genus $1$ with a node at $p$. So, $\widehat{Z}_1$ is a smooth rational curve, intersecting $G$ in two points. Hence, $C_1$ is a smooth rational curve.
	
	If $c=0$, then $Z_1$ can be given in $\mathbb{A}^2$ with coordinates $x,y$ by 
		\[
			y^2=\alpha x^4,
		\]
		where $\alpha \neq 0$ by Remark \ref{remark:on_polynom2}. The involution $\sigma$ acts on $\mathbb{A}^2$ as 
		\[
			(x,y)\mapsto (-x,-y).
		\]
		So, by taking the quotient we obtain a curve $\widetilde{Z}_1\in \mathbb{A}^3$ with coordinates $u,v,w$, where $u=x^2$, $v=y^2$, $w=xy$ given by the following system:
		\[
			\begin{cases}
				uv=w^2,\\
				v=\alpha u^2.
			 \end{cases}
	 	\]	
	 Hence, we have 
		\[
			\alpha u^3=w^2.
		\]
	So, $\widetilde{Z}_1$ is a cuspidal curve.	 Thus, blowing up the singular point, we see that $C_1$ is a smooth rational curve.
\end{proof}

 Therefore, by Proposition \ref{baselocus} the linear subsystem $\H_q\subset |H|$ of sections of $|H|$ vanishing at $q$, where $q=\pi(p)$, yields a rational map 
 \[	
 	\phi_{\H_q}\colon X \dashrightarrow \P^1,
 \] 
 undefined in a curve that splits into a union of two rational curves $C_1$ and $C_2$ after the Kawamata blow-up $\kappa \colon \widetilde{X}\to X$ of the point  $q$. Let $E$ be the $\kappa$-exceptional divisor over $q$. 

\begin{lemma}\label{lemma:normal2}
	One has $\N_{C_j/\widetilde{X}} \cong \O_{\P^1}(-1)\oplus\O_{\P^1}(-1)$.
\end{lemma}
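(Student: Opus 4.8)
The plan is to follow the proof of Lemma~\ref{lemma:normal}: realise $C_j$, in a neighbourhood of itself, as the complete intersection of two surfaces in a pencil, compute the self-intersection of $C_j$ on such a surface, and split the resulting conormal sequence. Write $\widetilde{\H}_q$ for the strict transform on $\widetilde X$ of the pencil $\H_q$; its general member $S$ satisfies $S\sim_{\Q}\kappa^*H-E$ (the strict transform of a general member $H'$ of $\H_q$ absorbs $E$ with coefficient one, since $\pi^*H'$ has a double point at $p$, exactly as in the three-quadrics case). I first claim that $\mathrm{Bs}\,\widetilde{\H}_q=C_1\cup C_2$, that $C_1\cap C_2=\varnothing$, and that for general $S,S'\in\widetilde{\H}_q$ the $1$-cycle $S\cap S'$ equals $C_1+C_2$. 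Indeed $\widetilde{\H}_q|_E$ is a mobile system of conics on $E\cong\P^2$, so no component of the base locus lies in $E$; the strict transforms $\widehat Z_1,\widehat Z_2$ on $\widehat V$ are disjoint because, by the local analysis of Proposition~\ref{baselocus}, $Z_1$ and $Z_2$ have no common tangent direction at $p$; and, pulling back by the double cover $\widehat\pi\colon\widehat V\to\widetilde X$, the cycle $S\cap S'$ becomes the reduced cycle $\widehat Z_1+\widehat Z_2$, whence $S\cap S'=C_1+C_2$. In particular $S|_S=C_1+C_2$ and $C_j\cdot C_{j'}=0$ on $S$ for $j\neq j'$, and $\widetilde X$ is smooth along $C_j$ (which misses the remaining seven quotient singularities).

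Next fix a general $S\in\widetilde{\H}_q$. Since $S|_S=C_1+C_2$, $C_j\cdot C_{j'}=0$ and $S\sim_{\Q}\kappa^*H-E$, we get
\[
C_j^2=C_j\cdot(S|_S)=C_j\cdot(\kappa^*H-E)|_S=\kappa^*H\cdot C_j-E\cdot C_j .
\]
For the first term the projection formula gives $\kappa^*H\cdot C_j=H\cdot\pi(Z_j)=\tfrac12\,\pi^*H\cdot Z_j=\tfrac12\,\L\cdot Z_j$, because $Z_j$ is $\sigma$-invariant and $\pi|_{Z_j}$ is a double cover; and $\L\cdot Z_j=2$: the curve $Z_1=V\cap\{x_1=x_2=x_3=0\}$ lies in the weighted plane $\P(1,1,2)$ with coordinates $x_0,x_4,x_5$, on which the only coordinate of $\L=\langle x_0,x_1,x_2\rangle$ not vanishing identically is $x_0$, so a general member of $\L$ meets $Z_1$ in the length-$2$ subscheme $\{x_0=0\}\cap Z_1\subset\P(1,2)$ (and symmetrically for $Z_2$). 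For the second term, $\kappa$ is induced from the blow-up $\tau\colon\widehat V\to V$ of $p$ through $\widehat\pi$, which is ramified along $G=\tau^{-1}(p)$, so $\widehat\pi^*E=2G$; combined with $\widehat\pi^*C_j=\widehat Z_j$ and $\widehat\pi_*\widehat Z_j=2C_j$ this gives $E\cdot C_j=G\cdot\widehat Z_j=\mult_p Z_j=2$, the last equality since $Z_j$ has arithmetic genus $1$, geometric genus $0$ and a singular point at $p$, hence a double point there. Therefore $C_j^2=1-2=-1$ on $S$, and as $C_j\cong\P^1$ we conclude $\N_{C_j/S}\cong\O_{\P^1}(-1)$.

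Finally, near $C_j$ (and away from $C_{j'}$) the curve $C_j$ is the scheme-theoretic complete intersection of two general members $S_1,S_2$ of $\widetilde{\H}_q$, so there is a short exact sequence
\[
0\longrightarrow\N_{C_j/S_1}\longrightarrow\N_{C_j/\widetilde X}\longrightarrow\O_{\widetilde X}(S_2)|_{C_j}\longrightarrow0,
\]
where $\O_{\widetilde X}(S_2)|_{C_j}\cong\N_{C_j/S_2}\cong\O_{\P^1}(-1)$ by the previous step. Since $\mathrm{Ext}^1_{\P^1}(\O_{\P^1}(-1),\O_{\P^1}(-1))=H^1(\P^1,\O_{\P^1})=0$, this sequence splits and $\N_{C_j/\widetilde X}\cong\O_{\P^1}(-1)\oplus\O_{\P^1}(-1)$. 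I expect the main obstacle to be the identification $E\cdot C_j=2$: it interlaces the Kawamata blow-up with the double cover $\widehat\pi$ (via $\widehat\pi^*E=2G$, the degree-$2$ map $\widehat Z_j\to C_j$, and the multiplicity of $Z_j$ at $p$), and it must be verified uniformly over the cases of Proposition~\ref{baselocus}, including the degenerate one where $Z_j$ breaks into two branches tangent at $p$; a secondary subtlety is the reducedness of $S\cap S'$, which is what makes $C_j\cdot S|_S$ equal to $C_j^2$ and not a multiple of it.
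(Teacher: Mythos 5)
Your proof is correct and follows essentially the same route as the paper, which simply says ``as in Lemma~2.6'' and records $C_j^2=-1$ before splitting the normal bundle via the local complete-intersection structure in the pencil $|\kappa^*H_q-E|$. The one genuine adaptation needed from the three-quadrics case --- that here the $-1$ arises as $\kappa^*H\cdot C_j-E\cdot C_j=1-2$ because $Z_j$ has a double point at the single blown-up fixed point, rather than as $1-1-1$ from two simple points --- is exactly the computation you carry out (via $\widehat\pi^*E=2G$ and $G\cdot\widehat Z_j=\mult_pZ_j=2$), and your treatment of the degenerate case of Proposition~\ref{baselocus} and of the reducedness of $S\cap S'$ supplies details the paper leaves implicit.
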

\begin{proof}
	This is similar to Lemma \ref{lemma:normal}. Let $S\in  |\kappa^*H_q - E|$ be a general surface. We observe as in the proof of Lemma \ref{lemma:normal} that $C_j^2=-1$, where $C_j$ is regarded as a divisor on $S$. Since locally $C_j$ is a complete intersection of two elements of $|\kappa^*H_q - E|$, then 
	\[
		\N_{C_j/\widetilde{X}} \cong \O_{\P^1}(-1)\oplus\O_{\P^1}(-1).
	\]
\end{proof}

Consequently, by Lemma \ref{lemma:normal2}, the flops of the curves $C_1$ and $C_2$ are the Atiyah  flops. Denote by $\psi\colon \widetilde{X}\dashrightarrow U$ the composition of two Atiyah flops. Similarly to the proof of Proposition \ref{prop:construction}, the linear system $\psi_*(\kappa^* H_q - E)$ is base point free, and hence gives a morphism $f\colon U\to \P^1$. Since $\mathrm{Pic}(U)\cong \mathbb{Z}^2$, then $f\colon U\to \P^1$ is a Mori fibre space, i.e. a general fibre $F$ of $f$ is a del Pezzo surfaces.

\begin{proposition}
	Let $F$ be a general fibre of $f\colon U\to \P^1$. Then $F$ is a del Pezzo surface of degree $1$. 
\end{proposition}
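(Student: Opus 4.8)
The plan is to compute the degree $(-K_F)^2$ by transporting the computation through the Mori fibre space $f$ and the flop $\psi$ back to an intersection number on $\widetilde{X}$. First I would note that $F$ is smooth by generic smoothness and that $F = f^{-1}(t)$, hence $\O_U(F)|_F \cong \O_F$, for a general $t \in \P^1$; adjunction then gives $K_F = K_U|_F$, so that $(-K_F)^2 = (-K_U)^2 \cdot F$. Since $f$ is already known to be a Mori fibre space whose general fibre is a del Pezzo surface, it remains only to show that this number equals $1$.

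Let $\widetilde{H}_q$ be the strict transform on $\widetilde{X}$ of a general member $H_q \in \H_q$, so that $\widetilde{H}_q \sim \kappa^* H_q - E$, where $E$ is the Kawamata exceptional divisor over $q$, and $F = \psi_*\widetilde{H}_q$ is a general fibre of $f$. Note that $\widetilde{H}_q$ contains each flopping curve $C_j$ with multiplicity one, as it lies in the base locus of the pencil on $\widetilde{X}$, whereas $F$ meets each flopped curve transversally. Now $\psi$ is crepant, and $-K_{\widetilde{X}} \cdot C_j = 0$ — this is automatic from $\N_{C_j/\widetilde{X}} \cong \O_{\P^1}(-1)\oplus\O_{\P^1}(-1)$ by adjunction on $C_j$. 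Hence, working on a common resolution $\widetilde{X} \xleftarrow{\omega_1} W \xrightarrow{\omega_2} U$ of $\psi$, one has $\omega_1^*(-K_{\widetilde{X}}) = \omega_2^*(-K_U)$ and $\omega_2^* F = \omega_1^* \widetilde{H}_q - G_1 - G_2$, where $G_j$ is the $\omega_1$-exceptional divisor over $C_j$, and the correction terms $\big(\omega_1^*(-K_{\widetilde{X}})\big)^2 \cdot G_j$ vanish because $-K_{\widetilde{X}}|_{C_j}$ has degree $0$. Therefore $(-K_U)^2 \cdot F = (-K_{\widetilde{X}})^2 \cdot \widetilde{H}_q$ (equivalently, this is the standard transformation rule for triple intersection numbers under an Atiyah flop).

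It then remains to evaluate $(-K_{\widetilde{X}})^2 \cdot \widetilde{H}_q$ on $\widetilde{X}$. Writing $h = \kappa^* H$, one has $\widetilde{H}_q \sim h - E$ and $-K_{\widetilde{X}} \equiv h - \tfrac12 E$, together with $h^3 = H^3 = (-K_X)^3 = 2$ (since $g(X) = 2$), $h^2 \cdot E = h \cdot E^2 = 0$ (as $h|_E$ is trivial), and $E^3 = 4$ (for the Kawamata blow-up of a $\tfrac12(1,1,1)$ point, $E \cong \P^2$ with $\O_E(E) \cong \O_{\P^2}(-2)$, exactly as in the degree computation of Section~\ref{sect}). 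Expanding,
\[
(-K_F)^2 \;=\; (-K_{\widetilde{X}})^2 \cdot \widetilde{H}_q \;=\; \Big(h - \tfrac12 E\Big)^2 \cdot (h - E) \;=\; h^3 - \tfrac14 E^3 \;=\; 2 - 1 \;=\; 1 ,
\]
so $F$ is a del Pezzo surface of degree $1$.

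The point requiring care is the flop step: one must check that $\widetilde{H}_q$, being in the base locus of the pencil on $\widetilde{X}$, genuinely contains each $C_j$ with multiplicity one, whereas $F$ does not contain the flopped curves $C_j^+$ — so that the exceptional multiplicities on the common resolution are as stated and the vanishing correction terms are the relevant ones. Everything else is a routine computation.
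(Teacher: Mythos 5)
Your proof is correct and is essentially the computation the paper does: both arguments reduce the degree of $F$ to intersection numbers on the Kawamata blow-up $\widetilde{X}$, using $(-K_X)^3=2$ and $E^3=4$, and then transport the answer through the two Atiyah flops. The only (cosmetic) difference is how the flop is handled — the paper restricts to the general surface $S\in|\kappa^*H_q-E|$, computes $K_S^2=-\tfrac14E^3=-1$, and notes that $\psi|_S$ contracts the two $(-1)$-curves $C_1,C_2$ so that $K_F^2=K_S^2+2=1$, whereas you stay on the threefold and use the common resolution; note only that your correction terms $\bigl(\omega_1^*(-K_{\widetilde{X}})\bigr)^2\cdot G_j$ vanish by the projection formula for \emph{any} pullback divisor, not specifically because $-K_{\widetilde{X}}\cdot C_j=0$.
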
 
\begin{proof}
Let $S\in  |\kappa^*H_q - E|$ be a general surface. First, let's compute $K_S^2$. We write $K_{\widetilde{X}}= \kappa^*K_X+\frac{1}{2}E$ and hence 
\[
K_S^2=(K_{\widetilde{X}}+S)^2\cdot S=-\tfrac{1}{4}E^3=-1.
\]
Notice that $\psi|_S$ contracts $C_1$ and $C_2$ on $S$ and makes $S$ a fibre of $f$, hence $K_F^2=1$.
\end{proof}

\subsection{The Birational Geometry of Singular Del Pezzo Fibrations of Degree $1$}
In the previous section we constructed the Mori fibre space $f\colon U\to \P^1$ that is birationally equivalent to $X$, whose fibre is a del Pezzo surface of degree $1$. The aim of this section is to study the birational geometry of $U$.

The birational geometry of singular del Pezzo fibrations of degree $1$ has been studied in \cite{okada2020birational}, see also \cite{pukhlikov1998birational, MR1773767, MR1992109, MR2261543} for the smooth case. Let's recall some definitions and results. Throughout this section let $g\colon Y \to \P^1$ be a del Pezzo fibration, where $Y$ is not assumed to be smooth, and let $F$ be the fibre class of $g$. Also, we will use the shorthand $W/B$ to denote a Mori fibre space $W\to B$.

\begin{definition}
	\begin{enumerate}
		\item[]
		\item In the notation above, we define the {\itshape nef threshold} of $Y/\P^1$ as
			\[
				\nef(Y/\P^1)=\inf \{ r \mid -K_Y +rF \text{ is nef}\}.
			\]
		\item For a given number $\delta\in \mathbb{R}$, we say that $Y/\P^1$ satisfies the $K_{\delta}^3$-{\itshape condition} if 
			\[
				(-K_Y)^3 +\nef(Y/\P^1)\leq \delta.
			\]
	\end{enumerate}
\end{definition}

\begin{remark}[{\cite[Remark 3.5]{okada2020birational}}]\label{rem}
	Let $R \subset \overline{\mathrm{NE}}(X)$ be the extremal ray which is not generated by a curve contracted by $g$. Let $\xi \in R$ be a class. Then 
		\[
			(-K_Y+\nef(Y/\P^1)F)\cdot \xi=0,
		\]
		and $F\cdot \xi>0$. Thereby,
		\[
		\nef(Y/\P^1)=-\dfrac{-K_Y\cdot\xi}{F\cdot\xi}.
		\]
\end{remark}

Let us remind a classical definition.

\begin{definition} \label{definition:square}
	A birational map $\Phi \colon W\dashrightarrow W'$ between Mori fibre spaces $W/B$ and $W'/B'$ is a {\itshape square equivalence} if there is a birational map $h\colon B\dashrightarrow B'$ such that the following  diagram
	\[
		\xymatrix{
		&W\ar[d]\ar@{-->}[r]^{\Phi} &W'\ar[d]\\
		&B\ar@{-->}[r]^{h}&B'
		}
	\]
commutes, and the induced birational map between generic fibres of $W/B$ and $W'/B'$ is an isomorphism.
	
\end{definition}
\begin{proposition}[{\cite[Proposition 3.6]{okada2020birational}}] \label{okada} Let $Y/\P^1$ be a fibration on del Pezzo surfaces of degree $1$ and $Y$ has only terminal quotient singular points of type $\frac{1}{2}(1,1,1)$. Suppose that there is a birational map $\Phi \colon Y \dashrightarrow W$ to a Mori fibre space $W/B$ and let a linear system $\mathcal{M}_Y\subset |-n K_Y+aF|$ be the strict transform of a very ample complete linear system on $W$. If $a\geq0$ and $Y/\P^1$ satisfies the $K_{3/2}^3$-condition, then $\Phi$ is a square equivalence.
\end{proposition}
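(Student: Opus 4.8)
The plan is a Noether--Fano--Iskovskikh analysis in the Sarkisov category, in the spirit of \cite{pukhlikov1998birational} for the smooth case and \cite{okada2020birational} for the present singular one. Assume for contradiction that $\Phi$ is not a square equivalence, and write $\M_Y\sim_{\Q}-nK_Y+aF$. Since $a\ge 0$, the $\Q$-divisor $K_Y+\tfrac1n\M_Y\sim_{\Q}\tfrac anF$ is nef, so the Noether--Fano--Iskovskikh criterion for threefold Mori fibre spaces with terminal singularities forces the pair $(Y,\tfrac1n\M_Y)$ to be \emph{not} canonical: there is a divisorial valuation $E$ over $Y$ --- a maximal singularity --- with $\mathrm{ord}_E\M_Y>n\,a(E,Y)$. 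Set $Z=\mathrm{center}_Y(E)$; it remains to exclude every possibility for $Z$.

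First suppose $Z$ dominates $\P^1$. Restrict to the generic fibre $Y_\eta$: this is a del Pezzo surface of degree $1$ over $K=\mathbb{C}(\P^1)$ with $\mathrm{Pic}(Y_\eta)=\mathbb{Z}$, and since $F|_{Y_\eta}=0$ the system $\M_Y$ restricts to a mobile subsystem of $|-nK_{Y_\eta}|$. Multiplicities do not drop under restriction to the generic member, so the maximal singularity descends to $(Y_\eta,\tfrac1n\M_Y|_{Y_\eta})$, contradicting the classical birational superrigidity of degree-one del Pezzo surfaces of Picard rank one (on such a surface the Bertini involution is biregular, so every mobile subsystem of $|-nK|$ is canonical; see also \cite{okada2020birational}). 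Hence $Z$ lies in a single scheme fibre $F_0$ of $g$.

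This vertical case is the heart of the argument. Fix general $M_1,M_2\in\M_Y$; using $F^2=0$ and $(-K_Y)^2\cdot F=(-K_{F})^2=1$ for a general (smooth, degree-one) fibre $F$, the effective $1$-cycle $M_1\cdot M_2$ satisfies $(M_1\cdot M_2)\cdot F=n^2$. One then runs three subcases: $Z$ is a curve in $F_0$, so $\mathrm{mult}_Z\M_Y>n$; $Z$ is a smooth point of $Y$ on $F_0$, so $\mathrm{mult}_Z(M_1\cdot M_2)>4n^2$ by the $4n^2$-inequality; or $Z$ is one of the terminal $\tfrac12(1,1,1)$ points on $F_0$, in which case $E$ is extracted by a weighted blow-up of discrepancy $\tfrac12$ and one applies the corresponding local intersection inequality at a $\tfrac12(1,1,1)$ point. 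In each subcase one bounds the portion of $M_1\cdot M_2$ carried by the (possibly reducible or non-normal) fibre $F_0$ from below via the local inequality, and from above by pairing $M_1\cdot M_2$ with $F$ and with the nef divisor $-K_Y+\nef(Y/\P^1)F$ (whose relevant intersection numbers are supplied by Remark~\ref{rem}); combining the two bounds, as in \cite[Section 3]{okada2020birational}, forces
\[
(-K_Y)^3+\nef(Y/\P^1)>\tfrac32,
\]
contradicting the $K_{3/2}^3$-condition. Hence no maximal singularity exists; $(Y,\tfrac1n\M_Y)$ is canonical, and since $K_Y+\tfrac1n\M_Y$ is nef the Sarkisov algorithm started from $Y/\P^1$ terminates immediately, i.e.\ $\Phi$ is a square equivalence.

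The main obstacle is exactly this vertical estimate: the chain of inequalities must be calibrated to land precisely on the threshold $\tfrac32$, and the delicate case is a maximal singularity over a $\tfrac12(1,1,1)$ point, where one must select the correct weighted blow-up, extract the sharp local intersection inequality, and control how its discrepancy $\tfrac12$ interacts with the vertical/horizontal splitting of $M_1\cdot M_2$ on a fibre $F_0$ that need not be irreducible or normal. For the use made of this proposition in the present paper one may, of course, simply invoke \cite[Proposition~3.6]{okada2020birational} verbatim.
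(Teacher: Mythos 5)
This proposition is not proved in the paper at all: it is imported verbatim from \cite[Proposition 3.6]{okada2020birational}, so there is no internal argument to compare yours against. That said, your outline does reproduce the actual architecture of Okada's proof (which in turn follows Pukhlikov's smooth case): since $a\geq 0$ the divisor $K_Y+\tfrac1n\M_Y\sim_{\Q}\tfrac{a}{n}F$ is nef, so by the Noether--Fano--Iskovskikh criterion a failure of square equivalence produces a maximal singularity; horizontal centres are excluded by restricting to the generic fibre, where mobility of $\M_Y|_{Y_\eta}\subset|-nK_{Y_\eta}|$ together with $(-K_{Y_\eta})^2=1$ forces $\mathrm{mult}_P\leq n$ at every point (this is exactly the biregularity of the Bertini involution, i.e.\ superrigidity of degree-one del Pezzo surfaces of Picard rank one over a perfect field); and vertical centres are excluded by intersecting $M_1\cdot M_2$ against $F$ and the nef divisor $-K_Y+\nef(Y/\P^1)F$, which is where the threshold $\tfrac32$ enters.

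However, as a proof your text has a genuine gap precisely where you say it does: the entire vertical case is described but not executed. The local inequality at a $\tfrac12(1,1,1)$ point (the analogue of the $4n^2$-inequality for the weighted blow-up of discrepancy $\tfrac12$), the splitting of the cycle $M_1\cdot M_2$ into horizontal and vertical parts on a possibly reducible fibre, and the derivation of $(-K_Y)^3+\nef(Y/\P^1)>\tfrac32$ are the whole content of \cite[Section 3]{okada2020birational}; without them the argument is a roadmap, not a proof. One further small caution: the NFI step should be quoted in the form that requires both nefness of $K_Y+\tfrac1n\M_Y$ \emph{and} canonicity of the pair to conclude square equivalence, so that non-square-equivalence yields non-canonicity only because nefness is already guaranteed by $a\geq 0$ --- you use this correctly but it deserves an explicit citation (e.g.\ \cite[Theorem 4.2]{corti1995factor}). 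For the purposes of the present paper the intended and sufficient route is simply to invoke Okada's result, as you note.
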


\subsection{Birational Models of $X$}
In this section we will completely describe the birational geometry of $X$ and finish the proof of Main Theorem \ref{main2}.

Recall that in Section \ref{dp1} for each singular point $q_i \in X$ we have    constructed a birational map $\rho_i \colon X\dashrightarrow U_i$, where $f_i\colon U_i\to \P^1$ is a fibration on del Pezzo surfaces of degree $1$ over~$\P^1$. 

Let's fix notation. Let $\Phi\colon X\dashrightarrow W$ be a birational map to a Mori fibre space $W/B$ and let $\M_X$ be the strict transform of a very ample complete linear system on $W$. Denote by~$\lambda\in \Q_{>0}$ the positive rational number such that 
\[
	K_X+\lambda \M_X\sim_{\Q}0.
\]
By the Noether-Fano inequality the pair $(X, \lambda \M_X)$ is not canonical if $\Phi$ is not an isomorphism, see \cite[Theorem 4.2]{corti1995factor}. 

Further, let $\Phi_i=\Phi\circ \rho_i^{-1}$ and $\M_{U_i}=(\rho_i)_* \M_X$. Also let $\mu_i\in \Q_{>0}$ be the positive rational number such that
\[
	K_{U_i}+\mu_i \M_{U_i}\sim_{\Q}a_iF_i
\] 
for some $a_i\in \Q$, where $F_i$ is a class of the fibre of $f_i$.

The next lemma is known as Pukhlikov's inequality.
\begin{lemma}[{\cite{pukhlikov}}] \label{lemma:pukhlikov_4n^2}
	Let $p$ be a smooth point of $X$. Assume that $p$ is a center of non-canonical singularities of the pair $(X, \lambda \M_X)$. Then
	\[
		\mult_{p}(M_1\cdot M_2)>4/\lambda^2
	\]
	for any two general elements $M_1$, $M_2 \in \M_X$.
\end{lemma}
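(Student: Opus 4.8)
Write $n=1/\lambda\in\Q_{>0}$, so the assertion is that $\mult_p(M_1\cdot M_2)>4n^2$; this is the classical $4n^2$-inequality of Pukhlikov, and the plan is to reproduce its proof. The first step is to blow up the smooth point: let $\sigma\colon Y\to X$ be the blow-up of $p$ with exceptional divisor $E\cong\P^2$, and set $m=\mult_p\M_X$. For two general members $M_1,M_2\in\M_X$ (which share no component, $\M_X$ being mobile) one has the elementary inequality $\mult_p(M_1\cdot M_2)\ge(\mult_p M_1)(\mult_p M_2)=m^2$, so the claim holds at once when $m>2n$. The whole problem is therefore the case $m\le 2n$.

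In that case one uses the crepant pull-back. Since $K_Y=\sigma^*K_X+2E$, the identity $K_Y+\lambda\widetilde{\M}_X+(\lambda m-2)E=\sigma^*(K_X+\lambda\M_X)$ shows that the pair $(Y,\lambda\widetilde{\M}_X+(\lambda m-2)E)$ is again not canonical, and since $\lambda m-2\le 0$ the divisor $E$ has non-negative discrepancy for it; hence the non-canonical centre $Z$ of the pulled-back pair is a proper subvariety of $E$, a point or a curve in $\P^2$. Iterating the blow-up over $Z$ produces a chain of infinitely near points $p=p_0,p_1,\dots,p_N$ over $p$, realising a non-canonical (maximal) valuation $E_N$, with strict-transform multiplicities $m_i=\mult_{p_i}\widetilde{\M}_X^{(i)}$.

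Two ingredients then finish it. First, intersection multiplicities only grow under blow-up along the chain, which gives $\mult_p(M_1\cdot M_2)\ge\sum_{i=0}^{N}m_i^2$, each blow-up in the chain contributing its squared multiplicity. Second, the Noether–Fano/non-canonicity condition for $E_N$ — namely $a(E_N;X)<\lambda\,\mult_{E_N}\M_X$ — translates, using that a point blow-up on a threefold contributes discrepancy $2$, into a linear inequality of the shape $\sum_i \nu_i m_i>n\sum_i a_i$ over the oriented graph of the chain, where $\nu_i,a_i$ are path-multiplicities and discrepancy increments depending only on the graph. Combining these via a Cauchy–Schwarz estimate adapted to the graph — the step in which the factor $2$ from the threefold point blow-up produces the constant $4$ — yields $\sum_i m_i^2>4n^2$, and hence $\mult_p(M_1\cdot M_2)>4n^2$.

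The main obstacle is precisely the second ingredient together with this combinatorial Cauchy–Schwarz step: verifying that the linear non-canonicity inequality forces the quadratic bound $\sum_i m_i^2>4n^2$ requires the careful bookkeeping on the resolution graph carried out in \cite{pukhlikov}, and once the easy case $m>2n$ is disposed of this is essentially the entire content. Accordingly, the cleanest course is to isolate the blow-up of $p$ and the reduction above, and then invoke the graph estimate directly from \cite{pukhlikov}; the remaining parts — the crepant pull-back identity and the monotonicity $\mult_p(M_1\cdot M_2)\ge\sum_i m_i^2$ — are formal.
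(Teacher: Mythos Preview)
The paper does not give its own proof of this lemma: it is stated with the attribution \cite{pukhlikov} and used as a black box. Your sketch is a faithful outline of the classical Pukhlikov argument (blow up the smooth point, reduce to the case $m\le 2n$, track the tower of infinitely near centres, and apply the graph/Cauchy--Schwarz estimate), so there is no discrepancy to report --- you have simply supplied what the paper chose to cite.
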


\begin{lemma}\label{lem1}
	Let $p$ be a smooth point of $X$. Then $p$ is not a centre of non-canonical singularities.
\end{lemma}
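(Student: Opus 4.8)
The plan is to run the standard $4n^2$-argument: assume for contradiction that the smooth point $p$ is a centre of non-canonical singularities of $(X,\lambda\M_X)$, apply Pukhlikov's inequality (Lemma~\ref{lemma:pukhlikov_4n^2}) to obtain a lower bound on $\mult_p(M_1\cdot M_2)$, and contradict it by an intersection-theoretic degree count on $X$. Since $-K_X\sim_{\Q}H$ by Definition~\ref{defin} and $K_X+\lambda\M_X\sim_{\Q}0$, we have $\M_X\sim_{\Q}\tfrac1\lambda H$, so for general $M_1,M_2\in\M_X$ the effective $1$-cycle $M_1\cdot M_2$ is numerically $\tfrac1{\lambda^2}H^2$. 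For the present deformation family $g(X)=2$ (Theorem~\ref{List}, item~$1$), hence $-K_X^3=2\bigl(g(X)-1\bigr)=2$ by Definition~\ref{defin}; together with $-2K_X\sim 2H$ this gives
\[
	(M_1\cdot M_2)\cdot(-2K_X)=\tfrac1{\lambda^2}(-2K_X)\cdot H^2=\tfrac2{\lambda^2}H^3=\tfrac4{\lambda^2}.
\]
On the other hand, Lemma~\ref{lemma:pukhlikov_4n^2} applied at the smooth point $p$ yields $\mult_p(M_1\cdot M_2)>\tfrac4{\lambda^2}$.

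To close the argument I would intersect the $1$-cycle $M_1\cdot M_2$ with a general surface $D$ in the linear system $|-2K_X|=|2H|$ passing through $p$. The crucial input is that $|2H|$ is base point free and defines a finite morphism $\phi\colon X\to\P^N$. Base point freeness can be read off the equations of Section~\ref{invol}: the base locus of $|2H|$ on $X$ is the image of $\{x_0=x_1=x_2=x_3=x_4=0\}\cap V$, which is empty in $\P(1^5,2)$; finiteness follows because $-K_X$ is ample, so $-2K_X$ has positive degree on every curve and contracts none. Consequently the common zero locus of all sections of $2H$ vanishing at $p$ equals the fibre $\phi^{-1}(\phi(p))$, which is finite; in particular it contains no curve. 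Hence a general $D\in|2H|$ through $p$ meets no component of $M_1\cdot M_2$, so $D$ and $M_1\cdot M_2$ intersect properly, and since $p\in D$ we obtain
\[
	\tfrac4{\lambda^2}=D\cdot(M_1\cdot M_2)\ \geq\ \bigl(D\cdot(M_1\cdot M_2)\bigr)_p\ \geq\ \mult_p(D)\cdot\mult_p(M_1\cdot M_2)\ \geq\ \mult_p(M_1\cdot M_2)>\tfrac4{\lambda^2},
\]
a contradiction. Thus no smooth point of $X$ is a centre of non-canonical singularities.

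The only real obstacle is the base-point-freeness and finiteness claim for $|-2K_X|$: the obvious candidate $|-K_X|=|H|$ has an entire base curve (the image of $\{x_0=x_1=x_2=0\}\cap V$), so one must pass to the Cartier divisor $-2K_X$, and it is precisely here that the small degree $-K_X^3=2$ of this family is used — the coefficient $2$ is the largest for which the resulting number $\tfrac4{\lambda^2}$ still contradicts the strict inequality of Lemma~\ref{lemma:pukhlikov_4n^2}. Everything else is a formal manipulation of intersection numbers on the threefold and requires no genericity of $X$ beyond what has already been assumed; the point $p$ itself need not be general, since $\phi$ is finite.
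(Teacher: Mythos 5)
Your argument is correct and follows the same overall strategy as the paper's proof: assume $p$ is a centre of non-canonical singularities, invoke Pukhlikov's inequality $\mult_p(M_1\cdot M_2)>4/\lambda^2$, and contradict it with the degree count $(-2K_X)\cdot M_1\cdot M_2=\tfrac{1}{\lambda^2}(2H)\cdot H^2=4/\lambda^2$ against a general member of $|-2K_X|$ through $p$. The only genuine divergence is in how the key technical point is established, namely that the subsystem $\mathscr{L}_p\subset|-2K_X|$ of divisors through $p$ has no curves in its base locus. The paper proves this by an explicit coordinate computation on the canonical cover: it writes $\pi^*|-2K_X|=\langle x_0^2,\ldots,x_3x_4\rangle|_V$, splits into cases according to which coordinates of a preimage $o$ of $p$ are nonzero, and identifies $\mathrm{Bs}\mathscr{G}_o$ as the finite set $\{o,\sigma(o),\nu(o),(\nu\circ\sigma)(o)\}$. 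You instead note that $|-2K_X|$ is base point free (the invariant quadratic monomials have empty common zero locus on $V$) and ample, so $\phi_{|-2K_X|}$ is finite and $\mathrm{Bs}\mathscr{L}_p=\phi_{|-2K_X|}^{-1}\bigl(\phi_{|-2K_X|}(p)\bigr)$ is automatically a finite set. This soft argument is valid and shorter, dispensing with the case analysis; what the paper's computation buys in exchange is the precise description of the base locus as the orbit of $o$ under the two involutions, which is not needed for this lemma. Your closing remarks about why one must pass from $|H|$ to $|2H|$, and why the coefficient $2$ is exactly what the strict inequality tolerates, are also accurate.
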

\begin{proof}
	Notice that 
		\[
			\mathscr{G}\coloneqq \pi^*|-2K_X|=\langle x_0^2, x_1^2, x_2^2, x_3^2, x_4^2, x_0x_1,x_0x_2,x_1x_2, x_3x_4\rangle|_V.
		\]
    Let $o=(a_0:\ldots:a_5)\in V$ be a point such that $\pi(o)=p$. Since $p$ is assumed to be a smooth point of $X$, then $o$ is not $\sigma$-invariant. Denote by $\mathscr{G}_o\subset \mathscr{G}$ the linear subsystem of section of $\mathscr{G}$ vanishing at $o$. Hence, up to a permutation of the coordinates, we have two cases:
    \begin{case}{\sc $a_3\neq 0$.}
    	So, we may assume that $a_3=1$. Then direct computations show that 
    	\begin{align*}
    		\mathscr{G}_o=&\langle x_0^2-a_0^2x_3^2, x_1^2-a_1^2x_3^2, x_2^2-a_2^2x_3^2, x_4^2-a_4^2x_3^2,\\
    		&x_0x_1-a_0a_1x_3^2, x_0x_2-a_0a_2x_3^2,x_1x_2-a_1a_2x_3^2, x_3x_4-a_4x_3^2 \rangle|_V.
    	\end{align*}
    	Therefore,
    	\[
    		\mathrm{Bs}\mathscr{G}_o =\{o,\sigma(o), \nu(o), (\nu\circ\sigma)(o)\},
    	\] 
    	where $\nu \colon V\to V$ is the involution induced by the double covering $g\colon V\to Q$ of a quadric $Q\subset \P^4$ ramified in a divisor of degree $8$.
    \end{case}
    
    \begin{case}{\sc $a_5\neq 0$.}
    	If $a_3\neq 0$ or $a_4\neq 0$, then, by the previous case, $\mathrm{Bs}\mathscr{G}_o$ does not contain any curves. Thus, it remains to consider the case when $o=(a_0:a_1:a_2:0:0:a_5)$. Notice, that one of $a_i$ for $i=0, 1, 2$ must be non-zero. Without loss of generality, we can assume that $a_0\neq 0$, hence $a_0=1$. Then
    	\[
    		\mathscr{G}_o=\langle x_1^2-a_1^2x_0^2, x_2^2-a_2^2x_0^2, x_3^2, x_4^2, x_0x_1-a_1x_0^2, x_0x_2-a_2x_0^2,x_1x_2-a_1a_2x_0^2, x_3x_4\rangle|_V.
    	\]
    Consequently, 
   		 \[
    		\mathrm{Bs}\mathscr{G}_o =\{o,\sigma(o)\}.
   		 \] 
    \end{case}
    Thus, we just have shown  that $\mathrm{Bs}\mathscr{G}_o$ contains no curves. Denote by $\mathscr{L}_p\subset |-2K_X|$ the linear subsystem of section of $|-2K_X|$ vanishing at $p$. Then $\mathrm{Bs}\mathscr{L}_p$ contains no curves as well.
    
    Now assume the contrary, i.e. $p$ is a centre of non-canonical singularities of $(X, \lambda \M_X)$. Let $M_1$, $M_2\in \M_X$ and $S\in \mathscr{L}_p$ be general elements. Then
		\[
			4/\lambda^2=S\cdot M_1\cdot M_2>4/\lambda^2.
		\]
	Contradiction with Lemma \ref{lemma:pukhlikov_4n^2}.
\end{proof}
\begin{remark}
	Alternatively, one can pullback the linear system $\M_X$ on $V$ via $\pi$ and prove similarly that if $P$ is a smooth point of $V$, then the pair $(V,\lambda\pi^*(\M_X))$ is canonical at $P$, see \cite[Theorem 3.8]{Iskovskih} or \cite[Theorem 2.2.1]{cheltsov2005birationally} for a modern proof.
\end{remark}
\begin{lemma}\label{lemma:curves}
	Let $C\subset X$ be an irreducible curve that does not pass through the singular points of $X$. Then $C$ is not a centre of non-canonical singularities.
\end{lemma}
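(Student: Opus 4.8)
The plan is to show that a curve $C\subset X$ avoiding the singular points cannot be a centre of non-canonical singularities of $(X,\lambda\M_X)$, by a standard multiplicity-count argument analogous to Lemma \ref{lem1}, but now using a surface that vanishes along $C$ rather than just at a point. First I would show that $C$ imposes the expected kind of restriction: pulling back to $V$ via $\pi$, the curve $C$ corresponds to a $\sigma$-invariant curve $\widetilde C = \pi^{-1}(C)$ (reducible or irreducible according to whether $C$ is in the branch locus, but since $C$ avoids the singular points it is étale over $C$, so $\widetilde C$ is a smooth-over-$C$ double cover). I want to find an element of $|-2K_X|$, or more precisely of the subsystem $\mathscr{L}_C\subset|-2K_X|$ of sections vanishing along $C$, that is still mobile enough; concretely I would argue as in Lemma \ref{lem1} using $\mathscr{G}=\pi^*|-2K_X|=\langle x_0^2,x_1^2,x_2^2,x_3^2,x_4^2,x_0x_1,x_0x_2,x_1x_2,x_3x_4\rangle|_V$, that for a general point $P$ on $\widetilde C$ the base locus $\mathrm{Bs}\,\mathscr{G}_P$ is finite (this is exactly the content of the two cases in the proof of Lemma \ref{lem1}), hence $\mathrm{Bs}\,\mathscr{L}_C$ has dimension at most one and in fact a general $S\in\mathscr{L}_C$ is irreducible and does not contain any fixed component through a general point of $C$.

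Next I would invoke the $4n^2$-type inequality along a curve. If $C$ is a centre of non-canonical singularities of $(X,\lambda\M_X)$, then for general $M_1,M_2\in\M_X$ one has $\mult_C(M_1\cdot M_2)>1/\lambda^2$ (the curve analogue of Pukhlikov's inequality: multiplicity of the $1$-cycle $M_1\cdot M_2$ along $C$ exceeds $4n^2/(2n)^2=1/\lambda^2$ when written in terms of $\lambda=1/n$ — more carefully, $M_1\cdot M_2 = (\text{component along }C) + (\text{residual})$ with the $C$-component having degree $>1/\lambda^2$ against a transversal surface; I would cite the curve version, e.g. the standard fact that a non-canonical centre along a curve forces $(M_1\cdot M_2)$ to contain $C$ with coefficient $>4n^2$). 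Then intersecting with a general $S\in\mathscr{L}_C$: since $S\in|-2K_X|$ and $S\supset C$ while $S$ contains no fixed surface near a general point of $C$, and $-2K_X\cdot C = (\text{a fixed positive number, namely }H\cdot C\cdot 2)$, I get
\[
\frac{4}{\lambda^2} = S\cdot M_1\cdot M_2 \;\geq\; \mult_C(M_1\cdot M_2)\cdot (S\cdot C) \;>\; \frac{1}{\lambda^2}\cdot (S\cdot C),
\]
so $S\cdot C<4$; combined with the fact that $S\cdot C$ equals twice the degree of $C$ with respect to the ample Cartier divisor $H$, this forces $H\cdot C=1$, i.e. $C$ is a line-like curve of minimal degree. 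The remaining work is to rule out such a curve: a curve with $H\cdot C=1$ would, under $\pi$, lift to a $\sigma$-invariant rational curve $\widetilde C\subset V$ of $(-K_V)$-degree $1$ or $2$, and one checks on the explicit model $V\subset\P(1^5,2)$ that the only curves of $(-K_V)$-degree $\leq 2$ either pass through a fixed point $p_i$ (excluded, since $C$ avoids $\mathrm{Sing}(X)$) or can be explicitly excluded using generality of $P_2,F_4,G_4$, or—if they survive—they are lines on $X$ which are not centres of non-canonical singularities because for a general member $M\in\M_X$ one has $\mult_C M\leq n$ by a direct degree estimate on the fibration $\phi_{\H_q}$, contradicting non-canonicity.

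The main obstacle I expect is the last step: controlling low-degree curves $C$ with $H\cdot C=1$ on $X$ and showing none of them can be a non-canonical centre. Unlike the point case, there is no a priori finiteness, so one must either (i) classify such curves on the explicit $V\subset\P(1^5,2)$—probably they are the images of lines $\{x_i=\text{const}\}$ in the $\P^2_{(x_0:x_1:x_2)}$ direction, which meet the base locus curve $Z$ or one of the $p_i$—or (ii) use the del Pezzo fibration $f\colon U\to\P^1$ constructed in Section \ref{dp1}: transport $C$ to $U$ and observe that a non-canonical centre along a curve in a degree-$1$ del Pezzo fibration that satisfies the $K_{3/2}^3$-condition is incompatible with Proposition \ref{okada} unless $C$ lies in a fibre, and fibre curves are easily handled. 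I would likely choose route (ii), as it reuses the machinery already set up, and the only genuine computation is verifying the nef threshold and $(-K_U)^3$ so that the $K_{3/2}^3$-condition applies—which is the same Noether-formula bookkeeping already performed for $\deg C=9$ and $K_F^2=1$ above.
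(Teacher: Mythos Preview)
Your opening reduction to $H\cdot C=1$ is essentially right, but the detour through a surface $S\in|-2K_X|$ that \emph{contains} $C$ is unnecessary and slightly muddled: all you need is that $-2K_X$ is nef, so writing $M_1\cdot M_2=mC+R$ with $m>1/\lambda^2$ and intersecting with $-2K_X$ (or with $H$) gives $H\cdot C<2$, hence $H\cdot C=1$. The work you do to control $\mathrm{Bs}\,\mathscr{L}_C$ plays no role.

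The real gap is in what you call the ``remaining work''. This is the entire content of the lemma, and neither route you propose works. Route (i) appeals to ``generality of $P_2,F_4,G_4$'', but the lemma is stated---and is used in Lemma~\ref{lem3} and the pliability corollary---for \emph{every} such $X$, not just general ones; there is no classification of degree-$1$ curves on $V$ that forces them through the $p_i$, and the statement is simply false that they can be ``explicitly excluded''. Route (ii) misreads Proposition~\ref{okada}: that result says that a birational map from the del Pezzo fibration to another Mori fibre space is square when $a\ge 0$; it says nothing about excluding non-canonical curve centres on $U_i$ (or on $X$), and there is no mechanism by which transporting $C$ to $U_i$ and invoking the $K^3_{3/2}$-condition rules out the centre.

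What the paper actually does is lift to $V$, where $-K_V\cdot Z=2$, and split into two cases. If $Z$ is irreducible, one cites the superrigidity-type argument of \cite[Theorem~2.3.1]{cheltsov2005birationally} showing that a non-canonical curve centre must have $-K_V$-degree $1$. The substantive case is $Z=Z_1\cup Z_2$ with $\sigma(Z_1)=Z_2$ and $-K_V\cdot Z_i=1$: one first shows neither $Z_i$ lies in the ramification divisor of the double cover $g\colon V\to Q$ (via a ruling-line degree count), then blows up both curves and proves $\beta^*(-2K_V)-E_1-E_2$ is nef by controlling the normal bundles $\mathcal N_{Z_i/V}$ (bounding $a-b\le 3$ via a $K3$ hyperplane section), and finally derives a numerical contradiction from
\[
(\beta^*(-\tfrac{1}{\lambda}K_V)-m_1E_1-m_2E_2)^2\cdot(\beta^*(-2K_V)-E_1-E_2)<0
\]
versus non-negativity against a nef class. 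This nefness-plus-intersection argument is the missing idea in your proposal.
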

\begin{proof}
	Firstly, recall that by definition, if a curve is a centre of non-canonical singularities, then it is irreducible. Suppose now that an irreducible curve $C$ is a centre of non-canonical singularities of $(X,\lambda \M_X)$. Then one has $\mult_C\M_X>1/\lambda$, e.g. by \cite[Exercise 6.18]{rational}. This implies that $H\cdot C=1$. Notice that the morphism $\pi\colon V \to X$ is \'{e}tale outside of the points $p_i$, hence the pair $(V,\lambda\pi^*(\M_X))$ is non-canonical at the curve $Z$, where $Z$ is the preimage of the curve $C$ via $\pi$. Therefore,  $-K_{V}\cdot Z=2$ by the projection formula. On the other hand, the proofs of \cite[Theorem 3.8, Theorem 3.10]{Iskovskih} show that the only centres of non-canonical singularities on $V$ are curves $\mathscr{C}\subset V$ such that $-K_V\cdot \mathscr{C}=1$, which immediately leads to a contradiction. However, the author has omitted several important technical details. So, for convenience of the reader, we will give an independent proof. In order to finish the proof of the lemma we need to show that: 
		\begin{itemize}
			\item if $Z$ is an irreducible curve, such that $-K_{V}\cdot Z=2$, then $Z$ cannot be a centre of non-canonical singularities;
			\item if $Z=Z_1\cup Z_2$, where $Z_2=\sigma(Z_1)$ and $-K_V\cdot Z_i=1$, then $Z_1$ and $Z_2$ cannot simultaneously be centres of non-canonical singularities.
		\end{itemize} 
	
		Assume that $Z$ is irreducible. In this case the proof of \cite[Theorem 2.3.1]{cheltsov2005birationally} implies that if $Z$ is a centre of non-canonical singularities, then $-K_V\cdot Z=1$. Contradiction.
		
		Assume now that $Z=Z_1\cup Z_2$, where $Z_2=\sigma(Z_1)$ and $-K_V\cdot Z_i=1$. Then $Z_i$ is a smooth rational curve. Notice that $Z_1\cap Z_2=\varnothing$, because the curve $C$ does not pass through the singular points of $X$ by assumption. Recall that we denote as $\nu \colon V\to V$ the involution induced by the double covering $g\colon V\to Q$ of a quadric $Q\subset \P^4$. Notice, that the involutions $\sigma$ and $\nu$ do commute, hence either both $Z_1$ and $Z_2$ are contained in the ramification divisor or neither.
		
		Firstly, we prove that $\nu(Z_i)\neq Z_i$, i.e. the curves $Z_1$ and $Z_2$ are not contained in the ramification divisor of $g$. Assume the contrary. Let $Y\subset Q$ be a hyperplane section containing $g(Z_1)$ and $g(Z_2)$. Then $Y\cong \P^1\times \P^1$ and both curves  $g(Z_1)$ and $g(Z_2)$ lie in the same ruling class of $Y$. Let $L$ be a general curve on $V$ such that $g(L)\subset Y$ is a general curve from another ruling class. Then $-K_V\cdot L=2$ and $L$ intersects $Z_1$ and $Z_2$ each in a single point. Let $S$ be a general surface of $\pi^*(\M_X)$. Then one has $L\not\subset S$, because the linear system $\pi^*(\M_X)$ has no fixed components. So,
			\[
			2/\lambda=S\cdot L\geq \sum_{O\in L\cap Z} \mult_O S \mult_O L \geq \sum_{O\in L\cap Z}  \mult_Z S >2/\lambda.
			\]
			Contradiction. Thus, the curves $Z_i$ are not $\nu$-invariant.

		Let $\beta\colon W\to V$ be the blow-up of $Z_1$ and $Z_2$, and $E_1$, $E_2$ be the $\beta$-exceptional divisors over $Z_1$ and $Z_2$ respectively. Let's proof that the divisor $\beta^*(-2K_V)-E_1-E_2$ is nef. Firstly, denote by $\L\subset |-2K_V|$ the linear subsystem of sections of $|-2K_V|$ containing both $Z_1$ and $Z_2$. We want to show that 
		\[
			\mathrm{Bs}\L\subseteq\{Z_1,\nu(Z_1), Z_2,\nu(Z_2)\}.
		\] 
		Indeed, let's consider the linear subsystem $\mathcal{Q}\subset |\O_Q(2)|$ of sections of $|\O_Q(2)|$ containing both $g(Z_1)$ and $g(Z_2)$. Further, let $\H_1, \H_2 \subset |\O_Q(1)|$ be the linear subsystems of sections of $|\O_Q(1)|$  containing $g(Z_1)$ and $g(Z_2)$ respectively. Then $U_1 +U_2 \in \mathcal{Q}$, where $U_i\in \H_i$, and $\mathrm{Bs}|U_1+U_2|=\{g(Z_1), g(Z_2)\}$. Since one has $|U_1+U_2| \subset \mathcal{Q}$ and $g^*(\mathcal{Q})\subset \L$, then $\mathrm{Bs}\L\subseteq\{Z_1,\nu(Z_1), Z_2,\nu(Z_2)\}$.

		Therefore, in order to prove that $\beta^*(-2K_V)-E_1-E_2$ is nef, it is enough to show that it intersects positively with the stricts transforms of the curves $\nu(Z_i)$ via $\beta$ and curves contained in the $\beta$-exceptional divisors. 
		By construction,
		\[
		-2K_V\cdot \nu (Z_i)=2.
		\]
		Consequently,
		\[
		(\beta^*(-2K_V)-E_1-E_2)\cdot \beta_*^{-1}(\nu (Z_i))\geq2 -1-1\geq 0.
		\]
		
		Now let's consider curves contained in the $\beta$-exceptional divisors. Let $\N_{Z_i/V}$ be the normal sheaf of the curve $Z_i$ on $V$. Hence,
		\[
			\N_{Z_i/V}\cong \O_{\P^1}(a)\oplus \O_{\P^1}(b)
		\]
		for some integers $a$, $b$ and $a\geq b$. Using the exact sequence 
		\[
		\begin{CD}
			0@>>>\mathcal{T}_{Z_i} @>>>\mathcal{T}_{V}|_{Z_i} @>>> \N_{Z_i/V}@>>>0,
		\end{CD}
		\]
		one considers the top exterior powers to get
		\[
		\begin{CD}
			0@>>>\O_{Z_i}(-K_{Z_i}) @>>> \O_{V}(-K_V)|_{Z_i} @>>> \det\N_{Z_i/V}@>>>0.
		\end{CD}
		\]
		Since 
		\[
		\det \N_{Z_i/V}\cong \det (\O_{\P^1}(a)\oplus \O_{\P^1}(b))\cong \O_{\P^1}(a)\otimes \O_{\P^1}(b)\cong \O_{\P^1}(a+b),
		\]
		then by taking degrees of the previous exact sequence one has
		\[
		a+b=-K_V\cdot Z_i +2g(Z_i)-2=-1.
		\]
		
		Let $S$ be a general hyperplane section of $V$ containing the curve $Z_i$. Then $S$ is a smooth $K3$ surface, because $Z_i$ is not contained in the ramification divisor of $g$. Therefore, $\N_{Z_i/S}\cong\O_{\P^1}(-2)$. Using the exact sequence 
		\[
		\begin{CD}
			0@>>>\N_{Z_i/S} @>>>\N_{Z_i/V} @>>> \N_{S/V}|_{Z_i}@>>>0,
		\end{CD}
		\]
		one has $b\geq -2$. In particular, $a-b\leq 3$. Let $\s_i$ be the exceptional section of the Hirzebruch surface $\beta|_{E_i}\colon E_i \to Z_i$. Then
		\[
		(\beta^*(-2K_V)-E_1-E_2)\cdot \s_i = \frac{5+b-a}{2}> 0.
		\]
		Thus, $\beta^*(-2K_V)-E_1-E_2$ is nef.

		Now let $\M_W = \beta_*^{-1}(\pi^* \M_X)$. Denote as $m_i=\mult_{Z_i}(\pi^*\M_X)$. Then, on the one hand
		\begin{align*}
		&(\beta^*(-\tfrac{1}{\lambda}K_V)-m_1E_1-m_2E_2)^2\cdot(\beta^*(-2K_V)-E_1-E_2)\\
		&=-3m_1^2-3m_2^2-\frac{2m_1}{\lambda}-\frac{2m_2}{\lambda}+\frac{8}{\lambda^2}<0,
		\end{align*}
		but on the other hand, for general surfaces $S_1$ and $S_2$ from $\M_W$,
		\begin{align*}
		&(\beta^*(-\tfrac{1}{\lambda}K_V)-m_1E_1-m_2E_2)^2\cdot(\beta^*(-2K_V)-E_1-E_2)\\
		&=S_1\cdot S_2 \cdot (\beta^*(-2K_V)-E_1-E_2)\geq 0.
		\end{align*}
		Contradiction.
		\end{proof}

\begin{lemma}\label{lem2}
	Assume that $a_i\geq 0$ for some $i$. Then $\Phi_i$ is a square equivalence. 
\end{lemma}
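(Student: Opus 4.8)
The plan is to derive the statement from Okada's Proposition~\ref{okada}, applied to the del Pezzo fibration $f_i\colon U_i\to\P^1$ and the birational map $\Phi_i\colon U_i\dashrightarrow W$. Recall from Section~\ref{dp1} that $U_i$ is obtained from the Kawamata blow-up $\kappa_i\colon\widetilde{X}_i\to X$ of $q_i$, with exceptional divisor $E_i\cong\P^2$, by the two Atiyah flops in the curves $C_1,C_2$; since these curves avoid $q_2,\ldots,q_8$, the threefold $U_i$ has exactly seven terminal cyclic quotient singular points of type $\frac{1}{2}(1,1,1)$ and a general fibre of $f_i$ is a del Pezzo surface of degree~$1$, so the standing hypotheses of Proposition~\ref{okada} are met. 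The linear system $\M_{U_i}$ is the strict transform via $\Phi_i$ of a very ample complete linear system on $W$, and from $K_{U_i}+\mu_i\M_{U_i}\sim_{\Q}a_iF_i$ with $\mu_i>0$ one gets
\[
\M_{U_i}\sim_{\Q}-\tfrac{1}{\mu_i}K_{U_i}+\tfrac{a_i}{\mu_i}F_i,
\]
so the coefficient of $F_i$ is nonnegative exactly because $a_i\geq 0$. Hence everything reduces to verifying the $K^3_{3/2}$-condition $(-K_{U_i})^3+\nef(U_i/\P^1)\leq\tfrac{3}{2}$.

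For the first summand I would use that $(-K)^3$ is a flop invariant, so $(-K_{U_i})^3=(-K_{\widetilde{X}_i})^3$. Writing $K_{\widetilde{X}_i}=\kappa_i^*K_X+\tfrac{1}{2}E_i$ and using $(\kappa_i)_*E_i=(\kappa_i)_*(E_i^2)=0$ together with $E_i^3=4$ (as $\O_{E_i}(E_i)\cong\O_{\P^2}(-2)$, consistently with the identity $-\tfrac14E_i^3=-1$ used earlier), one obtains $(-K_{\widetilde{X}_i})^3=(-K_X)^3-\tfrac18E_i^3=2-\tfrac12=\tfrac32$, where $(-K_X)^3=2$ because $g(X)=2$ for this deformation family.

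For the second summand I would compute $\nef(U_i/\P^1)$ through Remark~\ref{rem}. Let $C_j^+\subset U_i$ be one of the two flopped curves. As the Atiyah flop of a $K$-trivial curve with normal bundle $\O_{\P^1}(-1)\oplus\O_{\P^1}(-1)$, it satisfies $-K_{U_i}\cdot C_j^+=0$; and it cannot lie in a fibre of $f_i$, since on a fibre $-K_{U_i}$ restricts to the ample anticanonical divisor of a del Pezzo surface of degree~$1$, which would force $-K_{U_i}\cdot C_j^+>0$. Hence $F_i\cdot C_j^+>0$. Moreover $C_j^+$ is contracted by the small contraction that reverses the corresponding Atiyah flop, so $[C_j^+]$ spans an extremal ray of $\overline{\mathrm{NE}}(U_i)$; as that ray is $K$-trivial whereas the ray generated by curves contracted by $f_i$ is $K$-negative, $[C_j^+]$ must lie on the ray $R$ of Remark~\ref{rem}. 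Therefore $\nef(U_i/\P^1)=-\dfrac{-K_{U_i}\cdot C_j^+}{F_i\cdot C_j^+}=0$, so $(-K_{U_i})^3+\nef(U_i/\P^1)=\tfrac32$, the $K^3_{3/2}$-condition holds with equality, and Proposition~\ref{okada} yields that $\Phi_i$ is a square equivalence.

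The main obstacle is exactly that the $K^3_{3/2}$-condition is attained with no slack, so every inequality has to be exact: one must pin down the contribution $-\tfrac18E_i^3$ of the Kawamata blow-up to $(-K_{U_i})^3$ correctly, and — more subtly — one must confirm that $\rho(U_i)=2$ and that the class of the flopped curve genuinely lies on the extremal ray of $\overline{\mathrm{NE}}(U_i)$ and not in its interior, i.e.\ that $-K_{U_i}$ is nef, which is precisely what makes the nef threshold vanish.
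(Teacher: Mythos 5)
Your proof is correct and follows the same route as the paper: compute $(-K_{U_i})^3=(-K_{\widetilde{X}_i})^3=(-K_X)^3-\tfrac18E_i^3=\tfrac32$, observe that $\nef(U_i/\P^1)=0$ because the extremal ray not contracted by $f_i$ is generated by the $K$-trivial flopped curves, and invoke Proposition~\ref{okada}. Your version simply spells out in more detail the verification of Okada's hypotheses and the identification of the extremal ray, which the paper leaves implicit.
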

\begin{proof}
Let $\kappa_i\colon \widetilde{X}_{i}\to X$ be the Kawamata blow-up of $q_i$ and $E_i$ be the $\kappa_i$-exceptional divisor. Then
	\[
(-K_{U_i})^3=(-K_{\widetilde{X}_{i}})^3=(\kappa_i^*(-K_X)-\tfrac{1}{2}E_i)^3=\kappa_i^*(-K_X)^3-\tfrac{1}{8}E_i^3=3/2,
	\]
and 
	\[
 		\nef (U_i/\P^1)=0
	\]
by Remark \ref{rem}, because in our case $\xi_i$ is the class of the flopping curves, so $-K_{U_i}\cdot \xi_i=0$. Thereby, $U_i/\P^1$ satisfies the $K_{3/2}^3$-condition, and we are done by Proposition \ref{okada}.
\end{proof}

\begin{lemma}\label{lem3}
	Assume that all $a_i<0$. Then $\Phi\colon X \dashrightarrow W$ is an isomorphism.
\end{lemma}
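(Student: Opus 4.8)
The plan is to run the Noether--Fano argument on $X$ and eliminate every possible centre of non-canonical singularities. Suppose $\Phi$ is not an isomorphism. Since $K_X+\lambda\M_X\sim_\Q 0$ is nef, the Noether--Fano inequality forces $(X,\lambda\M_X)$ to be non-canonical; let $\mathcal Z$ be a centre of non-canonical singularities. By Lemma~\ref{lem1} the locus $\mathcal Z$ is not a smooth point of $X$, and by Lemma~\ref{lemma:curves} if $\mathcal Z$ is a curve it must pass through one of the singular points $q_i$. So it remains to rule out the two cases \textbf{(a)} $\mathcal Z=q_i$ and \textbf{(b)} $\mathcal Z$ a curve through some $q_i$.

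First I would reinterpret the hypothesis through the link $\rho_i=\psi_i\circ\kappa_i^{-1}$. Fixing $i$, write $\kappa_i^{*}\M_X=\widetilde\M_i+m_iE_i$ with $m_i=\mult_{E_i}\M_X$. From $K_{\widetilde X_i}=\kappa_i^{*}K_X+\tfrac12 E_i$ one gets $K_{\widetilde X_i}+\lambda\widetilde\M_i\sim_\Q(\tfrac12-\lambda m_i)E_i$; since $\psi_i$ is an isomorphism in codimension one and the general fibre $F_i$ of $f_i$ is the $\psi_i$-image of a general member of $|\kappa_i^{*}H_{q_i}-E_i|$ (so that $(\psi_i)_{*}E_i\sim_\Q -2K_{U_i}-2F_i$), pushing forward and comparing with $K_{U_i}+\mu_i\M_{U_i}\sim_\Q a_iF_i$ yields, after a direct computation,
\[
a_i=\frac{2\lambda m_i-1}{2(1-\lambda m_i)}.
\]
The values $\lambda m_i\ge 1$ are impossible: if $\lambda m_i>1$ the resulting $\M_{U_i}$ has positive $K_{U_i}$-coefficient, so $\M_{U_i}$ restricted to a general fibre would be a positive multiple of the canonical class of a del Pezzo surface of degree $1$, which cannot happen for a mobile system; if $\lambda m_i=1$ then $\M_{U_i}$ is composed with $f_i$, absurd since it is the strict transform of a very ample system on the threefold $W$. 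Hence $\lambda m_i<1$ always, and the displayed formula shows $a_i<0\iff\lambda m_i<\tfrac12$; in particular the hypothesis ``all $a_i<0$'' is equivalent to $\mult_{E_i}\M_X<\tfrac1{2\lambda}$ for every $i$. Case \textbf{(a)} is now immediate: if $q_i$ is a centre of $(X,\lambda\M_X)$ then, on the smooth canonical covering, its unique preimage $p_i$ is a centre of non-canonical singularities of $(V,\lambda\pi^{*}\M_X)$; as $p_i$ is a smooth point this forces $\mult_{p_i}(\pi^{*}\M_X)>1/\lambda$, and since $\pi$ is the quotient by $\langle\sigma\rangle$ and $E_i$ is the Kawamata blow-up divisor one has $\mult_{p_i}(\pi^{*}\M_X)=2m_i$, whence $\lambda m_i>\tfrac12$ — contradicting the hypothesis.

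For case \textbf{(b)}, let $C\ni q_i$ be the centre and again pass to $V$. The curve $\pi^{-1}(C)$ is either irreducible, in which case $-K_V\cdot\pi^{-1}(C)=2$ and it cannot be a centre of non-canonical singularities (exactly as in the first bullet of the proof of Lemma~\ref{lemma:curves}), or $\pi^{-1}(C)=Z_1\cup Z_2$ with $Z_2=\sigma(Z_1)$, $-K_V\cdot Z_i=1$ and $Z_1\cap Z_2\ni p_i$; because $\sigma$ acts as $-\mathrm{id}$ on the tangent space at $p_i$, the branches $Z_1$ and $Z_2$ are tangent there. I would then repeat the argument of Lemma~\ref{lemma:curves} — blow up $Z_1$ and $Z_2$, show that $\beta^{*}(-2K_V)-E_1-E_2$ is nef, and derive the same contradictory chain of inequalities on the triple self-intersection appearing at the end of that proof — but now $\beta$ is no longer the blow-up of a disjoint union of smooth curves, so the nef-ness and the self-intersection computations must be carried out keeping track of the tangency at $p_i$ and of the bound $\mult_{p_i}(\pi^{*}\M_X)<1/\lambda$ supplied by the hypothesis. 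Once all cases are excluded, $(X,\lambda\M_X)$ is canonical, so $\Phi$ is an isomorphism by Noether--Fano. The hard part is exactly this last case: adapting the delicate blow-up and intersection-number argument of Lemma~\ref{lemma:curves} to two components of $\pi^{-1}(C)$ meeting (tangentially) at the fixed point $p_i$; everything else is either a formal consequence of the earlier lemmas or the elementary computation above.
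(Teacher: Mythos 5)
Your skeleton (Noether--Fano plus exclusion of all possible centres) is the right one, and your numerical identity relating $a_i$ to $\lambda m_i$ agrees, up to the normalisation of $\mu_i$, with the paper's relation $d-2m_i=-a_i$. But case \textbf{(a)} contains a false step. You deduce that $p_i$ is a centre of non-canonical singularities of $(V,\lambda\pi^{*}\M_X)$ from the fact that $q_i$ is one for $(X,\lambda\M_X)$. That implication goes the wrong way: for a divisor $E$ over $q_i$ whose corresponding divisor $E'$ over $V$ has ramification index $r$, the log discrepancies satisfy $A_{V,\lambda\pi^{*}\M_X}(E')=r\,A_{X,\lambda\M_X}(E)$, so $A_X(E)<1$ only gives $A_V(E')<r$. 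When $r=2$ --- which is exactly what happens for the Kawamata blow-up divisor $E_i$, whose preimage is the ordinary blow-up of the smooth point $p_i$ --- this tells you nothing about canonicity upstairs ($\tfrac12<\lambda m_i\leq 1$ makes $E_i$ a non-canonical witness on $X$ while $\widetilde E_i$ is still canonical on $V$). Canonicity ascends from $X$ to $V$, not the other way; a simple surface example (an $A_1$ point with a boundary through it) shows the converse genuinely fails. The correct tool, and the one the paper uses, is Kawamata's result \cite[Lemma 7]{kawamata1996divisorial} applied directly on $X$: if $(X,\lambda\M_X)$ is not canonical at the $\frac12(1,1,1)$ point $q_i$, then $\mult_{E_i}\M_X>\frac{1}{2\lambda}$, which is precisely $a_i>0$.

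Case \textbf{(b)} is left by you as an unfinished ``hard part,'' but it needs no new geometry: if a curve through $q_j$ is a centre of non-canonical singularities, then in particular the germ of the pair at $q_j$ is not canonical, so Kawamata's lemma applies verbatim and yields the same contradiction $m_j>\frac{d}{2}$, i.e.\ $a_j>0$. This is how the paper disposes of both remaining cases in one line; your proposed adaptation of the blow-up argument of Lemma \ref{lemma:curves} to two tangent branches through $p_i$ is unnecessary and, as you concede, not carried out. So as written the proposal has an invalid implication in case (a) and an essential omission in case (b); both are repaired by the single citation of Kawamata's lemma.
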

\begin{proof}
	Assume the contrary. Then we have the following commutative diagram:
\[
	\xymatrix{
		&W&\\
	X\ar@{-->}[ru]^{\Phi}\ar@{-->}[rr]^{\rho_i}&& U_i \ar@{-->}[lu]_{\Phi_i}\\
	&\widetilde{X}_i\ar[lu]^{\kappa_i}\ar@{-->}[ru]_{\psi_i}&
	}
\]	
Notice $\rho_i \colon X\dashrightarrow U_i$ is the composition of the inverse map to the Kawamata blow-up $\kappa_i \colon \widetilde{X}_i \to X$ and two flops $\psi_i \colon \widetilde{X}_i\dashrightarrow U_i$. Consequently, without loss of generality, we can identify linear systems on $U_i$ with linear systems on $\widetilde{X}_i$ via $(\psi_i)_*$. We write $\M_X\sim_{\Q}d H$, and hence we have 
\begin{align*}
&\kappa_i^*H=H_{\widetilde{X}_i}+E_i;\\
&K_{\widetilde{X}_i}=-H_{\widetilde{X}_i}-\tfrac{1}{2}E_i;\\
&\M_{\widetilde{X}_i}=\kappa_i^*\M_{X}-m_iE=dH_{\widetilde{X}_i}+(d-m_i)E_i.
\end{align*}
Therefore,
\[
dH_{\widetilde{X}_i}+(d-m_i)E_i=-n_iK_{\widetilde{X}_i}+a_iF=(n_i+a_i)H_{\widetilde{X}_i}+\tfrac{n_i}{2}E_i.
\]
Thus, 
\begin{align}\label{canon}
	d-2m_i=-a_i.
\end{align}
By \cite[Lemma 7]{kawamata1996divisorial}, Lemmas \ref{lem1}, \ref{lemma:curves}, a point $q_j$ must be a centre of non-canonical singularities of the pair $(X,\tfrac{1}{d}\M_X)$ for some $j$. However all $-a_i>0$, so the pair $(X,\tfrac{1}{d}\M_X)$ is canonical, and thus $\Phi$ is an isomorphism by the Noether-Fano inequality.
\end{proof}

\begin{lemma} \label{lemma:singular_points}
	Let $q_i$ be a centre of non-canonical singularities of the pair $(X, \lambda \M_X)$. Then $\rho_i\colon U_i\dashrightarrow W$ a square equivalence.  
\end{lemma}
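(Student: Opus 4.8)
The plan is to reduce everything to Lemma \ref{lem2} by showing that the hypothesis forces $a_i\geq 0$; the map whose square equivalence is asserted is $\Phi_i=\Phi\circ\rho_i^{-1}\colon U_i\dashrightarrow W$. First I would reuse the normalisations set up in the proof of Lemma \ref{lem3}: writing $\M_X\sim_{\Q}dH$ we have $\lambda=1/d$ since $-K_X\sim_{\Q}H$, so the relevant pair is $(X,\tfrac{1}{d}\M_X)$; on the Kawamata blow-up $\kappa_i\colon\widetilde{X}_i\to X$ of $q_i$ one has $\M_{\widetilde{X}_i}=\kappa_i^*\M_X-m_iE_i$ with $m_i=\mathrm{mult}_{E_i}\M_X$, and the identity \eqref{canon} reads $a_i=2m_i-d$.

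The core of the argument is a single application of \cite[Lemma 7]{kawamata1996divisorial}. Since $q_i$ is a centre of non-canonical singularities of $(X,\tfrac{1}{d}\M_X)$ and is a terminal cyclic quotient singularity of type $\tfrac{1}{2}(1,1,1)$, over which the Kawamata blow-up $\kappa_i$ is the unique divisorial extraction, that lemma forces the discrepancy of $E_i$ to be negative: $a(E_i;X,\tfrac{1}{d}\M_X)=\tfrac{1}{2}-\tfrac{m_i}{d}<0$, hence $m_i>d/2$. Combined with $a_i=2m_i-d$ this gives $a_i>0$, and Lemma \ref{lem2} then applies directly to conclude that $\Phi_i$ is a square equivalence.

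The only step carrying genuine content is the invocation of Kawamata's lemma: one must be certain that a non-canonical singularity lying over $q_i$ cannot be hidden inside a proper subvariety of $E_i$ while $E_i$ itself stays canonical. Precisely because $q_i$ has type $\tfrac{1}{2}(1,1,1)$ — so that by \cite{kawamata1996divisorial} every divisorial extraction centred at $q_i$ is $\kappa_i$ — this is impossible, and the passage to the clean inequality $m_i>d/2$ is justified. Everything before and after this point is formal, reusing verbatim the divisor-class computation of Lemma \ref{lem3} and the conclusion of Lemma \ref{lem2}.
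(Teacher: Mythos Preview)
Your proof is correct and is essentially the paper's own argument made explicit: the paper simply says ``Follows directly from \eqref{canon} and Proposition \ref{okada},'' and you have unpacked this by invoking Kawamata's lemma to convert the non-canonicity at $q_i$ into $m_i>d/2$, hence $a_i>0$ via \eqref{canon}, and then applying Lemma \ref{lem2} (whose proof is precisely the verification of the $K_{3/2}^3$-condition needed for Proposition \ref{okada}).
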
	 
\begin{proof}
Follows directly from $\eqref{canon}$ and Proposition \ref{okada}.
\end{proof}

Finally, Lemmas \ref{lem1}, \ref{lemma:curves}, \ref{lem2}, \ref{lem3}, \ref{lemma:singular_points}  and the Noether-Fano inequality imply the following theorem.

\begin{theorem}\label{theorem:final}
	Let $\Phi\colon X \dashrightarrow W$ be a birational map to a Mori fibre space $W/B$. Then one of the following holds:
	\begin{itemize}
		\item[\textup{1)}] $B=\mathrm{pt}$ and $\Phi$ is an isomorphism;
		\item[\textup{2)}] $B\cong \P^1$ and for $i=1,\ldots,8$ there exists the following commutative diagram:
	\[
	\xymatrix{X\ar@{-->}_{\rho_i}[dr] \ar@{-->}@/^1pc/[rrd]^{\Phi}&\\
		&U_i\ar@{-->}[r]^{\chi} \ar[d]_{f_i}&W\ar[d]\\
		&\P^1\ar^{\omega}[r]&B
	}
    \]	
    where $\chi\colon U_i \dashrightarrow W$ is a birational map and $\omega \colon \P^1\to B$ is an isomorphism.
	\end{itemize}
\end{theorem}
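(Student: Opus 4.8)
The plan is to run the Noether--Fano method and then feed its output into the lemmas already at our disposal. If $\Phi$ is an isomorphism, then $W\cong X$; since $X$ has Picard rank one, the Mori fibre space structure $W\to B$ must have zero-dimensional base, so $B=\mathrm{pt}$ and we are in case $1)$. Assume from now on that $\Phi$ is not an isomorphism. Let $\M_X$ be the strict transform on $X$ of a very ample complete linear system on $W$ and let $\lambda\in\Q_{>0}$ be defined by $K_X+\lambda\M_X\sim_{\Q}0$. By the Noether--Fano inequality (\cite[Theorem 4.2]{corti1995factor}) the pair $(X,\lambda\M_X)$ is not canonical.

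Next I would pin down where the non-canonicity sits. A centre of non-canonical singularities of $(X,\lambda\M_X)$ is either a point or an irreducible curve. By Lemma~\ref{lem1} it is not a smooth point of $X$, and by Lemma~\ref{lemma:curves} it is not an irreducible curve avoiding $\mathrm{Sing}(X)$. The remaining possibilities---a curve through one of the points $q_i$, or a divisorial valuation with centre some $q_i$ that is not extracted by the Kawamata blow-up---are excluded using Kawamata's classification of divisorial contractions to a terminal $\tfrac12(1,1,1)$-point (\cite[Lemma 7]{kawamata1996divisorial}). The conclusion is exactly the contrapositive of Lemma~\ref{lem3}: since $\Phi$ is not an isomorphism, it cannot happen that $a_i<0$ for every $i$, so $a_i\geq 0$ for some $i\in\{1,\dots,8\}$. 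I expect this passage---turning ``the pair is not canonical'' into ``one of the $q_i$ fails the canonical threshold'', with no genericity assumption on $X$ to lean on---to be the delicate point, and it is where the local geometry of the $\tfrac12(1,1,1)$-points is really used.

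Once $a_i\geq 0$ for some $i$, Lemma~\ref{lem2} applies: it records that $(-K_{U_i})^3+\nef(U_i/\P^1)=3/2$, so the $K^3_{3/2}$-condition holds, and then Okada's Proposition~\ref{okada} gives that $\Phi_i=\Phi\circ\rho_i^{-1}\colon U_i\dashrightarrow W$ is a square equivalence (equivalently, this follows from the identity \eqref{canon} together with Lemma~\ref{lemma:singular_points}). By Definition~\ref{definition:square} a square equivalence carries a birational map $\P^1\dashrightarrow B$ that is an isomorphism on generic fibres; since $B$ is the base of a Mori fibre space it is a smooth projective curve, so this map is an isomorphism $\omega\colon\P^1\to B$ and $B\cong\P^1$. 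Putting $\chi=\Phi_i$ and $\Phi=\chi\circ\rho_i$ produces the commutative diagram of case $2)$ for this particular $i$.

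Finally, the statement asks for the diagram for every $i\in\{1,\dots,8\}$. To get this I would use that the eight models $U_j/\P^1$ are all mutually square equivalent: each satisfies the $K^3_{3/2}$-condition, so each birational map $U_i\dashrightarrow U_j$ (arising as $\rho_j\circ\rho_i^{-1}$) is a square equivalence by the analysis of del Pezzo fibrations of degree $1$ in \cite{okada2020birational}; composing it with the square equivalence $\Phi_i$ makes every $\Phi_j$ a square equivalence, which gives the diagram for each $j$. Collecting the two cases $1)$ and $2)$ completes the argument.
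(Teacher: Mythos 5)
Your main line of argument --- Noether--Fano, exclusion of smooth points (Lemma \ref{lem1}) and of curves avoiding $\mathrm{Sing}(X)$ (Lemma \ref{lemma:curves}), and the resulting alternative ``all $a_i<0$, so $\Phi$ is an isomorphism (Lemma \ref{lem3}), versus $a_i\geq 0$ for some $i$, so $\Phi_i$ is a square equivalence (Lemma \ref{lem2})'' --- is exactly the proof the paper intends, and it already establishes the theorem once the statement is read correctly: the phrase ``for $i=1,\ldots,8$ there exists'' is an (awkwardly phrased) existential quantifier, i.e.\ the diagram is produced for \emph{some} $i\in\{1,\ldots,8\}$, namely an index for which $q_i$ is a centre of non-canonical singularities of $(X,\lambda\M_X)$.

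Your final paragraph, however, contains a genuine error. The eight models $U_1/\P^1,\ldots,U_8/\P^1$ are \emph{not} mutually square equivalent: if they were, the pliability of $X$ would be at most $2$, whereas the Corollary deduced from this very theorem gives $|\mathcal{P}(X)|=9$ for general $X$; its proof shows that a birational map $U_i\dashrightarrow U_j$ would induce a biregular automorphism of $X$ permuting $q_i$ and $q_j$, which for general $X$ does not exist, so the $U_i$ are in general not even birational to one another over the base. The appeal to Proposition \ref{okada} for the map $\rho_j\circ\rho_i^{-1}$ also does not go through: that proposition requires the strict transform on $U_i$ of a very ample system on $U_j$ to lie in $|-nK_{U_i}+aF_i|$ with $a\geq 0$, and nothing guarantees this; moreover the fibres of $f_i$ come from $\H_{q_i}$ and those of $f_j$ from $\H_{q_j}$, so $\rho_j\circ\rho_i^{-1}$ does not respect the two fibration structures at all. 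Delete that paragraph and conclude with the single index $i$ singled out by Lemma \ref{lem2} (equivalently, by \eqref{canon} and Lemma \ref{lemma:singular_points}); the rest of your argument is sound and coincides with the paper's.
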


Now, Main Theorem \ref{main2} implies from the construction provided in Section \ref{dp1} and Theorem \ref{theorem:final}.

\begin{corollary}
	In the notation and assumptions above the following holds: 
	\begin{itemize}
		\item[\textup{1)}] $2\leq|\mathcal{P}(X)|\leq 9$. For a general $X$ one has $|\mathcal{P}(X)|=9$;
		\item[\textup{2)}] $\Aut(X)=\mathrm{Bir}(X)$;
		\item[\textup{3)}] $X$ is not birational to a conic bundle;
		\item[\textup{4)}] $X$ is not rational.
	\end{itemize}
\end{corollary}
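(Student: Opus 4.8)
The plan is to deduce all four statements from Theorem~\ref{theorem:final} (equivalently, from Main Theorem~\ref{main2}), which enumerates, up to square equivalence, every Mori fibre space birational to $X$. I would begin with 2). Since $X$ is a $\Q$-factorial terminal Fano threefold of Picard rank $1$, it is itself a Mori fibre space over a point; so given $\Phi\in\mathrm{Bir}(X)$, applying Theorem~\ref{theorem:final} to $\Phi\colon X\dashrightarrow W=X$ puts us in case 1), because the base of $X/\mathrm{pt}$ is a point. Hence $\Phi$ is an isomorphism and $\mathrm{Bir}(X)=\Aut(X)$.

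Item 3) is then immediate: a conic bundle is a Mori fibre space over a surface, but Theorem~\ref{theorem:final} only allows the bases $\mathrm{pt}$ and $\P^1$, so $X$ is not birational to a conic bundle. For 4), if $X$ were rational it would be birational to $\P^3$; applying Theorem~\ref{theorem:final} to the birational map $X\dashrightarrow\P^3$ (a Mori fibre space over a point) would give, via case 1), an isomorphism $X\cong\P^3$, which is absurd since $X$ is singular. (Alternatively one invokes 3) directly, a rational threefold being birational to the conic bundle $\P^1\times\P^2\to\P^2$.)

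Item 1) is the crux. Theorem~\ref{theorem:final} gives $\mathcal{P}(X)\subseteq\{[X],[U_1],\dots,[U_8]\}$, so $|\mathcal{P}(X)|\le 9$; and $[X]$ and $[U_1]$ both lie in $\mathcal{P}(X)$ and are distinct, since the two Mori structures have bases of different dimension, so $|\mathcal{P}(X)|\ge 2$. What remains is to prove that $U_1,\dots,U_8$ lie in pairwise distinct square-equivalence classes for a general $X$. Suppose $\chi\colon U_i\dashrightarrow U_j$ is a square equivalence, over an isomorphism $\omega\colon\P^1\to\P^1$. Then $g\coloneqq\rho_j^{-1}\circ\chi\circ\rho_i$ lies in $\mathrm{Bir}(X)=\Aut(X)$ by 2), so $g$ permutes the singular points $q_1,\dots,q_8$ of $X$. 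The key point is that $\chi$, being an isomorphism on the generic fibre of $U_i/\P^1$, cannot contract any divisor of $U_i$ that dominates $\P^1$. Now $\rho_i^{-1}$ contracts the divisor $\psi_i(E_i)\subset U_i$ to the point $q_i$, and $\psi_i(E_i)$ dominates $\P^1$ (a general fibre of $f_i$ meets it in a conic, by the mobility of the linear system of conics cut out on $E_i$); meanwhile $\rho_j$ is a local isomorphism near every singular point of $X$ other than $q_j$, while near $q_j$ its inverse contracts $\psi_j(E_j)$. Thus, if $g(q_i)\ne q_j$, the composite $\chi=\rho_j\circ g\circ\rho_i^{-1}$ contracts the horizontal divisor $\psi_i(E_i)$ to a point, which is impossible; therefore $g(q_i)=q_j$.

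So it suffices to show that for a general $X$ no automorphism of $X$ carries one singular point to another. Every automorphism of $X$ lifts to the centraliser of $\sigma$ in $\Aut(V)$, and for a general $V$ with the presentation of \cite[Section~6.1.6]{bayle} this centraliser is $\langle\sigma,\nu\rangle\cong(\mathbb{Z}/2)^2$, where $\nu$ is the covering involution of $V\to Q$; hence $\Aut(X)\cong\mathbb{Z}/2$, generated by the class of $\nu$. Since $\nu$ negates only the coordinate $x_5$, it fixes each of the $\sigma$-fixed points $p_k=V\cap\{x_3=x_4=x_5=0\}$, and therefore fixes every $q_k=\pi(p_k)$. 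Consequently no nontrivial automorphism of $X$ permutes the singular points, which forces $i=j$ above and yields $|\mathcal{P}(X)|=9$ for general $X$. The main obstacle is exactly this circle of ideas: establishing cleanly that a square equivalence among the $U_i$'s descends to a biregular automorphism of $X$ which identifies $q_i$ with $q_j$ — i.e. tracking the horizontal divisor $\psi_i(E_i)\subset U_i$ under $\chi$ — together with the determination of $\Aut(V)$, and hence of $\Aut(X)$, for a general member of this deformation family.
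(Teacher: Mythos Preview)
Your treatment of 2), 3), 4) and of the bounds $2\le|\mathcal{P}(X)|\le 9$ in 1) is correct and matches the paper's (which dismisses these as ``obvious''). Your horizontal-divisor argument showing that a square equivalence $\chi\colon U_i\dashrightarrow U_j$ forces the induced automorphism $g\in\Aut(X)$ to send $q_i$ to $q_j$ is also correct, and in fact more explicit than the paper, which simply asserts that the induced $h\in\Aut(X)$ ``must permute the points $q_1$ and $q_2$''.

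The genuine divergence is in the last step: showing that for general $X$ no automorphism permutes the $q_k$ nontrivially. You propose to compute the centraliser of $\sigma$ in $\Aut(V)$ directly and then descend, and you rightly flag this as the main obstacle --- the assertion that this centraliser equals $\langle\sigma,\nu\rangle$ for a general $V$ in this family is plausible but is not something you can read off from \cite[Section~6.1.6]{bayle}, and proving it from scratch is nontrivial. The paper sidesteps this entirely. It observes that any $h\in\Aut(X)$ acts linearly on $H^0(X,\O_X(H))\cong\mathbb{C}^3$, hence on $\P^2$ via an element of $\PGL(3,\mathbb{C})$, and this element must preserve the set $\{\phi_{|H|}(q_1),\dots,\phi_{|H|}(q_8)\}$. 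These eight points lie on the conic $\{P_2(x_0,x_1,x_2)=0\}$, so any such element of $\PGL(3,\mathbb{C})$ preserves that conic and restricts to an element of $\PGL(2,\mathbb{C})$ preserving eight marked points on $\P^1$. The paper then exhibits one explicit $V$ for which these eight points on $\P^1$ have trivial $\PGL(2,\mathbb{C})$-stabiliser, which settles the question for general $X$ by a finite computation. This is both more elementary and more robust than determining $\Aut(V)$: it reduces the obstacle you identified to checking a Zariski-open condition on a single example.
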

\begin{proof}
	Assertions 2), 3), 4) are obvious. Let us proof 1). 
	
	Clearly $2\leq|\mathcal{P}(X)|\leq 9$, so we need to proof that $|\mathcal{P}(X)|=9$ for a general $X$. Assume that there is a birational map $g\colon U_1\dashrightarrow U_2$. Then $g$ induces a birational automorphism $h\colon X \dashrightarrow X$, where $h=\rho_1\circ g\circ\rho_2^{-1}$. However, by the assertion 2), $h$ is biregular. Consequently, $h$ must permute the points $q_1$ and $q_2$. Now let's consider the map determined by the linear system $|H|$:
	\[
		\phi_{|H|}\colon X \dashrightarrow \P^2=\P(H^0(X,\O_X(H))).
	\]
	
	Hence, $h$ induces a biregular action on $\P^2=\P(H^0(X,\O_X(H)))$. Now we notice that for a general $X$ the images of $q_i$ on $\P^2$ cannot be non-trivially permuted by any element of $\mathrm{PGL}(3,\mathbb{C})$. Hence, $|\mathcal{P}(X)|=9$ for a general $X$. Example \ref{exam} finishes the proof.
\end{proof}

\begin{example}\label{exam}
Let $x_0,\ldots$, $x_4, x_5$ be homogeneous coordinates on $\P(1^5,2)$, where $x_5$ is the coordinate of weight $2$. Let $V\subset \P(1^5,2)$ be defined by the following equations:
	\[
		\begin{cases}
			&x_0x_1-x_2^2+x_3x_4=0,\\
			&x_5^2+x_0x_1x_3^2+x_2^2x_4^2+
			2x_0^3x_1 - 4 x_0 x_1^3 + 30 x_1^4 - 42 x_0^3 x_2 + 5 x_0^2 x_1 x_2 - 2 x_0 x_1^2 x_2 + 79 x_1^3 x_2 \\
 			&+363 x_0^2 x_2^2 + x_0 x_1 x_2^2 - 803 x_1^2 x_2^2 - 890 x_0 x_2^3 + 850 x_1 x_2^3 + 411 x_2^4+x_3^4+x_4^4=0.
		\end{cases}
	\]
		Let involution $\sigma\colon V\to V$ be given by
	\[
		\sigma (x_0:x_1:x_2:x_3:x_4:x_5)=(x_0:x_1:x_2:-x_3:-x_4:-x_5).
	\]	
	Then one can see that the $\sigma$-invariant points are the following:
	\begin{align*}
		&p_1=(1: 0: 0:0:0:0);\\
		&p_2=(1: 1:1:0:0:0);\\
		&p_3=(1: 1:-1:0:0:0);\\
		&p_4=(4: 1:2:0:0:0);\\
		&p_5=(1: 9:3:0:0:0);\\
		&p_6=(9: 4:6:0:0:0);\\
		&p_7=(25: 1:5:0:0:0);\\
		&p_8=(1: 49:-7:0:0:0).
	\end{align*}
	Let $r_i=(\phi_{|H|}\circ\pi)(p_i)$. Then $\Sigma =\{r_1,\ldots, r_8\}= Y_1\cap Y_2$, where 
	\begin{align*}
	&Y_1=\{x_0x_1-x_2^2=0\};\\
	&Y_2=\{2x_0^3x_1 - 4 x_0 x_1^3 + 30 x_1^4 - 42 x_0^3 x_2 + 5 x_0^2 x_1 x_2 - 2 x_0 x_1^2 x_2 + 79 x_1^3 x_2\\
	&+363 x_0^2 x_2^2 + x_0 x_1 x_2^2 - 803 x_1^2 x_2^2 - 890 x_0 x_2^3 + 850 x_1 x_2^3 + 411 x_2^4=0\}.
	\end{align*}
	
	Let $M\colon \P^2 \to \P^2$ be an automorphism such that $M(\Sigma)=\Sigma$. Obviously  $M(Y_1)=Y_1$, so one can consider $r_1,\ldots, r_8$ as points on $\P^1$. Now direct computations show that there are no automorphisms of $\P^1$ respecting this set of points.
		\end{example}
		
In fact, for a special $X$ we may have $|\mathcal{P}(X)|=2$. Consider the following example.

\begin{example}\label{exam1}
	Let $V$ be defined by the following equations:
	\[
		\begin{cases}
			-2ix_0x_1+x_2^2+x_3x_4=0,\\
			x_5^2+x_0x_1x_3^2+x_2^2x_4^2+x_0^4+x_1^4+x_2^4+x_3^4+x_4^4=0,
		\end{cases}
	\]
	and the involution $\sigma$ is the same.

	Then one can see that the $\sigma$-invariant points are the following:
	\begin{align*}
			& p_1=((-1 - i)(2 + \sqrt{3})^{1/4}: (1 + i) (2 - \sqrt{3})^{1/4}: 2:0:0:0);\\
		&p_2=((1 - i)(2 + \sqrt{3})^{1/4}: (1 - i)(2 - \sqrt{3})^{1/4}:2:0:0:0);\\
		&p_3=((1 + i) (2 + \sqrt{3})^{1/4}: (-1 - i) (2 - \sqrt{3})^{1/4}:2:0:0:0);\\
		&p_4=((-1 + i) (2 +\sqrt{3})^{1/4}, (-1 + i) (2 - \sqrt{3})^{1/4}, 2:0:0:0);\\
		&p_5=((-1 - i) (2 - \sqrt{3})^{1/4}: (1 + i) (2 + \sqrt{3})^{1/4}:2:0:0:0);\\
		&p_6=((1 - i) (2 - \sqrt{3})^{1/4}: (1 - i) (2 + \sqrt{3})^{1/4}:2:0:0:0);\\
		&p_7=((-1 + i) (2 - \sqrt{3})^{1/4}: (-1 + i) (2 + \sqrt{3})^{1/4}:2:0:0:0);\\
		&p_8=((1 + i) (2 - \sqrt{3})^{1/4}: (-1 - i) (2 + \sqrt{3})^{1/4}:2:0:0:0).
	\end{align*}
	Consider the following morphisms:
	\[
		\alpha\colon \P(1^5,2)\to\P(1^5,2)
	\]
	given by
	\[
		\alpha(x_0:x_1:x_2:x_3:x_4:x_5)=(ix_0:-ix_1:x_2:x_3:x_4:x_5),
	\]
	and 
	\[
		\beta\colon \P(1^5,2)\to\P(1^5,2)
	\]
	given by
	\[
		\beta(x_0:x_1:x_2:x_3:x_4:x_5)= (x_1:x_0:x_2:x_3:x_4:x_5).
	\]
	Then $\alpha$ and $\beta$ induce automorphisms of $V$, which we will still denote as $\alpha$ and $\beta$. Denote as $\Omega=\{p_1,\ldots, p_8\}$. Representing the point $p_i$ by number $i$, we notice that $\alpha$ acts on the set of eight elements $\Omega$ as $(1234)(5678)\in \mathfrak{S}_8$, and $\beta$ acts on $\Omega$ as $(18)(26)(35)(47)\in \mathfrak{S}_8$. Therefore, the subgroup $\langle \alpha, \beta\rangle \subset \Aut(V)$ permutes transitively the set $\Omega$. Thus, all $U_i/\P^1$ are square equivalent and hence $|\mathcal{P}(X)|=2$. 
\end{example}

\bibliography{References}
\bibliographystyle{alpha}	

\end{document}